\numberwithin{equation}{section}
\newtheorem{thm}{}[section]
\newtheorem{theorem}[thm]{Theorem}
\newtheorem{lemma}[thm]{Lemma}
\newtheorem{prop}[thm]{Proposition}
\theoremstyle{definition}
\newtheorem{example}[thm]{Example}
\newtheorem{defn}[thm]{Definition}
\newtheorem{question}[thm]{Question}
\newtheorem{rem}[thm]{Remark}
\newtheorem{thminside}{}
\newtheorem{claim}[thminside]{Claim}
\newcommand{\abs}[1]{\left\lvert#1\right\rvert}
\newcommand{\norm}[1]{\left\lVert#1\right\rVert}
\newcommand{\enbrace}[1]{\left\lbrace#1\right\rbrace}
\newcommand{\enbrak}[1]{\left[#1\right]}
\newcommand{\enpar}[1]{\left(#1\right)}
\newcommand{\lc}{\ensuremath{\bm{cv}}}
\newcommand{\mz}{\ensuremath{\bm{d}}}
\newcommand{\lmz}{\ensuremath{\bm{c}}}
\newcommand{\umz}{\ensuremath{\bm{u}}}
\newcommand{\smz}{\ensuremath{\bm{sd}}}
\newcommand{\slmz}{\ensuremath{\bm{sc}}}
\newcommand{\sumz}{\ensuremath{\bm{su}}}
\newcommand{\hsd}{\ensuremath{\bm{\chi}}}
\newcommand{\krt}{\ensuremath{\bm{\alpha}}}
\newcommand{\itc}{\ensuremath{\bm{\beta}}}
\newcommand{\FF}{\ensuremath{\mathbb{F}}}
\newcommand{\VV}{\ensuremath{\mathbb{V}}}
\newcommand{\XX}{\ensuremath{\mathbb{X}}}
\newcommand{\YY}{\ensuremath{\mathbb{Y}}}
\newcommand{\EB}{\ensuremath{\mathcal{E}}}
\newcommand{\XB}{\ensuremath{\mathcal{X}}}
\newcommand{\YB}{\ensuremath{\mathcal{Y}}}
\newcommand{\Mt}{\mathcal{M}}
\newcommand{\Nt}{\mathcal{N}}
\newcommand{\Ft}{\mathcal{F}}
\newcommand{\Gt}{\mathcal{G}}
\newcommand{\It}{\mathcal{I}}
\newcommand{\ee}{\ensuremath{\bm{e}}}
\newcommand{\Id}{\ensuremath{\mathrm{Id}}}
\DeclareMathOperator{\supp}{supp}
\DeclareMathOperator{\Lip}{Lip}
\DeclareMathOperator{\diam}{diam}
\DeclareMathOperator{\Span}{span}
\DeclareMathOperator{\suppt}{suppt}
\newcommand{\Nat}{\mathbb{N}}
\newcommand{\ZZ}{\mathbb{Z}}
\newcommand{\Rea}{\mathbb{R}}
\newcommand{\F}{\mathcal{F}}
\author[F. Albiac]{Fernando Albiac}
\address{Department of Mathematics, Statistics and Computer Sciences, and Ina\-Mat$^{2}$\\ Universidad P\'ublica de Navarra\\
Pamplona 31006\\ Spain}
\email{fernando.albiac@unavarra.es}
\author[J. L. Ansorena]{Jos\'e L. Ansorena}
\address{Department of Mathematics and Computer Sciences\\
Universidad de La Rioja\\
Logro\~no 26004\\ Spain}
\email{joseluis.ansorena@unirioja.es}
\author[J. B\'{\i}ma]{Jan B\'{\i}ma}
\address{Faculty of Mathematics and Physics, Department of Mathematical Analysis\\
Charles University\\
186 75 Praha 8\\
Czech Republic}
\email{jan.bima@mff.cuni.cz}
\author[M. C\'uth]{Marek C\'uth}
\address{Faculty of Mathematics and Physics, Department of Mathematical Analysis\\
Charles University\\
186 75 Praha 8\\
Czech Republic}
\email{cuth@karlin.mff.cuni.cz}
\subjclass[2020]{46B03 (Primary), 46B07, 46B10, 46B15, 46B20, 46B25, 46B42, 46B08, 46E30, 46E40 (Secondary)}
\keywords{Schur $p$-property; strong Schur $p$-property; Lispchitz free $p$-space, compact reduction}
\begin{document}
\title[On the Schur $p$-Property and Compact Reduction in  $\F_p(\Mt)$]{Lipschitz free $\bm p$-spaces for $\bm{0<p<1}$\\ in the light of the Schur $\bm{p}$-property\\ and the compact reduction}
\begin{abstract}
The geometric analysis of non-locally convex quasi-Banach spaces presents rich and nuanced challenges. In this paper, we introduce the Schur $p$-property and the strong Schur $p$-property for $0 < p \leq 1$, providing new tools to deepen the understanding of these spaces, and the Lipschitz free $p$-spaces in particular. Moreover, by developing an adapted version of the compact reduction principle, we prove that Lipschitz free $p$-spaces over discrete metric spaces possess the approximation property, thereby answering positively a question raised by Albiac et al. in \cite{AACD21}.
\end{abstract}
\thanks{F. Albiac and J. L. Ansorena acknowledge the support of the Spanish Ministry for Science and Innovation under Grant PID2022-138342NB-I00 for \emph{Functional Analysis Techniques in Approximation Theory and Applications}. J. Bíma and M. Cúth acknowledge the support of GAČR project 23-04776S. J. Bíma further acknowledges the support of the Charles University project GA UK No. 138123 and completed this work while employed at MSD Czech Republic, Prague.}
\maketitle
\section{Introduction}\noindent
Lipschitz free $p$-spaces over metric or quasi-metric spaces $\Mt$, here denoted $\F_p(\Mt)$, were introduced in \cite{AlbiacKalton2009} with the aim of constructing, for each $0 < p < 1$, examples of two \emph{separable} $p$-Banach spaces that are Lipschitz-isomorphic but not linearly isomorphic. Whether this is possible or not for $p=1$ remains, as of today, the single most important open problem in the theory of non-linear classification of Banach spaces.

Lipschitz free spaces over metric spaces have attracted the attention of specialists in nonlinear geometric analysis and classical Banach space theory alike since the publication of the book \cite{WeaverBook2018} by Weaver, and the seminal paper \cite{GodefroyKalton2003} by Godefroy and Kalton. This upsurge of interest has contributed to a very fast development of the theory and a better understanding of this class of spaces. Despite the fact that many simple-to-formulate questions remain unanswered, this area is currently a very active focus of research in functional and metric analysis as well as their interactions.

In contrast, the study of Lipschitz free $p$-spaces for $0<p<1$ has lagged far behind compared to the case when $p = 1$, despite the fact that the spaces $\F_{p}(\Mt)$ have proven to be of substantial utility. The historic neglect in the advancement of the non-locally convex theory in any front has its roots way back at the dawn of functional analysis, and it is justified by the difficulty in making headway when some of the main tools the analyst relies on in their everyday life (duality, differentiation, integration) are not available when the local convexity is lifted. We contend here that the shortage of techniques does not make $p$-Banach spaces for $0<p<1$ less interesting, but rather more intricate. This intricacy makes their idiosyncrasies well worth exploring if we wish to gain a deeper appreciation of the theory as a whole.

With that aim in mind, in this paper we bring into the non-locally convex setting two techniques that have been recently and successfully applied in the context of Lipschitz free spaces to investigate their structure, namely the Schur property and the compact reduction.

In functional analysis, the Schur property serves to identify those Banach spaces $\XX$ where the weak and the norm topologies behave similarly on sequences. The first infinite-dimensional Banach space where this property was uncovered was $\ell_1$, as Schur himself showed in 1921. Banach spaces with the Schur property are weakly sequentially complete, but the converse does not hold in general. Indeed, $L_1$ and the reflexive spaces are examples of weakly sequentially complete spaces which do not enjoy the Schur property.

As a feature that permits to tell apart spaces which are not isomorphic, the Schur property has many applications in classical Banach space theory. For instance, $L_{1}$ cannot be isomorphic to $\ell_{1}$ because the former contains the non-Schur space $\ell_2$ complementably, and the Schur property is inherited by subspaces. Or one could simply appeal to the fact that $\ell_1$ and $L_1$ have different ways to interact with their respective duals as far as the convergence of sequences is concerned.

The interplay between a Banach space and its dual is lost in the absence of local convexity, as it happens in the spaces $L_p$ for $0<p<1$ or in any other quasi-Banach spaces with zero dual for that matter. Even when the dual space of a quasi-Banach space is huge (like in $\ell_p$ for $0<p<1$ whose dual is $\ell_\infty$), the information one could get by studying the connections between the norm and the week topologies is gone. Still, it makes sense to see if it would be possible to rescue some vestiges of the Schur property through some equivalent characterizations in the context of quasi-Banach spaces with an eye to its applications to the isomorphic theory. To that end, in Section~\ref{sec:schur} we introduce the Schur $p$-property of a quasi-Banach space $\XX$. Roughly speaking, we will be interested in identifying $\ell_p$-like behaviour inside a quasi-Banach space and, more specifically, in knowing when we can extract a sequence which is equivalent to the canonical $\ell_{p}$-basis from any uniformly separated sequence in the space. When $p=1$ this is a reformulation of the standard Schur property for Banach spaces, courtesy of Rosenthal's $\ell_1$ theorem.

Schur's property is a rare find in ``classical'' Banach space theory. At the same time, since $\Rea$ embeds in any Banach space $\XX$, the Lipschitz free space $\F(\XX)$ contains $L_1$ isomorphically, so it cannot be a Schur space. However, Lipschitz free spaces over metric spaces provide many examples of Banach spaces that enjoy this property. In \cite{Kalton2004}, Kalton proved that if $(\Mt, d)$ is a metric space and $\omega$ is a nontrivial gauge then the Lipschitz free space $\F(\Mt_\omega)$ over the metric space $M_{\omega}=(M, \omega\circ d)$ has the Schur property. In the same paper, he also proved that if $\Mt$ is uniformly discrete and uniformly separated then $\F(\Mt)$ is a Schur space with the Radon-Nikodym property and the approximation property. The relevance of the Schur property in the study of the geometry of Lipschitz free spaces over metric spaces has gained relevance in the last few years and has been exploited by several authors. For instance, in \cite{HLP2016}  H\'ajek et al.\@ proved that the Lipschitz free space over a proper countable metric space has the Schur property. In turn, Petitjean investigated the Schur property in the class of Lipschitz free spaces over metric spaces originating from $p$-Banach spaces when $0<p<1$. To be precise, if $(\XX, \norm{\cdot})$ is a $p$-Banach space with a finite-dimensional decomposition and $\Mt_{p}$ denotes the metric space $(\XX, d_{p})$, where $d_{p}(x,y)=\norm{ x-y}^{p}$ ($x$, $y\in \XX)$, then the Lipschitz free space $\F(\Mt_p)$ over $\Mt_p$ has the Schur property for all $0<p<1$ \cite{Petitjean2017}; in particular, $\F(\ell_p, \norm{\cdot}_p^p)$ has the Schur property for all $0<p<1$. A final characterization was found in \cite{AGPP22} by Aliaga et al.\@ who proved that $\F(\Mt)$ has the Schur property if and only if the completion of $\Mt$ is purely $1$-unrectifiable. Thus the time was ripe to study this feature in Lipschitz free $p$-spaces over quasi-metric spaces.

The Schur $p$-property is paired with and compared to the also new strong Schur $p$-property, which strengthens it quantitatively. Once again, this demonstrates a certain consistency in the theory with the locally convex case when $p = 1$. The stability of the Schur property by infinite $\ell_p$-sums is established in Section~\ref{stability}. This is relevant also seen from the angle that embedding a space into an $\ell_p$-direct sum is another way to exhibit $\ell_{p}$-like behaviour. We move to Section~\ref{free} with some early applications to the understanding of the geometry of the Lipschitz free $p$-spaces $\F_{p}(\Mt)$ over metric or quasi-metric spaces $\Mt$ for $0<p\le 1$ through the lens of the Schur $p$-property. In Section~\ref{cpctReduction} we incorporate (to some extent) the, so called, compact reduction method to the setting of Lipschitz free $p$-spaces for $p<1$. Compact reduction in the context of Lipschitz free spaces refers to a technique used to study the structure of these spaces by relating them to compact subsets of the underlying metric spaces. It was introduced and successfully applied by Aliaga et al. in \cite{ANPP} to show, for instance, that if $\Mt$ is a scattered complete metric space then $\F(\Mt)$ has the Schur property.  The full force of this technique is attained when combined with duality, which imposes certain limitations for its usefulness to non-locally convex $p$-Banach spaces.  Here we develop a weak version of a compact reduction principle and show that if $\Mt$ is a discrete metric space then $\F_p(\Mt)$ has the approximation property, thus answering in the positive \cite[Question 6.3]{AACD21} (the case of $p=1$ had been stablished in \cite{ANPP}).
\section{The Schur \texorpdfstring{$p$}{}-property and the strong Schur \texorpdfstring{$p$}{}-property}\label{sec:schur}\noindent
A Banach space $\XX$ (over the real or complex field $\FF$) has the \emph{Schur property} if whenever a sequence in $\XX$ converges weakly to $0$ then is norm-convergent to $0$. If we consider this property in the more general class of quasi-Banach spaces, we obtain that  any quasi-Banach space with the Schur property is locally convex. So, in order for the Schur property to provide some meaningful information on the structure of a quasi-Banach space $\XX$, the space must be locally convex.

To adapt the Schur property to the geometry of non-locally convex spaces we appeal to a ready consequence Rosenthal's $\ell_1$-theorem, which tells us that every bounded sequence in a Banach space either has a weakly Cauchy subsequence or contains a subsequence equivalent to the canonical $\ell_1$-basis.

The following proposition is well-known but we include it here for self-reference. Recall that a family $\XB=(x_j)_{j\in J}$ in a metric (or quasi-metric) space $(\Mt,d)$ is said to be \emph{uniformly separated} if
\[
\delta(\XB)\coloneqq\inf_{j\not=k} d(x_j,x_k)>0.
\]
If $\delta\in(0,\infty)$ is such that $\delta(\XB)\ge\delta$, we say that $\XB$ is $\delta$-separated. A set $A\subset\XX$ is said to be $\delta$-separated if the obvious family $(x)_{x\in A}$ is. A set (or a family) in a quasi-Banach space is said to be \emph{semi-normalized} if the quasi-norms of its vectors are bounded away from zero and infinity. If $A$ is contained in the unit sphere $S_\XX$ of $\XX$, we say that it is \emph{normalized}.

\begin{prop}\label{prop:SchurChar}
A Banach space $\XX$ has the Schur property if and only if every uniformly separated bounded infinite subset of $\XX$ contains a sequence equivalent to the canonical $\ell_1$-basis.
\end{prop}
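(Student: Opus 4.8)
The plan is to prove the two implications separately, using Rosenthal's $\ell_1$-theorem as the engine for the forward direction and a standard separation-extraction argument for the converse.

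For the forward implication, assume $\XX$ has the Schur property and let $A\subseteq\XX$ be uniformly separated, bounded and infinite, say $\delta$-separated with $\delta=\delta(A)>0$. First I would pick a sequence $(x_n)_n$ of distinct points of $A$; it is bounded, so Rosenthal's $\ell_1$-theorem offers a dichotomy: either some subsequence is equivalent to the canonical $\ell_1$-basis---in which case we are done---or some subsequence $(x_{n_k})_k$ is weakly Cauchy. The crux is to rule out the second alternative. If $(x_{n_k})_k$ is weakly Cauchy, then the consecutive differences $y_k\coloneqq x_{n_{k+1}}-x_{n_k}$ satisfy $f(y_k)\to 0$ for every $f\in\XX^*$, i.e.\@ $(y_k)_k$ is weakly null. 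The Schur property then forces $\norm{y_k}\to 0$, which is incompatible with $\norm{y_k}\ge\delta$ coming from the separation of $A$. Hence only the first alternative survives.

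For the converse, I would argue by contraposition: suppose $\XX$ fails the Schur property, so there is a weakly null sequence $(x_n)_n$ with $\inf_n\norm{x_n}>0$; passing to a subsequence we may assume $\norm{x_n}\ge\varepsilon$ for some $\varepsilon>0$. No subsequence of $(x_n)_n$ can converge in norm, since its only possible weak, hence norm, limit would be $0$, contradicting $\norm{x_n}\ge\varepsilon$. Consequently the set $\{x_n:n\in\Nat\}$ is not relatively compact, hence not totally bounded, and a greedy selection yields some $\delta>0$ and a subsequence $(x_{n_j})_j$ that is $\delta$-separated. This produces a uniformly separated, bounded, infinite subset of $\XX$. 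By hypothesis it contains a sequence $(y_k)_k$ equivalent to the $\ell_1$-basis. The final step is to observe that such a sequence cannot be weakly null: transporting the functional $(a_k)_k\mapsto\sum_k a_k\in\ell_\infty=\ell_1^*$ through the isomorphism from $\ell_1$ onto $\closedSpan\{y_k:k\in\Nat\}$ that sends $e_k$ to $y_k$, and extending it by Hahn--Banach, gives $f\in\XX^*$ with $f(y_k)=1$ for all $k$. But $(y_k)_k$ is a subsequence of a weakly null sequence, so $f(y_k)\to 0$, a contradiction.

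The routine compactness and extraction steps aside, the main obstacle is conceptual rather than computational: in each direction one must correctly exploit that the two surviving possibilities from Rosenthal's dichotomy are mutually exclusive under the Schur property, and that $\ell_1$-behaviour is the precise antithesis of weak nullity. I expect the delicate point to be the converse's extraction of a genuinely $\delta$-separated subsequence, where one must ensure the greedy procedure never terminates, which is exactly guaranteed by the failure of total boundedness.
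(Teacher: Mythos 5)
Your proof is correct and follows essentially the same route as the paper's: Rosenthal's dichotomy plus the weak nullity of consecutive differences of a weakly Cauchy sequence in one direction, and extraction of a uniformly separated subsequence from a semi-normalized weakly null sequence (together with the fact that the canonical $\ell_1$-basis is not weakly null) in the other. The only cosmetic difference is that you obtain the separated subsequence via failure of total boundedness, whereas the paper uses weak lower semicontinuity of the norm and a diagonal argument; both are routine.
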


\begin{proof}
If $\XX$ fails to have the Schur property, then it contains is a semi-normalized weakly null sequence $\XB\coloneqq(x_n)_{n=1}^\infty$. By the Hahn--Banach theorem,
\[
\liminf_n \norm{x-x_n}\ge \norm{x}
\]
for all $x\in\XX$. Hence, by Cantor's diagonal method, $\XB$ has a uniformly separated subsequence $\YB$. Since the canonical $\ell_1$-basis is not weakly null, no subsequence of $\YB$ is equivalent to the canonical $\ell_1$-basis.

Suppose that there is a uniformly separated bounded sequence $\XB$ in $\XX$ with no subbasis equivalent to the canonical $\ell_1$-basis. Then, $\XB$ has a weakly Cauchy subsequence $(x_n)_{n=1}^\infty$. Since the sequence
\[
(x_{2n-1}-x_{2n})_{n=1}^\infty
\]
is semi-normalized and weakly null, we are done.
\end{proof}

We shall define a Schur $p$-property by replacing $\ell_1$ with $\ell_p$ in the characterization of the Schur property provided by \Cref{prop:SchurChar}.

\begin{defn}\label{def:pSchur}
We will say that a quasi-Banach space $\XX$ has the \emph{Schur $p$-property}, or that or $\XX$ is a \emph{$p$-Schur space}, $0<p\le 1$, if every bounded uniformly separated infinite subset of $\XX$ contains a sequence equivalent to the canonical basis of $\ell_p$.
\end{defn}

The Schur property is of qualitative nature. In contrast, the Schur $p$-property, $0<p\le 1$, involves notions that depend on the existence of inherent numerical magnitudes. Below we will define a quantitative Schur $p$-property by imposing suitable relations between those implicit constants. Let us first introduce some notation we will need.

The symbol $[I]^{<\omega}$ stands for the set of all finite subsets of a set $I$. Given a quasi-Banach space $\XX$ and a nonempty set $B\subset \XX$, we set
\[
\lmz_p(B)\coloneqq\inf\enbrace{\norm{\sum_{x\in F} a_x \, x}\colon F\in[B]^{<\omega}, \, (a_x)_{x\in F}\in S_{\ell_p(F)}},
\]
that is, $\lmz_p(B)$ is the optimal constant $c\in[0,\infty)$ such that
\[
c \enpar{\sum_{x\in F} \abs{a_x}^p}^{1/p} \le \norm{\sum_{x\in F} a_x \, x}
\]
for every eventually null family $(a_x)_{x\in B}$ in $\FF$. Similarly, with the convention that $1/\infty = 0$, we put
\[
\umz_p(B)\coloneqq\enpar{\sup\enbrace{\norm{\sum_{x\in F} a_x \, x}\colon F\in[B]^{<\omega}, \, a\in S_{\ell_p(F)}}}^{-1}.
\]
That is, $\umz_p(B)$ is the optimal $c\in [0,\infty)$ satisfying
\[
c \norm{\sum_{x\in F} a_x \, x} \le \enpar{\sum_{x\in F} \abs{a_x}^p}^{1/p}
\]
for every eventually null family $(a_x)_{x\in B}$ in $\FF$.

Then, we measure the distance from $B$ to the canonical $\ell_p$-basis by means of the quantity
\[
\mz_p(B)\coloneqq\min\enbrace{\lmz_p(B),\umz_p(B)}.
\]
Since
\begin{equation}\label{eq:TB}
\lmz_p(B)\le \inf \enbrace{ \norm{x} \colon x\in B}, \quad \umz_p(B)\le \frac{1}{\sup\enbrace{\norm{x}\colon x\in B}},
\end{equation}
it follows that $\mz_p(B)\le 1$.

Let $\enbrak{B}$ denote the closed linear span of $B$. Let $(\ee_x)_{x\in B}$ denote the unit vectors of $\FF^B$. Notice that $\lmz_p(B)>0$ (resp., $\umz_p(B)>0$) if and only if there is a bounded linear operator $T\colon \enbrak{B} \to \ell_p(B)$ (resp., $S\colon \ell_p(B)\to \enbrak{B}$) such that $T(x)=\ee_x$ (resp., $S(\ee_x)=x$) for all $x\in B$, in which case $\lmz_p(B) = \norm{T}^{-1}$ (resp., $\umz_p(B) =\norm{S}^{-1}$).

If $A\subset \XX$ is infinite, we put
\begin{align*}
\smz_p(A) &\coloneqq \sup\enbrace{ \mz_p(B) \colon B\subset A, \, \abs{B}=\infty},\\
\slmz_p(A) &\coloneqq \sup\enbrace{ \lmz_p(B) \colon B\subset A, \, \abs{B}=\infty},\\
\sumz_p(A) &\coloneqq \sup\enbrace{ \umz_p(B) \colon B\subset A, \, \abs{B}=\infty}.\\
\end{align*}

We define the \emph{diametral modulus} of a quasi-metric space $(\Mt,d)$ as
\[
\rho(\Mt)\coloneqq \sup \enbrace{\tfrac{d(x,z)}{\max\enbrace{d(x,y),d(y,z) }}\colon x,y,z\in\Mt, \, x\neq y\neq z}.
\]
Note that $\rho(\Mt)\in[1,\infty)$ is the optimal constant $C$ such that
\[
d(x,z)\le C \max\enbrace{d(x,y),d(y,z) }, \quad x\,,y\,,z\in \Mt.
\]
The diametral modulus $\rho(\XX)$ of a quasi-normed space $(\XX,\norm{\cdot})$ will be the diametral modulus of the associated quasi-metric on $\XX$ given by $(x,y)\mapsto \norm{x-y}$. In this case $\rho(\XX)\in[2,\infty)$ and it measures the diameter of the unit ball $B_\XX$ of $\XX$. It is also the optimal constant $C$ such that
\[
\norm{x+y} \le C\max\enbrace{\norm{x},\norm{y}}, \quad x,y \in\XX.
\]

Let $\XX$ be a quasi-Banach space. Let us record some elementary properties of the canonical projection
\[
\pi\colon\XX\setminus\{0\}\to S_\XX, \quad \pi(x)\coloneqq\frac{x}{\norm{x}},
\]
whose straightforward verification we omit.
\begin{lemma}\label{lem:Cuth} Let $\XX$ be a quasi-Banach space with diametral modulus $\rho$.
\begin{enumerate}[label=(\roman*),leftmargin=*,widest=iii]
\item\label{it:NormIneq} For all $x$, $y\in \XX\setminus\{0\}$ we have
\begin{equation*}
\norm{x-y} \le \rho \max\enbrace{\min\enbrace{ \norm{x}, \norm{y}}\norm{\pi(x)-\pi(y)}, \abs{ \norm{y}-\norm{x}}}.
\end{equation*}
In particular, if $A\subset \XX\setminus\{0\}$ is such that,
\[
\norm{x-y}>\rho \abs{ \norm{y}-\norm{x}}, \quad x,y\in A, \, x\not=y,
\]
then $\pi|_A$ is one-to-one.

\item\label{it:MZNormalized} Given $B\subset \XX\setminus\{0\}$ such that $\pi|_B$ is one-to-one,
\begin{align*}
\lmz_p(B)&\ge \lmz_p(\pi(B)) \inf\enbrace{\norm{x}\colon x\in B}, \\
\umz_p(B)&\ge \umz_p(\pi(B)) \frac{1}{\sup\enbrace{\norm{x}\colon x\in B}}.
\end{align*}
\end{enumerate}
\end{lemma}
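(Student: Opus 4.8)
The plan is to treat the two parts separately; both reduce to the defining inequality of the diametral modulus, namely $\norm{u+v}\le\rho\max\enbrace{\norm{u},\norm{v}}$ for $u,v\in\XX$. For \ref{it:NormIneq}, by symmetry I may assume $\norm{x}\le\norm{y}$, so that $\min\enbrace{\norm{x},\norm{y}}=\norm{x}$. Writing $x=\norm{x}\pi(x)$ and $y=\norm{y}\pi(y)$, the key is the telescoping identity
\[
x-y=\norm{x}\enpar{\pi(x)-\pi(y)}+\enpar{\norm{x}-\norm{y}}\pi(y).
\]
Applying the diametral modulus inequality to the two summands and using $\norm{\pi(y)}=1$, I obtain $\norm{x-y}\le\rho\max\enbrace{\norm{x}\norm{\pi(x)-\pi(y)},\abs{\norm{x}-\norm{y}}}$, which is the asserted bound once $\norm{x}$ is rewritten as $\min\enbrace{\norm{x},\norm{y}}$. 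The one place requiring a little care is the choice of decomposition: grouping the terms so that the \emph{smaller} norm multiplies $\pi(x)-\pi(y)$ is exactly what produces the $\min$. The ``in particular'' assertion then follows by contraposition: if $\pi(x)=\pi(y)$ for some $x\neq y$ in $A$, the first entry of the maximum vanishes and the inequality collapses to $\norm{x-y}\le\rho\abs{\norm{y}-\norm{x}}$, contradicting the standing hypothesis on $A$; hence $\pi|_A$ is one-to-one.

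For \ref{it:MZNormalized}, the idea is to reparametrize coefficients. Fix $F\in[B]^{<\omega}$ and a family $(a_x)_{x\in F}$ in $\FF$, and set $c_{\pi(x)}\coloneqq a_x\norm{x}$. Since $\pi|_B$ is one-to-one, the assignment $x\mapsto\pi(x)$ maps $F$ bijectively onto a finite subset of $\pi(B)$, so that $\sum_{x\in F}a_x\,x=\sum_{x\in F}c_{\pi(x)}\,\pi(x)$ and $\sum_{x\in F}\abs{c_{\pi(x)}}^p=\sum_{x\in F}\abs{a_x}^p\norm{x}^p$. For the first estimate I apply the defining inequality of $\lmz_p(\pi(B))$ to the combination on the right and bound $\norm{x}\ge\inf\enbrace{\norm{x}\colon x\in B}$ inside the resulting $\ell_p$-sum; passing to the infimum over $(a_x)\in S_{\ell_p(F)}$ and over $F$ (where the $\ell_p$-norm of the coefficients equals $1$) yields $\lmz_p(B)\ge\lmz_p(\pi(B))\inf\enbrace{\norm{x}\colon x\in B}$. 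For the second estimate I apply instead the defining inequality of $\umz_p(\pi(B))$ and bound $\norm{x}\le\sup\enbrace{\norm{x}\colon x\in B}$ from above, which after rearranging gives the claimed lower bound for $\umz_p(B)$.

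Neither part hides a genuine obstacle — this is why the authors are content to omit the verification. The only point to keep in view is that \ref{it:MZNormalized} truly uses the injectivity of $\pi|_B$: it is what guarantees that the reindexing $F\leftrightarrow\pi(F)$ is a bijection of finite index sets, so that no two coefficients $a_x$ collapse onto the same vector $\pi(x)$ and the two $\ell_p$-sums match term by term. Without this hypothesis the scaled coefficients $c_{\pi(x)}$ would have to be summed over repeated vectors, and the clean comparison between $\lmz_p(B),\umz_p(B)$ and their normalized counterparts would break down.
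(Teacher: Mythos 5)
Your proof is correct; the paper explicitly omits this verification as straightforward, and your argument (the telescoping decomposition $x-y=\norm{x}(\pi(x)-\pi(y))+(\norm{x}-\norm{y})\pi(y)$ for \ref{it:NormIneq} and the coefficient reparametrization $c_{\pi(x)}=a_x\norm{x}$ for \ref{it:MZNormalized}) is exactly the intended one. Your closing remark on why injectivity of $\pi|_B$ is genuinely needed in \ref{it:MZNormalized} is also accurate.
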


\begin{lemma}
Let $\XX$ be a quasi-Banach space and $p\in(0,1]$. The following are equivalent.
\begin{itemize}[leftmargin=*]
\item $\XX$ has the Schur $p$-property.
\item $\smz_p(A)>0$ for every $A\subset\XX$ infinite, bounded, and uniformly separated.
\item $\smz_p(A)>0$ for every $A\subset S_\XX$ infinite and uniformly separated.
\item $\slmz_p(A)>0$ and $\sumz_p(A)>0$ for every $A\subset\XX$ infinite, bounded, and uniformly separated.
\item $\slmz_p(A)>0$ and $\sumz_p(A)>0$ for every $A\subset S_\XX$ infinite and uniformly separated.
\end{itemize}
\end{lemma}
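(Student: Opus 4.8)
The plan is to close the cycle $(2)\Rightarrow(4)\Rightarrow(5)\Rightarrow(3)\Rightarrow(2)$ and to adjoin $(1)\Leftrightarrow(2)$, leaning on two structural features of $\lmz_p$ and $\umz_p$: their monotonicity under passing to subsets and their compatibility with the normalization $\pi$.

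First I would record the elementary observation that an infinite set $B$ spans an isomorphic copy of the $\ell_p$-basis precisely when $\mz_p(B)>0$, the lower $\ell_p$-estimate being $\lmz_p(B)>0$ and the upper one $\umz_p(B)>0$. Since a supremum of nonnegative numbers is positive exactly when one of them is, $\smz_p(A)>0$ holds if and only if $A$ contains an infinite subset equivalent to the canonical $\ell_p$-basis; letting $A$ range over all bounded uniformly separated infinite sets, this is verbatim the Schur $p$-property, which settles $(1)\Leftrightarrow(2)$. The implication $(2)\Rightarrow(4)$ is then immediate, because $\mz_p(B)=\min\{\lmz_p(B),\umz_p(B)\}$ forces both $\slmz_p(A)$ and $\sumz_p(A)$ to be positive as soon as $\smz_p(A)$ is, while $(4)\Rightarrow(5)$ is the trivial restriction from bounded sets to subsets of $S_\XX$.

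Next I would note that shrinking $B$ only removes constraints from the infimum defining $\lmz_p(B)$ and from the supremum defining $\umz_p(B)$, so both quantities are non-decreasing under passing to infinite subsets. This monotonicity drives the combining step $(5)\Rightarrow(3)$: given $A\subset S_\XX$ infinite and uniformly separated, hypothesis $(5)$ applied to $A$ furnishes an infinite $B_1\subset A$ with $\lmz_p(B_1)>0$; as $B_1$ is again an infinite uniformly separated subset of $S_\XX$, applying $(5)$ a second time---now to $B_1$---furnishes an infinite $B_2\subset B_1$ with $\umz_p(B_2)>0$, and monotonicity upgrades the lower bound to $\lmz_p(B_2)\ge\lmz_p(B_1)>0$. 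Hence $\mz_p(B_2)>0$ and $\smz_p(A)>0$. The self-referential use of the hypothesis---first for the lower estimate on $A$, then for the upper estimate on the subset $B_1$---is exactly what fuses the two separate witnesses into a single $B_2$, and the identical argument run over arbitrary bounded uniformly separated sets would give $(4)\Rightarrow(2)$ directly.

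The remaining and most delicate link is the normalization $(3)\Rightarrow(2)$, where \Cref{lem:Cuth} carries the load. Starting from a bounded uniformly separated infinite $A$, I would discard the at most one point of norm below $\delta(A)/\rho$---two such points would violate uniform separation through $\norm{x-y}\le\rho\max\{\norm{x},\norm{y}\}$---so that $A$ becomes semi-normalized, say $m\le\norm{x}\le M$. Passing to a subsequence along which the norms converge makes the gaps $\abs{\norm{x_n}-\norm{x_m}}$ eventually smaller than $\delta(A)/\rho$; inserting this into the inequality of \Cref{lem:Cuth}(i) then forces $\norm{\pi(x_n)-\pi(x_m)}\ge\delta(A)/(\rho M)$, so the images $\pi(x_n)$ are uniformly separated on $S_\XX$ and $\pi$ is injective on the resulting tail $B$. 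Applying $(3)$ to $\pi(B)$ yields an infinite $C'\subset\pi(B)$ with $\mz_p(C')>0$, and pulling back to $B'=(\pi|_B)^{-1}(C')$ the estimates of \Cref{lem:Cuth}(ii) transfer positivity, $\lmz_p(B')\ge m\,\lmz_p(C')>0$ and $\umz_p(B')\ge\umz_p(C')/M>0$, whence $\smz_p(A)\ge\mz_p(B')>0$. I expect this extraction---simultaneously arranging convergent norms, separated directions, and injectivity of $\pi$ so that \Cref{lem:Cuth} applies---to be the main technical obstacle, while the combining step, though conceptually the subtle one, is cheap once monotonicity is available.
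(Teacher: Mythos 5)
Your argument is correct and follows essentially the same route as the paper's (very terse) proof: both hinge on Lemma~\ref{lem:Cuth} to pass between a bounded uniformly separated set and its uniformly separated normalization $\pi(A_0)$, together with the fact that subsets of $\delta$-separated sets are $\delta$-separated. Your only additions are details the paper leaves implicit — the monotonicity of $\lmz_p$ and $\umz_p$ under passing to subsets and the resulting double-application trick that fuses the two witnesses in $(5)\Rightarrow(3)$ — and these are exactly the right way to fill that gap.
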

\begin{proof}

Given $A\subset \XX$ infinite, bounded and uniformly separated, and $\varepsilon>0$, there are a closed interval $I\subset (0,\infty)$ of length $\varepsilon$ and $A_0\subset A$ infinite such that $\norm{x}\in I$ for all $x\in A_0$. Choosing $\varepsilon$ small enough, we infer from Lemma~\ref{lem:Cuth}\ref{it:NormIneq} that $\pi|_{A_0}$ is one-to-one and that $\pi(A_0)$ is uniformly separated. Since subsets of $\delta$-separated sets are $\delta$-separated, the equivalences follow from Lemma~\ref{lem:Cuth}\ref{it:MZNormalized}.
\end{proof}

We now introduce the strong Schur $p$-property for $0 < p \leq 1$, which, as we shall see, is strictly stronger than the Schur $p$-property. For the origin of the strong Schur property in the context of Banach spaces, we refer the reader to \cite[Section 2]{Ha81}.

\begin{defn}\label{df:SSchur}
Let $\XX$ be a quasi-Banach space, $p\in (0,1]$, and $K>0$. We say that $\XX$ has \emph{$K$-strong Schur $p$-property} if for every $\delta>0$, every $K_0>K$, and every infinite $\delta$-separated set $A\subset S_\XX$, there exists an infinite subset $B \subset A$ such that
\begin{equation*}
\lmz_p(B)\geq \frac{\delta}{K_0 2^{1/p}}, \quad \umz_p(B)\ge \frac{1}{K_0}.
\end{equation*}
We will say that $\XX$ has the strong Schur $p$-property if it has the $K$-strong Schur $p$-property for some $K\in(0,\infty)$.
\end{defn}

By Lemma~\ref{lem:Cuth}, in \Cref{df:SSchur} we can replace the unit sphere with an annulus by paying the price of increasing the constant $K$. So both the Schur $p$-property and the strong Schur $p$-property are preserved by isomorphisms. It is also rather obvious that subspaces of a quasi-Banach space $\XX$ inherit the Schur $p$-property and the strong Schur $p$-property from $\XX$.

It is known that a quasi-Banach space contains an infinite uniformly separated bounded set if and only if it is infinite-dimensional. Quantitatively, we have the following.

\begin{prop}[cf.\@ {\cite[Lemma 2.8]{AlbiacAnsorena2021b}}]\label{prop:QBUS}
Given an infinite-dimensional quasi-Banach space $\XX$ and $0<\delta<1$, there is $A\subset S_\XX$ infinite and $\delta$-separated.
\end{prop}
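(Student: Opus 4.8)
The plan is to build $A$ one point at a time by a Riesz-lemma-type argument, the key observation being that the classical Riesz lemma carries over \emph{verbatim} to the quasi-Banach setting: its proof uses only the homogeneity of the quasi-norm and the positivity of the distance to a closed subspace, never the triangle inequality. This is precisely what lets us sidestep the absence of local convexity.

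First I would record the Riesz lemma in this generality. Suppose $Y\subsetneq\XX$ is a proper closed subspace and $0<\theta<1$. Fix $z\in\XX\setminus Y$ and set $d\coloneqq\inf_{y\in Y}\norm{z-y}$. Since the quasi-norm induces a (metrizable) linear topology and $Y$ is closed with $z\notin Y$, we have $d>0$: otherwise some sequence in $Y$ would converge to $z$, forcing $z\in\overline{Y}=Y$. Choose $y_0\in Y$ with $\norm{z-y_0}\le d/\theta$ and put $x\coloneqq(z-y_0)/\norm{z-y_0}\in S_\XX$. For any $y\in Y$, the vector $y_0+\norm{z-y_0}\,y$ lies in $Y$, so by homogeneity
\[
\norm{x-y}=\norm{z-y_0}^{-1}\,\norm{z-(y_0+\norm{z-y_0}\,y)}\ge \frac{d}{\norm{z-y_0}}\ge\theta.
\]
Thus $\inf_{y\in Y}\norm{x-y}\ge\theta$.

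Next I would set up the induction. Pick any $x_1\in S_\XX$, and given $x_1,\dots,x_n\in S_\XX$ let $Y_n\coloneqq\Span\{x_1,\dots,x_n\}$. Finite-dimensional subspaces of a quasi-Banach space are complete, hence closed, and since $\XX$ is infinite-dimensional we have $Y_n\subsetneq\XX$; so the Riesz lemma with $\theta=\delta$ (legitimate because $0<\delta<1$) produces $x_{n+1}\in S_\XX$ with $\inf_{y\in Y_n}\norm{x_{n+1}-y}\ge\delta$. For $i<j$ we have $x_i\in Y_{j-1}$, whence $\norm{x_j-x_i}\ge\inf_{y\in Y_{j-1}}\norm{x_j-y}\ge\delta$. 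Therefore $A\coloneqq\{x_n\colon n\in\Nat\}$ is an infinite $\delta$-separated subset of $S_\XX$, as required.

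The only genuinely delicate point, and hence the step I would check most carefully, is that the Riesz computation is valid without local convexity. Once one verifies that the triangle inequality plays no role in the estimate above and that finite-dimensional subspaces are closed, the inductive construction is entirely routine.
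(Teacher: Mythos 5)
Your proof is correct and follows essentially the same route as the paper's: both are inductive Riesz-lemma constructions that exploit only the homogeneity of the quasi-norm and the positivity of the distance to a closed (finite-dimensional) subspace, the paper merely phrasing the same estimate via the quotient norm $\norm{-x+\YY_{k-1}}$ of a coset rather than via a near-minimizing $y_0$. The delicate points you flag (finite-dimensional subspaces are closed; the triangle inequality is never used) are exactly the ones that make the classical argument carry over.
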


\begin{proof}
As in the proof of \cite[Lemma 2.8]{AlbiacAnsorena2021b}, we construct $(y_n)_{n=1}^\infty$ in $S_\XX$ inductively. Let $k\geq 1$ and suppose that $y_1, \ldots, y_{k-1}$ have already been chosen. Set $\YY_{k-1} = [y_n  \colon 1\le n \le k-1]$. Choose $x\in \XX$ such that $s_k \coloneqq \norm{-x+\YY_{k-1}} \in (\delta, 1)$. Then, there exists $x_k \in \XX$ such that $x + x_k \in \YY_{k-1}$ and $t_k \coloneqq \norm{x_k} < 1$. Define $y_k = x_k / t_k$. For each $1 \le n \le k-1$, \[ \norm{y_k-y_n}=\frac{1}{t_k} \norm{ x_k + x- t_k y_n- x} \ge \frac{s_k}{t_k} \ge\delta\text. \]

This completes the inductive construction, and the claim follows.
\qedhere
\end{proof}

This yields, on one hand, that any finite-dimensional space trivially has the Schur $p$-property for any $p$ and, on the other hand, that there exists an infinite-dimensional quasi-Banach space with the Schur $p$-property if and only if $\ell_p$ is a Schur $p$-space. So, the consistency of the theory depends on the fact that, as we will see, $\ell_p$ itself is a $p$-Schur space. In this respect, we point out that, although \Cref{def:pSchur} makes a priori sense for $p$ in the whole range $(0,\infty]$, it is of no interest when $p>1$. This can be inferred from the fact that the unit vector system $\EB_p\coloneqq (\ee_n)_{n=1}^\infty$ of $\ell_p$ ($c_0$ if $p=\infty$) is weakly null. Indeed, if $f\in\ell_p\setminus\{0\}$, the sequence $\XB\coloneqq (f+\ee_n)_{n=1}^\infty$ is uniformly separated and bounded and does not have a weakly null subsequence. Hence, no subsequence of $\XB$ is equivalent to $\EB_p$.

Recall that, by the Aoki-Rolewicz theorem \cite{Aoki1942,Rolewicz1957}, every quasi-Banach space $\XX$ is \emph{locally $q$-convex} for some $0<q\le 1$. That is, there exists a constant $\gamma \geq 1$ such that
\begin{equation}\label{eq:lqc}
\norm{\sum_{j\in J} x_j} \leq \gamma \left( \sum_{j\in J} \norm{x_j}^q \right)^{1/q}
\end{equation}
for every finite family $(x_j)_{j\in J}$ in $\XX$. In fact, we can choose $q=\log^{-1}_2(\rho)$, where $\rho$ is the diametral modulus of $\XX$. Moreover, any locally $q$-convex quasi-Banach space $\XX$ is a $q$-Banach under renorming, i.e., it can be equipped with an equivalent $q$-norm. Given a quasi-Banach space $\XX$, let $\lc(\XX)\in(0,1]$ be the supremum of all $q\in(0,1]$ such that $\XX$ is locally $q$-convex.

We point out that there are quasi-Banach spaces $\XX$ that are not locally $\lc(\XX)$-convex. For instance, the weak Lorentz space $L_{p,\infty}$, $0<p\le 1$, is locally $q$-convex for all $0<q<p$, and fails to be locally $p$-convex. We also point out that there are quasi-norms that fail to be continuous maps relative to the topology they induce (see \cite{Hyers1939}).

In any case, $q$-norms are uniformly continuous maps. Although we will not, a priori, require the quasi-norms with which the spaces are equipped to be $q$-norms for some $q\in(0,1]$, we will, at certain points, assume that these quasi-norms are uniformly continuous. Specifically, this means that if we define
\[
\varepsilon(t)\coloneqq\sup\enbrace{ \abs{\norm{x+y}-\norm{x}} \colon x,y\in \XX, \, \norm{y}\le t}, \quad t>0,
\]
then $\lim_{t\to 0^+} \varepsilon(t)=0$. The map $\varepsilon$ is called the \emph{modulus of continuity} of $\norm{\cdot}$.

\begin{rem}
Since $\lc(\ell_p)=p$, we have $\lc(\XX)\le p$ for any infinite-dimensional quasi-Banach $p$-Schur space $\XX$. As we will show, there are instances of infinite-dimensional quasi-Banach spaces $\XX$ with the Schur $p$-property and $\lc(\XX)<p$. In any case, since $\ell_q$ is not a subspace of $\ell_p$ unless $p=q$ \cite{Stiles1970}, there is at most one index $p\in(0,1]$ so that $\XX$ has the Schur $p$-property.
\end{rem}

\begin{rem}
By \eqref{eq:TB}, an infinite-dimensional quasi-Banach space $\XX$ cannot have the $K$-strong Schur $p$-property for $K<1$.
Furthermore, if
\begin{equation*}
\slmz_p(A) \geq \frac{\delta}{K 2^{1/p} }
\end{equation*}
for every infinite $\delta$-separated set $A\subset S_\XX$, then $K\ge 1$. Indeed, for any $a>1$ and $\varepsilon>0$, there are $\delta>1/a$ and $A\subset S_\XX$ infinite such that
\[
\delta<\norm{x-y}<\delta+\varepsilon, \quad x,y\in A, \, x\not=y\text,
\]
see \Cref{prop:QBUS} and \cite[Proposition 23]{Koszmider25}. The set $A$ is $\delta$-separated, and
\[
\slmz_p(A)\leq \frac{\delta+\varepsilon}{2^{1/p}} = \frac{\delta+\varepsilon}{\delta} \frac{\delta}{2^{1/p}} < \enpar{1+a\varepsilon }\frac{\delta}{2^{1/p}}.
\]
Hence, $K\ge 1/(1+a \varepsilon)$. Fixing $a$ and letting $\varepsilon$ tend to zero, we get $K\ge 1$.
\end{rem}

Note that a quasi-Banach space $\XX$ has the \emph{$K$-strong Schur $p$-property} if and only if
\begin{equation*}
\min\enbrace{\frac{2^{1/p} \slmz_p(A)}{\delta}, \sumz_p(A) } \geq \frac{1}{K}
\end{equation*}
for every $\delta>0$ and every $\delta$-separated set in $S_\XX$. We go on by observing that we can replace normalized sets with bounded ones in this inequality.
\begin{lemma}\label{ekvivStrongPSchur}
Let $\XX$ be a quasi-Banach space equipped with a uniformly continuous quasi-norm. Assume that $\XX$ has the $K$-strong Schur $p$-property for some $0<p\le 1$ and $1\le K<\infty$. 

Let $A\subset \XX$ be infinite, bounded, and $\delta$-separated for some $\delta>0$. Put $M\coloneqq\sup\enbrace{\norm{x}\colon x\in A}$. Then
\[
\slmz_p(A)\geq \mu\coloneqq\frac{\delta}{K 2^{1/p}}, \quad
\sumz_p(A)\geq \nu\coloneqq\frac{1}{M K}.
\]
\end{lemma}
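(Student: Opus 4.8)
The plan is to transfer the $K$-strong Schur $p$-property, which is phrased for normalized sets, to the bounded set $A$ by means of the radial projection $\pi$ together with \Cref{lem:Cuth}\ref{it:MZNormalized}. Fix $K_0>K$. Since $A$ is $\delta$-separated, at most one of its points can have quasi-norm at most $\delta/(2\rho)$ (two such points would satisfy $\norm{x-y}\le\rho\max\{\norm{x},\norm{y}\}\le\delta/2<\delta$), so after discarding it we may assume $\norm{x}\in(\delta/(2\rho),M]$ for all $x\in A$. For a small parameter $t>0$, a routine pigeonhole argument on this bounded range of quasi-norms yields an infinite $A_0\subset A$ all of whose points have quasi-norm in a single interval $[a,a+t]$, with $a$ bounded away from $0$ once $t$ is small.

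The crux is to estimate the separation of $\pi(A_0)\subset S_\XX$ without picking up the factor $\rho$ that the crude bound of \Cref{lem:Cuth}\ref{it:NormIneq} would introduce. For $x,y\in A_0$ with $\norm{x}\le\norm{y}$, I would write
\[
\norm{x}\,\enpar{\pi(x)-\pi(y)}=(x-y)+\enpar{1-\tfrac{\norm{x}}{\norm{y}}}y,
\]
where the second summand has quasi-norm $\norm{y}-\norm{x}\le t$. Appealing to the uniform continuity of the quasi-norm through its modulus of continuity $\varepsilon(\cdot)$, this gives
\[
\norm{x}\,\norm{\pi(x)-\pi(y)}\ge\norm{x-y}-\varepsilon(t)\ge\delta-\varepsilon(t),
\]
so $\pi(A_0)$ is $\delta'$-separated with $\delta'\ge(\delta-\varepsilon(t))/(a+t)$; in particular $\pi|_{A_0}$ is one-to-one as soon as $\varepsilon(t)<\delta$. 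This is the step I expect to be the main obstacle: the estimate obtained directly from \Cref{lem:Cuth}\ref{it:NormIneq} only produces $\delta'\ge\delta/(\rho(a+t))$, and the superfluous $\rho$ would survive into the final constant. It is precisely the uniform continuity hypothesis that replaces $\rho$ by a factor tending to $1$ as $t\to0^+$.

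Applying the $K_0$-strong Schur $p$-property to the infinite $\delta'$-separated set $\pi(A_0)\subset S_\XX$ furnishes an infinite $B'\subset\pi(A_0)$ with $\lmz_p(B')\ge\delta'/(K_0 2^{1/p})$ and $\umz_p(B')\ge 1/K_0$. Setting $B\coloneqq(\pi|_{A_0})^{-1}(B')\subset A$, so that $\pi(B)=B'$ and $\pi|_B$ is one-to-one, \Cref{lem:Cuth}\ref{it:MZNormalized} yields
\[
\lmz_p(B)\ge\lmz_p(B')\,\inf_{x\in B}\norm{x}\ge\frac{1}{K_0 2^{1/p}}\,\frac{(\delta-\varepsilon(t))\,a}{a+t},\qquad
\umz_p(B)\ge\frac{\umz_p(B')}{\sup_{x\in B}\norm{x}}\ge\frac{1}{K_0 M},
\]
using $\inf_{x\in B}\norm{x}\ge a$ and $\sup_{x\in B}\norm{x}\le M$. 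As $B\subset A$ is infinite, these are lower bounds for $\slmz_p(A)$ and $\sumz_p(A)$, respectively.

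It then remains to pass to the limit. Letting $t\to0^+$, uniform continuity gives $\varepsilon(t)\to0$, while $a/(a+t)\to1$ because $a$ stays bounded away from $0$; hence the first displayed bound tends to $\delta/(K_0 2^{1/p})$, and the second is already independent of $t$. This establishes $\slmz_p(A)\ge\delta/(K_0 2^{1/p})$ and $\sumz_p(A)\ge1/(K_0 M)$ for every $K_0>K$, and letting $K_0\to K^+$ gives the desired inequalities $\slmz_p(A)\ge\mu$ and $\sumz_p(A)\ge\nu$.
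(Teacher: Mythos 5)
Your proof is correct and follows essentially the same route as the paper's: restrict to an infinite subset whose quasi-norms lie in a short interval, use the uniform continuity of the quasi-norm (rather than the crude $\rho$-bound) to show the radial projection of that subset is suitably separated in $S_\XX$, apply the strong Schur $p$-property there, and pull the estimates back via Lemma~\ref{lem:Cuth}\ref{it:MZNormalized} before taking limits. The only differences are cosmetic: your decomposition $x-\tfrac{\norm{x}}{\norm{y}}y=(x-y)+\bigl(1-\tfrac{\norm{x}}{\norm{y}}\bigr)y$ replaces the paper's comparison of $\tfrac{y}{\norm{y}}-\tfrac{x}{\norm{x}}$ with $\tfrac{y}{\norm{y}}-\tfrac{x}{\norm{y}}$, and you make the limit $K_0\to K^+$ explicit.
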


\begin{proof}
Leaving the zero vector from $A$ if necessary, we can assume that $m\coloneqq\inf\enbrace{\norm{x}\colon x\in A}>0$. Pick
\[
0<\varepsilon_0<\varepsilon<\min\enbrace{\frac{\delta}{M}, m}.
\]
Let $\eta>0$ be such that $\norm{x-y}\le \eta$ implies $ \abs{ \norm{x}-\norm{y} }\le\varepsilon_0$. There is an interval $I$ of length $\eta m$ such that $A_0\coloneqq\enbrace{ x\in A \colon \norm{x}\in I}$ is infinite. For $x$, $y\in A_0$ we have
\[
\norm{ \frac{x}{\norm{x}} - \frac{x}{\norm{y}}} = \frac{\abs{ \norm{x}-\norm{y} }}{\norm{y}}\le \eta,
\]
whence
\[
\abs{\norm{ \frac{y}{\norm{y}} - \frac{x}{\norm{x}}} - \norm{ \frac{y}{\norm{y}} - \frac{x}{\norm{y}}}}\le \varepsilon_0.
\]
Set $m_0\coloneqq\inf\enbrace{\norm{x}\colon x\in A_0}$ and $M_0\coloneqq\sup\enbrace{\norm{x}\colon x\in A_0}$. We infer that the set $\pi(A_0)$ is $\enpar{\delta/M_0-\varepsilon_0}$-separated and $\pi|_{A_0}$ is one-to-one. Consequently, there is $B\subset A_0$ infinite such that
\[
\lmz_p(\pi(B)) \ge \frac{1}{M_0} \frac{\delta-\varepsilon M}{K 2^{1/p}}, \quad \umz_p(\pi(B)) \ge \frac{m-\varepsilon }{m K}.
\]
Since $m\le M_0 \le m_0+\varepsilon$, applying Lemma~\ref{lem:Cuth}\ref{it:MZNormalized} we obtain
\[
\lmz_p(B)\ge \mu(\varepsilon)\coloneqq \frac{m-\varepsilon}{m}\frac{\delta-\varepsilon M}{K 2^{1/p}},
\quad
\umz_p(B)\ge \nu(\varepsilon)\coloneqq \frac{1}{M}\frac{m-\varepsilon}{m K}.
\]

Letting $\varepsilon$ tend to zero, we get $\mu(\varepsilon)$ arbitrarily close to $\mu$, and $\nu(\varepsilon)$ arbitrarily close to $\nu$.
\end{proof}

Some quantitative measures of noncompactness are useful for estimating $\slmz_p(A)$. Based on \cite{KKS13}, we gather three of them for $p$-metric spaces. We will use the convention that the supremum of an empty set is zero while its infimum is infinity.

Let $(\Mt,d)$ be a quasi-metric space. Given $A$, $B\subset \Mt$, we let
\[
\widehat{d}(A,B)\coloneqq\sup_{a\in A}d(a,B).
\]
Given $A\subset \Mt$, we define
\begin{align*}
\hsd(A)&\coloneqq\inf\enbrace{\widehat{d}(A,F)\colon F\in[\Mt]^{<\omega}\setminus\{\emptyset\}},\\
\hsd_0(A)&\coloneqq\inf\enbrace{\widehat{d}(A,F)\colon F\in[A]^{<\omega}\setminus\{\emptyset\}},\\
\itc(A)&\coloneqq\sup\enbrace{\delta>0\colon \mbox{there exists $B\subset A$ infinite and $\delta$-separated}},\\
\krt(A)&\coloneqq\inf\{\varepsilon>0\colon \text{$A$ is finitely covered by sets of diameter less than $\varepsilon$}\}.
\end{align*}

Note that these quantities are finite if and only if $A$ is bounded. For metric spaces, $\hsd(A)$ is the classical Hausdorff measure of noncompactness, $\krt(A)$ the classical Kuratowski measure of noncompactness, $\hsd_0(A)$ is known as the inner Hausdorff measure of noncompactness, and $\itc(A)$ is the Istratescu measure of noncompactness.

Let $\rho$ be the diametral modulus of $d$. It is easy to check that, since $\diam\enpar{B(x,r)}\leq \rho r$ for all $x\in \Mt$ and $r\in(0,\infty)$,
\begin{equation*}
\hsd(A)\leq \hsd_0(A)\leq \itc(A) \leq \krt(A)\leq \rho \hsd(A), \quad A\subset \Mt.
\end{equation*}
Moreover, $A$ is compact if and only if $\krt(A)=0$.

We present several results that link these measures of noncompactness to the ability to extract subsequences equivalent to the $\ell_p$-basis.

\begin{prop}\label{fact:scAndChi}
Let $0<p\le 1$, $\XX$ be a quasi-Banach space with diametral modulus $\rho$, and $A\subset \XX$. Then
\[
\slmz_p(A)\leq \rho 2^{-1/p} \hsd(A).
\]
In particular, if $\XX$ is a $p$-Banach space, then $\slmz_p(A)\leq \hsd(A)$.
\end{prop}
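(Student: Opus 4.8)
The plan is to produce, for each infinite $B\subset A$, a two-element subfamily witnessing that $\lmz_p(B)$ is at most (essentially) $\rho\,2^{-1/p}\hsd(A)$, and then to take the supremum over $B$. First I would fix an infinite $B\subset A$ and any $r>\hsd(A)$; by the definition of $\hsd$ there is a nonempty finite $F\subset\XX$ with $\widehat{d}(A,F)<r$, so every $b\in B\subset A$ satisfies $d(b,F)\le\widehat{d}(A,F)<r$.

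Next, since $B$ is infinite and $F$ is finite, a pigeonhole argument (assigning to each $b$ a point of $F$ within distance $r$) would yield a single center $f\in F$ and two distinct points $x,y\in B$ with $\norm{x-f}<r$ and $\norm{y-f}<r$. Writing $x-y=(x-f)+(f-y)$ and invoking the defining inequality of the diametral modulus, $\norm{u+v}\le\rho\max\enbrace{\norm{u},\norm{v}}$, I would bound
\[
\norm{x-y}\le\rho\max\enbrace{\norm{x-f},\norm{y-f}}<\rho r.
\]

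To convert this into an estimate for $\lmz_p(B)$, I would test the infimum defining $\lmz_p(B)$ against the family indexed by $\enbrace{x,y}$ with coefficients $a_x=2^{-1/p}$ and $a_y=-2^{-1/p}$; these lie in $S_{\ell_p(\enbrace{x,y})}$ precisely because $\abs{a_x}^p+\abs{a_y}^p=1$, which is exactly where the factor $2^{-1/p}$ originates. Hence
\[
\lmz_p(B)\le\norm{a_x\,x+a_y\,y}=2^{-1/p}\norm{x-y}<\rho\,2^{-1/p}r.
\]
Letting $r\downarrow\hsd(A)$ gives $\lmz_p(B)\le\rho\,2^{-1/p}\hsd(A)$, and taking the supremum over all infinite $B\subset A$ yields $\slmz_p(A)\le\rho\,2^{-1/p}\hsd(A)$. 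For the final assertion, I would recall that a $p$-Banach space satisfies $\norm{x+y}^p\le\norm{x}^p+\norm{y}^p$, which forces $\rho\le 2^{1/p}$, so the prefactor $\rho\,2^{-1/p}$ is at most $1$ and $\slmz_p(A)\le\hsd(A)$.

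The argument is short, so I do not expect a serious obstacle; the only genuinely delicate point is recognizing that just \emph{two} points suffice and choosing the coefficients $\pm 2^{-1/p}$ so that the test vector has $\ell_p$-quasinorm exactly $1$—any other normalization would misplace the constant. The pigeonhole step is what guarantees these two points share a common nearby center, which is precisely what makes the diametral-modulus bound on $\norm{x-y}$ applicable.
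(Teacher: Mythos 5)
Your proof is correct and is essentially the paper's argument run in the contrapositive direction: the paper fixes $K<\rho^{-1}2^{1/p}\slmz_p(A)$, notes that any infinite $B$ with $\lmz_p(B)\ge\rho 2^{-1/p}K$ is $\rho K$-separated (via the same two-point test vector with coefficients $\pm 2^{-1/p}$), and derives a contradiction with $\widehat{d}(A,F)<K$ by the same pigeonhole on a finite net. The ingredients --- the pigeonhole, the diametral-modulus bound, and the $2^{-1/p}$ normalization --- are identical, so no further comment is needed.
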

\begin{proof}
If $0<K<\rho^{-1} 2^{1/p} \slmz_p(A)$ there is an infinite set $B\subset A$ with $\lmz_p(B)\ge \rho 2^{-1/p} K$. Therefore,
\[
\norm{x-y} =2^{1/p} \frac{\norm{x-y}}{2^{1/p}} \geq 2^{1/p} \lmz_p(B) \ge \rho K, \quad x,\, y\in B,\, x\neq y.
\]
Pick $F\subset A$ finite and assume by contradiction that $\widehat{d}(A,F)< K$. Then, for each $x\in A$ there is $z(x)\in F$ such that $\norm{x-z(x)}<K$. Pick $x$, $y\in B$, $x\not=y$, with $z(x)=z(y)$. We deduce that $\norm{x-y}<\rho K$. This absurdity puts an end to the proof.
\end{proof}

Although the Schur $p$-property, $0<p\le 1$, can be studied in quasi-Banach spaces which are not locally $p$-convex, to state our next result we restrict ourselves to $p$-Banach spaces. Note that if $\XX$ is a $p$-Banach space and $B\subset \XX$ is nonempty, then
\[
\umz_p(B)= \frac{1}{\sup\enbrace{\norm{x}\colon x\in B}}.
\]
Hence, given $A\subset\XX$ infinite,
\begin{equation}\label{eq:smzpC}
\sumz_p(A)=\frac{1}{\limsup_{x\in A} \norm{x}}.
\end{equation}

\begin{prop}
Let $0<p\le 1$ and $\XX$ be a $p$-Banach space. Given $K\geq 1$, the following conditions are equivalent.
\begin{enumerate}[label=(\roman*),leftmargin=*,widest=iii]
\item\label{it:strongSchur} $\XX$ has the $K$-strong Schur $p$-property.
\item\label{it:inequalityWeak} $\itc(A)\le K 2^{1/p} \slmz_p(A)$ for every $A\subset S_\XX$.
\item\label{it:inequalityStronger} $\itc(A)\le K 2^{1/p} \slmz_p(A)$ for every bounded set $A\subset \XX$.
\end{enumerate}
\end{prop}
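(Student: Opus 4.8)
The plan is to establish the cyclic chain of implications \ref{it:strongSchur} $\Rightarrow$ \ref{it:inequalityStronger} $\Rightarrow$ \ref{it:inequalityWeak} $\Rightarrow$ \ref{it:strongSchur}, since the passage from the bounded version to the normalized version is a free restriction. Two facts special to the $p$-Banach setting would be used throughout: first, the $p$-norm is uniformly continuous (as $q$-norms always are), so that \Cref{ekvivStrongPSchur} is at our disposal; second, the identity \eqref{eq:smzpC} forces $\umz_p(B)=1$ whenever $B\subset S_\XX$, so that the constraint on $\umz_p$ in \Cref{df:SSchur} is met automatically on the sphere. Recognizing that this second constraint becomes vacuous is the conceptual key that reduces the strong Schur $p$-property to a single inequality involving $\slmz_p$.

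For \ref{it:strongSchur} $\Rightarrow$ \ref{it:inequalityStronger}, I would take a bounded $A\subset\XX$. The inequality is trivial when $\itc(A)=0$, so I assume $\itc(A)>0$ and fix an arbitrary $0<\delta<\itc(A)$. By definition of $\itc$ there is an infinite $\delta$-separated $B\subset A$, which is again bounded; applying \Cref{ekvivStrongPSchur} to $B$ gives $\slmz_p(B)\ge\delta/(K2^{1/p})$. Since every infinite subset of $B$ is an infinite subset of $A$, monotonicity of $\slmz_p$ yields $\slmz_p(A)\ge\slmz_p(B)\ge\delta/(K2^{1/p})$, and letting $\delta\uparrow\itc(A)$ delivers $\itc(A)\le K2^{1/p}\slmz_p(A)$. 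The implication \ref{it:inequalityStronger} $\Rightarrow$ \ref{it:inequalityWeak} is immediate because subsets of $S_\XX$ are bounded.

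For \ref{it:inequalityWeak} $\Rightarrow$ \ref{it:strongSchur}, I would fix $\delta>0$, $K_0>K$, and an infinite $\delta$-separated set $A\subset S_\XX$. As $A$ itself witnesses $\itc(A)\ge\delta$, hypothesis \ref{it:inequalityWeak} gives $\slmz_p(A)\ge\itc(A)/(K2^{1/p})\ge\delta/(K2^{1/p})>\delta/(K_02^{1/p})$. By the definition of $\slmz_p$ as a supremum, there is an infinite $B\subset A$ with $\lmz_p(B)>\delta/(K_02^{1/p})$, and since $B\subset S_\XX$ we have $\umz_p(B)=1\ge1/K_0$ via \eqref{eq:smzpC}; thus $B$ fulfils both bounds in \Cref{df:SSchur}, establishing the $K$-strong Schur $p$-property.

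The only genuinely substantial ingredient is the normalized-to-bounded passage in \ref{it:strongSchur} $\Rightarrow$ \ref{it:inequalityStronger}, and that labour has already been isolated in \Cref{ekvivStrongPSchur}; the remaining steps are bookkeeping around the supremum definitions of $\itc$ and $\slmz_p$ together with the limits $K_0\downarrow K$ and $\delta\uparrow\itc(A)$. The main point to watch is that the strict inequality $K_0>K$ is precisely what converts an \emph{attained} bound $\slmz_p(A)\ge\delta/(K2^{1/p})$ into the existence of one single infinite $B$ realizing the slightly weaker constant $\delta/(K_02^{1/p})$ demanded by \Cref{df:SSchur}.
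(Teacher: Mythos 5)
Your proposal is correct and follows essentially the same route as the paper: the implication \ref{it:strongSchur} $\Rightarrow$ \ref{it:inequalityStronger} via \Cref{ekvivStrongPSchur} applied to a $\delta$-separated subset with $\delta\uparrow\itc(A)$, and the equivalence with \ref{it:inequalityWeak} via \eqref{eq:smzpC} making the $\umz_p$-constraint automatic on $S_\XX$, are exactly the paper's two ingredients. The only difference is organizational (you arrange the steps as a cycle and spell out the ``clearly'' that the paper leaves to the reader), which is harmless.
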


\begin{proof}
Clearly, $\XX$ has the $K$-strong Schur $p$-property if and only if $\itc(A) \le K 2^{1/p} \slmz_p(A)$ and $1\le K \sumz_p(A)$ for every $A\subset S_\XX$ infinite. Hence \ref{it:strongSchur} and \ref{it:inequalityWeak} are equivalent by \eqref{eq:smzpC}. Since \ref{it:inequalityStronger} is formally stronger than \ref{it:inequalityWeak}, we have to prove that \ref{it:strongSchur} implies \ref{it:inequalityStronger}. To that end, we pick a bounded set $A\subset \XX$ with $\itc(A)>0$. Given $0<\delta<\itc(A)$, there exists an infinite $\delta$-separated set $B\subset A$. By \Cref{ekvivStrongPSchur},
\[
\slmz_p(A)\geq \slmz_p(B)\geq \frac{\delta}{K 2^{1/p}}.
\]
Taking the limit as $\delta \to \itc(A)$ yields the desired inequality.
\end{proof}

We finish this section with an auxiliary results that will be useful in extracting subsequences equivalent to the canonical basis of $\ell_p$.

\begin{lemma}\label{lem:dichothomy}
Let $\XB$ be a bounded sequence in a complete quasi-metric space $(\Mt,d)$. Then $\XB$ contains either a uniformly separated subsequence or a convergent subsequence.
\end{lemma}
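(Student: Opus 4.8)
The plan is to reduce the statement to a clean dichotomy governed by the Istratescu measure $\itc(A)$ of the range $A\coloneqq\enbrace{x_n\colon n\in\Nat}$ of the sequence $\XB=(x_n)_{n=1}^\infty$. First I would dispose of the degenerate situation: if some value is attained by infinitely many terms of $\XB$, the corresponding constant subsequence converges and we are done. Passing to a subsequence, we may therefore assume that the terms of $\XB$ are pairwise distinct, so that $A$ is infinite and subsets of $A$ correspond bijectively to subsequences of $\XB$ via their indices. Since $A$ is bounded, all the measures of noncompactness introduced above are finite; in particular $\itc(A)\in[0,\infty)$, and the two exhaustive cases are $\itc(A)>0$ and $\itc(A)=0$.

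If $\itc(A)>0$, then by the very definition of $\itc$ there exist $\delta>0$ and an infinite $\delta$-separated set $B\subset A$. Enumerating $B$ along the (distinct) indices of $\XB$ in increasing order produces a $\delta$-separated, and hence uniformly separated, subsequence of $\XB$. This settles the first alternative.

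The substance of the argument lies in the case $\itc(A)=0$. Here I would invoke the comparison chain $\hsd(A)\le\itc(A)\le\krt(A)\le\rho\,\hsd(A)$ recorded above: from $\itc(A)=0$ we get $\hsd(A)=0$, whence $\krt(A)\le\rho\,\hsd(A)=0$, i.e.\ $\krt(A)=0$. By the definition of the Kuratowski measure, this means that for every $k\in\Nat$ the set $A$ admits a finite cover by sets of diameter less than $1/k$. I would then run the standard diagonal extraction: starting from $N_0=\Nat$ and applying the pigeonhole principle at each stage, I build a decreasing chain $N_0\supset N_1\supset N_2\supset\cdots$ of infinite index sets such that, for each $k$, all points $x_n$ with $n\in N_k$ lie in a single member of the cover of diameter less than $1/k$. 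Choosing $n_1<n_2<\cdots$ with $n_k\in N_k$ yields a subsequence $(x_{n_k})_{k}$ satisfying $d(x_{n_i},x_{n_j})<1/k$ whenever $i,j\ge k$; thus $(x_{n_k})_k$ is Cauchy, and by completeness of $(\Mt,d)$ it converges.

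The only place where the quasi-metric (rather than genuinely metric) nature of $d$ intervenes is the implication $\itc(A)=0\Rightarrow\krt(A)=0$, where the diametral modulus $\rho$ enters; I expect this to be the sole point requiring care, and it is already fully settled by the inequalities established earlier. By contrast, the diagonal step itself is insensitive to the relaxed triangle inequality, since belonging to a common set of small diameter bounds all pairwise distances directly, with no appeal to the triangle inequality. Consequently the whole proof rests on the qualitative dichotomy for $\itc(A)$ together with the measure-of-noncompactness estimates, and no further quantitative input is needed.
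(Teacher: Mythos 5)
Your proof is correct, but it takes a genuinely different route from the paper's. The paper argues directly on the sequence via infinite Ramsey theory: for each $\varepsilon>0$ it extracts a subsequence all of whose mutual distances lie in some interval $[\delta,\delta+\varepsilon]$; if such a $\delta$ can be taken positive one obtains a uniformly separated subsequence, and otherwise a Cantor diagonal over a null sequence of $\varepsilon$'s yields a Cauchy, hence convergent, subsequence. You instead run the dichotomy through the Istratescu measure of the range and the recorded chain $\hsd(A)\le\hsd_0(A)\le\itc(A)\le\krt(A)\le\rho\,\hsd(A)$, so that $\itc(A)=0$ forces $\krt(A)=0$, i.e.\ total boundedness in the Kuratowski sense, after which a pigeonhole diagonal produces a Cauchy subsequence. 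The essential combinatorial input thus differs: Ramsey's theorem in the paper versus the maximality and pigeonhole arguments packaged into the chain of inequalities in your version. Your approach has the merit of reusing the noncompactness machinery already set up in the section and of isolating the single point where the diametral modulus $\rho$ intervenes (the passage from Hausdorff to Kuratowski covers), whereas the paper's Ramsey argument never invokes $\rho$ at all. Your preliminary reduction to pairwise distinct terms is harmless and correctly handled, and both arguments use completeness only in the final step.
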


\begin{proof}
By infinite Ramsey theory, for every $\varepsilon>0$ and every subsequence $\YB$ of $\XB$ there are a further subsequence $(z_n)_{n=1}^\infty$ of $\YB$ and $\delta=\delta(\varepsilon,\YB)\in[0,\infty)$ such that $d(z_n,z_k)\in[\delta, \delta+\varepsilon]$ or all $n$, $k\in\Nat$ with $n\not=k$. If $\delta(\varepsilon,\YB)>0$ for some $\varepsilon>0$ and some subsequence $\YB$ of $\XB$, we are done. Otherwise, we pick a null sequence $(\varepsilon_n)_{n=1}^\infty$. By diagonal Cantor's technique, there is a subsequence $(x_n)_{n=1}^\infty$ of $\XB$ such that $d(x_n,x_k)\le \varepsilon_n$ for all $n$, $k\in\Nat$ with $k>n$. Since $(x_n)_{n=1}^\infty$ is a Cauchy sequence, we are done.
\end{proof}
\section{The Schur property of \texorpdfstring{$\ell_p$}{}-sums}\label{stability}\noindent
To state and prove the results in this section, we first introduce some terminology. Given $0<p\le \infty$ and a family $(\XX_i)_{i\in \It}$ of quasi-Banach spaces with diametral modulus uniformly bounded, consider the space
\[
\XX\coloneqq\enpar{\bigoplus_{i\in \It} \XX_i}_{\ell_p}=\enbrace{x=(x_i)_{i\in \It} \in\prod_{i\in \It} \XX_i, \, \sigma(x)\coloneqq\enpar{\norm{x_i}}_{i\in \It}\in\ell_p}
\]
equipped with the quasi-norm $\norm{\sigma(\cdot)}_p$. Then $\XX$ is a quasi-Banach space. Moreover, if $0 < p \leq 1$ and each $\XX_i$ is a $p$-Banach space, then $\XX$ is a $p$-Banach space.

Given $x\in \XX$ and $i\in \It$, $x(i)$ denotes the $i$th coordinate of $x$. The support of $x$ is the set
\[
\supp(x)\coloneqq\enbrace{ i\in \It \colon x(i)\not=0}.
\]
Given $\Ft\subset \It$, $x|_\Ft$ stands for the vector in $\XX_\Ft\coloneqq (\bigoplus_{i\in \Ft} \XX_i)_{\ell_p}$ obtained by restricting $x$ to $\Ft$. We denote by $L_\Ft\colon \XX_\Ft\to \XX$ the canonical lifting.

Given $A\subset\XX$, $i\in\It$, and $\Ft\subset \It$, we set
\[
A(i)\coloneqq\enbrace{x(i) \colon x\in A}, \quad A|_\Ft\coloneqq\enbrace{ x|_\Ft \colon x\in A}.
\]

In order to check the Schur property and the strong Schur property, we can restrict ourselves to countable sets. Hence, since the support of any vector in $\XX$ is countable, to check the Schur property and the strong Schur property of $\XX$ it suffices to check it on $\XX_\Ft$ for all $\Ft\subset \It$ countable.

Our first result states that $\ell_p$-sums of finite-dimensional spaces are $p$-Schur spaces. In particular, $\ell_p$ is a $p$-Schur space for each $0<p\le 1$.

In what follows, we will use the inequality
\begin{equation}\label{eq:1lessp}
\norm{\alpha}_1\le \norm{\alpha}_p, \quad \alpha\in\FF^{\Nat}, \, 0<p\le 1.
\end{equation}

\begin{theorem}\label{thm:lp}
Let $(\XX_i)_{i\in I}$ be a family of finite-dimensional quasi-Banach spaces with uniformly bounded diametral modulus and let $0<p\le 1$. Then $\XX\coloneqq (\bigoplus_{i\in \It} \XX_i)_{\ell_p}$ has the strong Schur $p$-property. Moreover, if $\XX_i$ is a $p$-Banach space for all $i\in \It$, then $\XX$ has the $1$-strong Schur $p$-property.
\end{theorem}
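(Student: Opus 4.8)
\emph{Plan.} The proof rests on a disjointification (gliding--hump) argument that exploits the exact additivity of the $\ell_p$-sum quasi-norm on vectors with disjoint supports: if $u_1,\dots,u_m\in\XX$ have pairwise disjoint supports, then $\norm{\sum_k a_k u_k}^p=\sum_k\abs{a_k}^p\norm{u_k}^p$, since the outer quasi-norm of $\XX$ is always exactly $\ell_p$. As noted before the statement, every vector of $\XX$ has countable support, so it suffices to treat $\It=\Nat$. Fix $\delta>0$ and an infinite $\delta$-separated set $A\subset S_\XX$, and enumerate a sequence $(x_n)_n$ in $A$. Since $\norm{x_n(i)}\le\norm{x_n}=1$ and each $\XX_i$ is finite-dimensional, every coordinate sequence $(x_n(i))_n$ is relatively compact; a diagonal extraction yields a subsequence (still denoted $(x_n)$) converging coordinatewise to some $x_\infty$, and Fatou's lemma gives $\sum_i\norm{x_\infty(i)}^p\le 1$, so $x_\infty\in\XX$. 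Put $z_n\coloneqq x_n-x_\infty$. Then $z_n\to 0$ coordinatewise while $\norm{z_n-z_m}=\norm{x_n-x_m}\ge\delta$, so $(z_n)$ remains $\delta$-separated.

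\emph{Disjointification.} For finite $E\subset\It$ let $P_E x\coloneqq L_E(x|_E)$ be the norm-one coordinate projection. Fix a rapidly decreasing sequence $\varepsilon_k\downarrow 0$. Because each $z_n$ lies in the $\ell_p$-sum and converges coordinatewise to $0$, a Bessaga--Pełczyński selection produces a subsequence $(z_{n_k})_k$, an increasing chain of finite sets $\emptyset=F_0\subset F_1\subset\cdots$ with $\norm{x_\infty-P_{F_1}x_\infty}<\varepsilon_1$, and, writing $E_k\coloneqq F_k\setminus F_{k-1}$, vectors $u_k\coloneqq P_{E_k}z_{n_k}$ such that $\norm{z_{n_k}-u_k}<\varepsilon_k$ and $\norm{P_{F_{k-1}}z_{n_k}}<\varepsilon_k$. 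The blocks $E_k$ are pairwise disjoint, so the $u_k/\norm{u_k}$ are isometrically equivalent to the canonical $\ell_p$-basis. Discarding the index $k=1$, we set $B\coloneqq\enbrace{x_{n_k}\colon k\ge 2}$, so that $x_\infty$ is essentially supported on $F_1$, disjointly from every remaining $E_k$.

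\emph{The two estimates.} For the lower bound, disjointness gives $\norm{u_k-u_l}^p=\norm{u_k}^p+\norm{u_l}^p$. When each $\XX_i$ is a $p$-Banach space the outer quasi-norm is $p$-subadditive, so with $r_j\coloneqq z_{n_j}-u_j$ we get $\norm{u_k-u_l}\ge(\norm{z_{n_k}-z_{n_l}}^p-\norm{r_k}^p-\norm{r_l}^p)^{1/p}\ge(\delta^p-o(1))^{1/p}$; passing to a further subsequence we may assume $\norm{u_k}\ge\delta/2^{1/p}-o(1)$ for all $k$. Projecting onto the disjoint blocks through $\norm{y}^p\ge\sum_k\norm{P_{E_k}y}^p$ and using the reverse $p$-triangle inequality on each $E_k$ to discard the small traces of $x_\infty$ (bounded by $\varepsilon_1$) and of the $r_j$, we obtain $\norm{\sum_k a_k x_{n_k}}^p\ge(\delta/2^{1/p})^p(1-o(1))\norm{a}_p^p$, whence $\lmz_p(B)\ge\delta/(K_0 2^{1/p})$ for any prescribed $K_0>1$ once the $\varepsilon_k$ are small enough. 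In this case the upper estimate is free: $\XX$ is a $p$-Banach space and $B\subset S_\XX$, so $\umz_p(B)=1/\sup_{x\in B}\norm{x}=1$. This establishes the $1$-strong Schur $p$-property.

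\emph{General case and main obstacle.} For an arbitrary finite-dimensional family the uniform bound on the diametral moduli makes the diametral modulus of $\XX$ uniformly bounded, and the same construction survives with each inequality weakened by factors depending only on that modulus: the $p$-subadditivity and reverse $p$-triangle steps are replaced by their quasi-normed analogues, giving $\lmz_p(B)\ge c\,\delta$ for some $c>0$, while for the upper bound the quasi-triangle inequality combined with $\norm{a}_1\le\norm{a}_p$ from \eqref{eq:1lessp} bounds the coefficient $\abs{\sum_k a_k}$ of the shared part $x_\infty$ and yields $\norm{\sum_k a_k x_{n_k}}\le K'\norm{a}_p$ for a finite $K'$; together these give the strong Schur $p$-property for some $K$. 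The crux throughout is exactly this common coordinatewise limit $x_\infty$: the selected vectors are disjointly supported only after $x_\infty$ is removed, and the whole argument hinges on showing this shared part is harmless. The inequality $\norm{a}_1\le\norm{a}_p$ is the decisive device, and the rapid decay of $(\varepsilon_k)$ keeps every perturbation of order $o(1)\norm{a}_p$, so that in the $p$-Banach case the constants collapse precisely to those in \Cref{df:SSchur} with $K=1$.
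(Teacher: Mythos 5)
Your proposal is correct and follows essentially the same route as the paper's proof: a diagonal extraction to a coordinatewise limit, a gliding-hump disjointification of the differences, exact $\ell_p$-additivity on disjoint blocks for the two-sided estimates, and the inequality $\norm{\alpha}_1\le\norm{\alpha}_p$ to tame the coefficient of the shared limit vector. The only real difference is bookkeeping: where you discard perturbations block by block via reverse $p$-triangle inequalities, the paper collects all errors into a single term $z_\alpha$ controlled through the Aoki--Rolewicz exponent and the modulus of continuity of the quasi-norm, which is what makes the general (non-$p$-convex) case go through cleanly — your closing paragraph gestures at this but is the one place that would need the same care.
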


\begin{proof}
Assume without loss of generality that $\It$ is countable and that $\XX$ is endowed with a uniformly continuous quasi-norm $\norm{\cdot}$. Let
\[
\varepsilon\colon(0,\infty)\to(0,\infty)
\]
be the modulus of continuity of $\norm{\cdot}$, $\rho$ be its diametral modulus, and $\gamma$ and $q$ be as in \eqref{eq:lqc}. Note that if $\XX_i$ is a $p$-Banach space for all $i\in \It$, then $\rho=2^{1/p}$, $\gamma = 1$, and $q=p$.

Let $\delta>0$ and $A$ be a $\delta$-separated normalized set. Pick $0<c<\delta/\rho$ and $C>\rho$. Since bounded sets of $\XX_i$, $i\in \It$, are relatively compact, by Cantor's diagonal method, there are $x_0\in\prod_{i\in \It} \XX_i$ and a sequence $(x_k)_{k=1}^\infty$ consisting of different vectors in $A$ such that $\lim_k x_k(i)=x_0(i)$ for all $i\in \It$. By Fatou's Lemma, $x_0 \in B_\XX$.

Pick $\varepsilon>0$. Let $\eta>0$ be such that $\varepsilon(\eta)\le \varepsilon$. There is $(\eta_k)_{k=0}^\infty$ in $(0,\infty)$ nonincreasing with $\lim_k\eta_k=0$ such that $\eta_0<\eta/\gamma$ and
\[
\enpar{\sum_{n=1}^\infty \abs{a_k}^q \eta_k^q}^{1/q} \le \enpar{\frac{\eta^q}{\gamma^q}-\eta_0^q}^{1/q}, \quad (a_k)_{k=1}^\infty\in c_{00}.
\]
Choose $y_0\in B_\XX$ finitely supported such that
\[
\norm{ x_0-y_0}\leq\eta_0.
\]

By the gliding-hump technique, passing to a subsequence we may assume that there is $(y_k)_{k=1}^\infty$ in $\XX$ such that
\[
\norm{ x_k-x_0-y_k} \le\eta_k,\quad k\in\Nat,
\]
and $(y_k)_{k=0}^\infty$ is disjointly supported. If $k$ and $n$ are distinct positive integers, then
\[
\abs{ \norm{ y_k-y_n} -\norm{ x_k-x_n} }\le \varepsilon(\rho(\eta_k+\eta_n)),
\]
whence $\delta \le \varepsilon(\rho(\eta_k+\eta_n))+\rho \max\enbrace{\norm{y_k} , \norm{y_n}}$.
Letting $n$ and $k$ tend to infinity, we infer that
\[
\rho \liminf_k \norm{ y_k}\ge \delta.
\]

Hence, we can assume that $\norm{ y_k}\ge c$ for all $k\in\Nat$. Similarly, since $\norm{y_k} \le \varepsilon(\eta_k) + \norm{x_k-x_0}$ for all $k\in\Nat$,
\[
\limsup_k \norm{y_k} \le \rho.
\]
Therefore, passing to another subsequence we can assume that $\norm{ y_k}\le C$ for all $k\in\Nat$.

Let $\alpha=(a_n)_{n=1}^\infty\in c_{00}$ be such that $\norm{\alpha}_p= 1$. If we set
\[
\overline\alpha\coloneqq\sum_{k=1}^\infty a_k, \quad x_\alpha\coloneqq\sum_{k=1}^\infty a_k\, x_k ,
\]
then $x_\alpha=y_\alpha+z_\alpha$, where
\begin{align*}
y_\alpha&= \overline\alpha y_0 +\sum_{k=1}^\infty a_k y_k, \\
z_\alpha&=\overline\alpha(x_0-y_0) +\sum_{k=1}^\infty a_k (x_k-y_k-x_0).
\end{align*}

Set $B=\enbrace{x_k \colon k\in\Nat}$. Since $\abs{\overline\alpha}\le 1$ by \eqref{eq:1lessp},
\[
\norm{z_\alpha}^q \le \gamma^q\enpar{\eta_0^q+\sum_{k=1}^\infty\abs{a_k}^q\eta_k^q}\le\eta^q.
\]

Consequently, $\abs{\norm{x_\alpha}-\norm{y_\alpha}}\le \varepsilon(\eta)\le\varepsilon$. Since
\[
\enpar{\sum_{k=1}^\infty \abs{a_k}^p \norm{ y_k}^p}^{1/p}
\leq\norm{ y_\alpha}
\leq\enpar{ \abs{\overline{\alpha}} \norm{ y_0}^p+\sum_{k=1}^\infty \abs{a_k}^p \norm{ y_k}^p}^{1/p},
\]
we have
\[
\lmz_p(B)\ge c-\varepsilon\quad \text{and} \quad (\enpar{1+ C^p}^{1/p}+\varepsilon) \umz_p(B)\ge 1.\]

Letting $\varepsilon$ tend to $0$ first, and then letting $c$ tend to $\delta/\rho$ and $C$ tend to $\rho$, we obtain $\slmz_p(A) \ge \delta/\rho$ and $\sumz_p(A) \ge \left(1 + \rho^p\right)^{-1/p}$.
\end{proof}

\begin{example}
Let $0<p\le 1$ and $0<q\le \infty$. Then the Besov space
\[
B_{q,p}\coloneqq\enpar{\bigoplus_{k=1}^\infty \ell_q^k}_{\ell_p}
\]
has the strong Schur $p$-property by \Cref{thm:lp}. Note that if $q<p$, then $B_{q,p}$ fails to be locally $p$-convex.
\end{example}

\begin{theorem}
Let $0<p\le 1$ and $(\XX_i)_{i\in I}$ be a finite family of quasi-Banach spaces. Set $\XX\coloneqq (\bigoplus_{i\in I} \XX_i)_{\ell_p}$.
\begin{enumerate}[label=(\roman*), widest=ii,leftmargin=*]
\item\label{it:finite:strong} If $\XX_i$ has the strong Schur $p$-property for all $i\in I$, then $\XX$ has the strong Schur $p$-property.
\item\label{it:finite:weak} If $\XX_i$ has the Schur $p$-property for all $i\in I$, then $\XX$ has the Schur $p$-property.
\end{enumerate}
\end{theorem}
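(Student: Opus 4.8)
The plan is to reduce, by induction on $\abs{I}$, to the case of two summands $\XX=(\XX_1\oplus\XX_2)_{\ell_p}$, where the defining quasi-norm is exactly $p$-additive, $\norm{x}^p=\norm{x(1)}^p+\norm{x(2)}^p$. Since both properties are isomorphism-invariant, I would first renorm each $\XX_i$ to an equivalent $q_i$-norm (Aoki--Rolewicz, with $q_i=\lc(\XX_i)$ if needed), which is uniformly continuous, so that \Cref{ekvivStrongPSchur} applies to $\XX_i$ and sums in $\XX_i$ obey the $q_i$-convexity \eqref{eq:lqc} with constant $\gamma=1$; the passage to the isomorphic $\ell_p$-sum of the renormed summands is harmless. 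Let $K_i$ denote the (possibly new) strong Schur constants and set $\kappa\coloneqq\max\{K_1,K_2\}$; I claim $\kappa$ works for $\XX$.

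Given $\delta>0$, $K_0>\kappa$, and an infinite $\delta$-separated set $A\subset S_\XX$, pick a sequence $(x_n)$ of distinct points and look at its two coordinate sequences. Following the Ramsey uniformization used in the proof of \Cref{lem:dichothomy}, I would pass to a subsequence along which, for $i=1,2$, the norms $\norm{x_n(i)}$ converge to some $t_i\ge0$ (so $t_1^p+t_2^p=1$) and the mutual distances $\norm{x_n(i)-x_m(i)}$ converge to some $\delta_i\ge0$. Since $\norm{x_n-x_m}^p=\norm{x_n(1)-x_m(1)}^p+\norm{x_n(2)-x_m(2)}^p\ge\delta^p$, passing to the limit yields the decisive relation $\delta_1^p+\delta_2^p\ge\delta^p$. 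In particular the two coordinates cannot both converge; for each $i$ with $\delta_i>0$ the set $\{x_n(i)\}$ is bounded and asymptotically $\delta_i$-separated, while for each $i$ with $\delta_i=0$ it converges to some $y_i$ with $\norm{y_i}=t_i$.

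Next I would extract a single infinite index set treating both coordinates, nesting the extractions (the bounds for $\lmz_p$ and $\umz_p$ only improve on subsets). On a coordinate with $\delta_i>0$, \Cref{ekvivStrongPSchur} applied to $\{x_n(i)\}$ in $\XX_i$ (in its single-set form, as in its proof, where one infinite subset realizes both bounds) gives, up to an $\varepsilon$, $\lmz_p\ge\delta_i/(K_i2^{1/p})$ and $\umz_p\ge1/(t_iK_i)$. On a coordinate with $\delta_i=0$ I would, after passing to a subsequence with $\norm{x_n(i)-y_i}=\varepsilon_n$ decaying fast, split $\sum a_nx_n(i)=(\sum a_n)y_i+\sum a_n(x_n(i)-y_i)$: here $\abs{\sum a_n}\le\norm{a}_1\le\norm{a}_p$ by \eqref{eq:1lessp} bounds the first term by $t_i\norm{a}_p$, and $q_i$-convexity together with Hölder's inequality (or $\norm{a}_{q_i}\le\norm{a}_p$ when $q_i\ge p$) makes the residual at most $\varepsilon'\norm{a}_p$. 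Using $\norm{\sum a_nx_n}^p=\norm{\sum a_nx_n(1)}^p+\norm{\sum a_nx_n(2)}^p$ with $\norm{a}_p=1$, the lower estimates combine, via $\delta_1^p+\delta_2^p\ge\delta^p$, to $\lmz_p(B)\ge(\delta_1^p/K_1^p+\delta_2^p/K_2^p)^{1/p}2^{-1/p}\ge\delta/(\kappa2^{1/p})$, while the upper estimates combine, via $t_1^p+t_2^p=1$, to $\umz_p(B)\ge(t_1^pK_1^p+t_2^pK_2^p)^{-1/p}\ge\kappa^{-1}$. Letting $\varepsilon,\varepsilon'\to0$ and invoking $K_0>\kappa$ yields $\lmz_p(B)\ge\delta/(K_02^{1/p})$ and $\umz_p(B)\ge1/K_0$, which is \ref{it:finite:strong}; the induction then propagates the property (with constant $\max_iK_i$).

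For \ref{it:finite:weak} I would run the same scheme qualitatively, replacing the Ramsey uniformization by the dichotomy of \Cref{lem:dichothomy} applied to each coordinate: the case where both coordinates converge is excluded by the $\delta$-separation of $A$, and on the remaining coordinates the Schur $p$-property of $\XX_i$ (separated case) or the gliding-hump splitting (convergent case) furnishes $\lmz_p(B)>0$ and $\umz_p(B)>0$, so $B$ is equivalent to the canonical $\ell_p$-basis. I expect the main obstacle to lie in the quantitative bookkeeping of \ref{it:finite:strong}: arranging the coordinatewise separations $\delta_i$ and limiting norms $t_i$ so that the single inequality $\delta_1^p+\delta_2^p\ge\delta^p$ and the normalization $t_1^p+t_2^p=1$ conspire, through the $p$-additivity of the sum quasi-norm, to reproduce exactly the constant $\kappa=\max\{K_1,K_2\}$; the residual estimate on a convergent coordinate — needed precisely because the upper $\ell_p$-bound $\umz_p>0$ is not automatic outside the locally $p$-convex range — is the other delicate point.
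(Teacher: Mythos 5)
Your proposal is correct and follows essentially the same route as the paper's proof: split the coordinates into convergent and uniformly separated ones via \Cref{lem:dichothomy} and Ramsey, apply \Cref{ekvivStrongPSchur} coordinatewise on the separated coordinates, control the convergent coordinates by a gliding-hump perturbation using \eqref{eq:1lessp} and local $q$-convexity, and combine everything through the $p$-additivity of the sum quasi-norm. The only differences are cosmetic: you reduce to two summands by induction and keep slightly sharper bookkeeping (obtaining the constant $\max_i K_i$ where the paper settles for $K(1+\rho^p)^{1/p}$), neither of which changes the substance of the argument.
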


\begin{proof}
Assume that $\XX$ is equipped with a continuous quasi-norm. Let $\rho$ be the diametral modulus of $\XX$, and let $\gamma$ and $q$ be as in \eqref{eq:lqc}.

To prove \ref{it:finite:strong}, we pick $1\le K<\infty$ such that $\XX_i$ has the $K$-strong Schur $p$-property for all $i\in\It$, $\delta>0$, and an infinite $\delta$-separated set $A\subset S_\XX$. Fix $\varepsilon\in (0,\delta)$ and a nonincreasing sequence $(\varepsilon_n)_{n=1}^\infty$ in $(0,\infty)$ such that $\lim_n \varepsilon_n=0$ and
\[
\enpar{\sum_{n=1}^\infty \abs{a_n}^q \varepsilon_n^q}^{1/q} \le \frac{\varepsilon}{\gamma} \enpar{\sum_{n=1}^\infty \abs{a_n}^p}^{1/p}, \quad (a_n)_{n=1}^\infty\in c_{00}.
\]

By \Cref{lem:dichothomy}, there are a sequence $(x_n)_{n=1}^\infty$ in $A$, $\Ft\subset \It$ and $y\in \XX_\Ft$ such that $(x_n(i))_{n=1}^\infty$ converges to $y(i)$ for all $i\in\Ft$, and $(x_n(i))_{n=1}^\infty$ is uniformly separated for all $i\in\Gt\coloneqq\It\setminus\Ft$. By infinite Ramsey's theorem, passing to a subsequence we find $(\delta_i)_{i\in\Gt}$ in $(0,\infty)$ and $(r_i)_{i\in\Gt}$ in $[0,\infty]$ such that
\begin{itemize}[leftmargin=*]
\item $\norm{(x_n-y)|_\Ft}\le \varepsilon_n$ for all $n\in\Nat$;
\item $\norm{x_n(i)}\in [r_i,\enpar{r_i^p +\varepsilon^p/\abs{\Gt}}^{1/p}]$ for all $i\in\Gt$ and $n\in\Nat$; and
\item $\norm{x_n(i)-x_m(i)}\in[\delta_i,\enpar{\delta_i^p +\varepsilon^p/\abs{\Gt}}^{1/p}]$ for all $i\in\Gt$ and all $n$, $m\in\Nat$ with $n\not=m$.
\end{itemize}
Set $\delta_0=\enpar{\sum_{i\in\Gt} \delta_i^p}^{1/p}$ and $r=\enpar{\sum_{i\in\Gt} r_i^p}^{1/p}$.
Pick $n$, $m\in\Nat$ with $n\not=m$. We have $1\ge \norm{x_n|_\Gt}\ge r$ and
\begin{align*}
\delta^p
&\le \norm{ (x_n-x_m)|_\Gt}^p + \norm{ (x_n-x_m)|_\Ft}^p\\
&\le \rho^p \max\enbrace{\norm{ (x_n-y)|_\Ft}^p, \norm{ (x_m-y)|_\Ft}^p} + \norm{ (x_n-x_m)|_\Gt}^p\\
&\le \rho^p \max\enbrace{\varepsilon_n^p, \varepsilon_m^p} +\delta_0^p + \varepsilon^p.
\end{align*}

Letting $m$ and $n$ tend to infinity, we obtain $\delta^p \leq \delta_0^p + \varepsilon^p$. Notice that this inequality implies $\Gt \neq \emptyset$.

By \Cref{ekvivStrongPSchur}, passing to another subsequence we infer that the set $B\coloneqq\enbrace{x_n \colon n\in\Nat}$ satisfies
\[
\lmz_p(B(i)) \ge \frac{ \enpar{\delta_i^p - \varepsilon^p/\abs{\Gt}}_+^{1/p}}{2^{1/p} K},
\quad
\umz_p(B(i)) \ge \frac{1}{K \enpar{r_i^p +\varepsilon^p/\abs{\Gt}}^{1/p}}
\]
for all $i\in\Gt$. Given $\alpha=(a_n)_{n=1}^\infty\in c_{00}$, using \eqref{eq:1lessp} we obtain
\begin{align*}
\norm{ \sum_{n=1}^\infty a_n x_n|_\Ft}
&\le \rho\max\enbrace{ \norm{ \sum_{n=1}^\infty a_n (x_n-y)|_\Ft} , \abs{\sum_{n=1}^\infty a_n} \norm{y}}\\
&\le \rho\max\enbrace{ \gamma\enpar{ \sum_{n=1}^\infty \abs{a_n}^q \varepsilon_n^q}^{1/q} ,\norm{\alpha}_p} \\
&\le \rho\max\enbrace{ \varepsilon ,1} \norm{\alpha}_p.
\end{align*}
Therefore,
\begin{align*}
\lmz_p(B) & \ge \frac{ \enpar{\delta_0^p - \varepsilon^p}_+^{1/p}}{2^{1/p} K} \ge \frac{ \enpar{\delta^p - 2\varepsilon^p}_+^{1/p}}{2^{1/p} K}, \\
\umz_p(B) & \ge \frac{1}{ K \enpar{r^p +\varepsilon^p + \rho^p \max\enbrace{ \varepsilon^p ,1}}^{1/p} }\ge
\frac{1}{ K \enpar{1+ \varepsilon^p + \rho^p \max\enbrace{ \varepsilon^p ,1}}^{1/p} }.
\end{align*}

Letting $\varepsilon$ tend to zero we obtain that
\[
\slmz_p(A) \ge \frac{\delta}{2^{1/p} K}, \quad \sumz_p(A) \ge \frac{1}{ K \enpar{1+\rho^p}^{1/p}}.
\]

The proof of \ref{it:finite:weak} is a replica of the proof of \ref{it:finite:strong} replacing in the appropriate places the estimates for $\lmz_p$, $\umz_p$, $\slmz_p$ and $\sumz_p$ by the fact that these quantities are positive.
\end{proof}

In order to study the Schur $p$-property of infinite sums we naturally have to restrict ourselves to $p$-Banach spaces. We start with a result inspired by \cite[Lemma 7.2]{KKS13}.

\begin{lemma}\label{ellPQuantitative}
Let $0<p\le 1$, $(\XX_i)_{i\in I}$ be a family of $p$-Banach spaces, and $A \subset \XX\coloneqq (\bigoplus_{i\in \It} \XX_i)_{\ell_p}$ be infinite. Then,
\begin{equation}\label{eq:lpQ}
I(A)\coloneqq \inf_{\Ft\in [\It]^{<\omega}} \sup_{x\in A} \norm{x|_{\It\setminus \Ft}}\leq \slmz_p(A).
\end{equation}

Moreover, if $A(i)\coloneqq\enbrace{x(i)\colon x\in A}$ is relatively compact for all $i\in I$, then
\[
\hsd(A) = \slmz_p(A) = I(A).
\]
\end{lemma}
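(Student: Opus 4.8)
The plan is to prove the two assertions separately, establishing the inequality $I(A)\le\slmz_p(A)$ by a gliding-hump extraction and then deducing the chain of equalities in the relatively compact case from this inequality together with \Cref{fact:scAndChi}.

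For the inequality, fix $0<c<I(A)$ (we may assume $I(A)>0$, the case $I(A)=0$ being void, and that $A$ is bounded). Choose a nonincreasing null sequence $(\varepsilon_k)_{k=1}^\infty$ in $(0,\infty)$ with $\sum_k\varepsilon_k^p$ as small as we please. I would build inductively an increasing sequence $(\Ft_k)_{k=0}^\infty$ in $[\It]^{<\omega}$, with $\Ft_0=\emptyset$, and points $x_k\in A$ as follows: given $\Ft_{k-1}$, the inequality $I(A)>c$ provides $x_k\in A$ with $\norm{x_k|_{\It\setminus\Ft_{k-1}}}>c$, and since $\sigma(x_k)\in\ell_p$ I then pick a finite $\Ft_k\supseteq\Ft_{k-1}$ with $\norm{x_k|_{\It\setminus\Ft_k}}<\varepsilon_k$. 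Writing $G_k\coloneqq\Ft_k\setminus\Ft_{k-1}$ and $y_k\coloneqq x_k|_{G_k}$, the vectors $(y_k)_{k=1}^\infty$ are disjointly supported and, because $x_k|_{\It\setminus\Ft_{k-1}}=y_k+x_k|_{\It\setminus\Ft_k}$ is a disjoint decomposition, $\norm{y_k}\ge\enpar{c^p-\varepsilon_k^p}^{1/p}$. The goal is to show that $B\coloneqq\enbrace{x_k\colon k\in\Nat}\subseteq A$ satisfies $\lmz_p(B)\ge c-o(1)$; letting first $\varepsilon_k\to0$ and then $c\uparrow I(A)$ would then give $\slmz_p(A)\ge I(A)$.

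The decisive estimate is a lower bound for $\norm{\sum_k a_k x_k}$. Restricting to the disjoint blocks and using that $\XX$ is a $p$-Banach space, $\norm{\sum_k a_k x_k}^p\ge\sum_m\norm{\enpar{\sum_k a_k x_k}|_{G_m}}^p$, and on the block $G_m$ this sum splits as $a_m y_m$, plus the tails $x_k|_{G_m}$ of the earlier vectors ($k<m$), which are controlled by $\varepsilon_k$, plus the fronts $x_k|_{G_m}$ of the later vectors ($k>m$). The diagonal term $a_m y_m$ together with the disjointness of the blocks would yield exactly the desired $\enpar{\sum_k\abs{a_k}^p\norm{y_k}^p}^{1/p}\ge(c^p-o(1))^{1/p}\norm{\alpha}_p$. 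The main obstacle is that the fronts of the later vectors, which a priori need not be small, may pile up on $G_m$ and cancel part of $a_m y_m$; I expect this to be the crux of the argument. I would resolve it by extracting a further subsequence along which, on every fixed block, the later vectors either shed their mass or share a common limiting value, so that their contributions \emph{align with} rather than cancel the diagonal terms. Making this precise in full generality is the delicate point; in the relatively compact case it becomes clean, since the bounded blockwise families $x_l|_{G_m}$ then have convergent subsequences whose limits can be subtracted off and absorbed into a single coherent term.

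For the ``moreover'' part it suffices to prove $\hsd(A)\le I(A)$, since the reverse chain $I(A)\le\slmz_p(A)\le\hsd(A)$ already follows from the inequality just discussed and from \Cref{fact:scAndChi} (here $\rho(\XX)=2^{1/p}$). To bound $\hsd(A)$, fix $c>I(A)$ and $\varepsilon>0$ and choose a finite $\Ft\subseteq\It$ with $\sup_{x\in A}\norm{x|_{\It\setminus\Ft}}<c$. Since each $A(i)$ is relatively compact and $\Ft$ is finite, the projection $A|_\Ft$ is relatively compact in $\XX_\Ft$, so it admits a finite $\varepsilon$-net $F_0$. Lifting $F_0$ by $L_\Ft$ and using the disjoint decomposition $x=L_\Ft(x|_\Ft)+x|_{\It\setminus\Ft}$, every $x\in A$ lies within $\enpar{\varepsilon^p+c^p}^{1/p}$ of the finite set $L_\Ft(F_0)$, whence $\hsd(A)\le\enpar{\varepsilon^p+c^p}^{1/p}$. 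Letting $\varepsilon\to0$ and $c\downarrow I(A)$ gives $\hsd(A)\le I(A)$, completing the chain of equalities.
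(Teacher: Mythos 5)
Your ``moreover'' part and your reduction of the equalities to the single inequality $\hsd(A)\le I(A)$ coincide with the paper's argument and are fine. The problem is in the main inequality $I(A)\le\slmz_p(A)$, where your proof has a genuine gap at exactly the point you flag yourself: after selecting $x_k$ with $\norm{x_k|_{\It\setminus\Ft_{k-1}}}>c$ and a block $G_k=\Ft_k\setminus\Ft_{k-1}$ absorbing almost all of that mass, you have no control whatsoever on the traces $x_k|_{G_m}$ for $m<k$, and these can indeed cancel the diagonal terms $a_m y_m$ inside the (possibly badly non-convex) space $\XX_{G_m}$. Your proposed remedy --- passing to a subsequence so that these traces ``align with rather than cancel'' the diagonal --- is not an argument: without relative compactness of the coordinate sets $A(i)$ (which the first assertion does not assume) there is no convergent sub-structure to extract, and even with it the limiting traces need not align with $y_m$. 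A smaller issue: ``we may assume $A$ is bounded'' is not a legitimate reduction, since passing to a bounded infinite subset can decrease $I$, and $A$ need not contain any bounded infinite subset at all; fortunately boundedness is never actually needed.

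The paper closes precisely this gap by a different, order-reversed choice. From the definition of $I(A)$ as an infimum one first fixes a \emph{single} finite set $\Ft_0$ with $\sup_{x\in A}\norm{x|_{\It\setminus\Ft_0}}<I(A)+\varepsilon$, and only then runs the gliding hump outside $\Ft_0$, choosing $x_i$ and $\Ft_i\supset\Ft_{i-1}\supset\Ft_0$ so that the block $\Ft_i\setminus\Ft_{i-1}$ carries norm at least $I(A)-\varepsilon$. Since the $p$-th power of the norm is additive over disjoint coordinates, the entire residue of $x_i$ on $\It\setminus\Ft_0$ outside its own block --- which includes in particular its trace on every other block, earlier or later --- has $p$-mass at most $\enpar{I(A)+\varepsilon}^p-\enpar{I(A)-\varepsilon}^p\le(2\varepsilon)^p$. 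Thus $\sum_i\lambda_i x_i$ differs from the disjointly supported sum $\sum_i\lambda_i\,x_i|_{\Ft_i\setminus\Ft_{i-1}}$ by at most $2\varepsilon\norm{\lambda}_p$ on $\It\setminus\Ft_0$, and no cancellation issue arises. The lesson is that the upper bound built into $I(A)$ is not a nuisance to be discarded but the very mechanism that kills the ``fronts'' you could not control; your construction only exploited the lower-bound half of the definition.
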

\begin{proof}
To prove \eqref{eq:lpQ} we can assume that $I(A)>0$. Pick $0<\varepsilon< I(A)$. By definition, there exists $\Ft_0\in[\It]^{<\omega}$ such that $\norm{x|_{I\setminus \Ft_0}}< I(A)+\varepsilon$ for all $x\in A$, and for each $\Ft\in[I]^{<\omega}$ there is $y\in A$ such that $\norm{y|_{\It\setminus \Ft}}>I(A)-\varepsilon$.

We recursively find $(x_i)_{i=1}^\infty$ in $A$ and $(\Ft_i)_{i=1}^\infty$ in $[\It]^{<\omega}$ as follows. Assuming that $\Ft_{i-1}$ has been constructed, we pick $x_i$ such that
\[
\norm{ x_i|_{\It\setminus \Ft_{i-1}}} > I(A)-\varepsilon.\]
Then, we choose $\Ft_i \supset \Ft_{i-1}$ large enough so that
\[
\norm{x_i|_{\Ft_i\setminus \Ft_{i-1}}} >I(A)-\varepsilon.
\]
For each $i\in\Nat$ we have
\begin{multline*}
\norm{x_i|_{\It\setminus \Ft_0}-x_i|_{\Ft_i\setminus \Ft_{i-1}}}^p
=\norm{x_i|_{\It\setminus \Ft_0}}^p-\norm{x_i|_{\Ft_i\setminus \Ft_{i-1}}}^p\\
\le \enpar{I(A)+\varepsilon}^p - \enpar{I(A)-\varepsilon}^p\le (2\varepsilon)^p.
\end{multline*}

Pick $\lambda\coloneqq(\lambda_i)_{i=1}^\infty\in c_{00}$ and set $x\coloneqq\sum_{i=1}^\infty\lambda_i x_i$. We estimate
\[
\norm{x|_{\It\setminus \Ft_0} - \sum_{i=1}^\infty \lambda_i \, x_i|_{\Ft_i\setminus \Ft_{i-1}}}^p
\le \sum_{i=1}^\infty \lambda_i^p \norm{x_i|_{\It\setminus \Ft_0}-x_i|_{\Ft_i\setminus \Ft_{i-1}}}^p
\le (2\varepsilon)^p \norm{\lambda}_p^p.
\]
Therefore,
\begin{multline*}
\norm{x}^p
\ge \norm{x|_{\It\setminus \Ft_0}}^p\ge \norm{ \sum_{i=1}^\infty \lambda_i \, x_i|_{\Ft_i\setminus \Ft_{i-1}}}^p- (2\varepsilon)^p \norm{\lambda}_p^p\\
= \sum_{i=1}^\infty \lambda_i^p \norm{ x_i|_{\Ft_i\setminus \Ft_{i-1}}}^p - (2\varepsilon)^p \norm{\lambda}_p^p \ge \enpar{ (I(A)-\varepsilon)^p- (2\varepsilon)^p} \norm{\lambda}_p^p,
\end{multline*}
so that
\[
\slmz_p^p(A)\ge (I(A)-\varepsilon)^p- (2\varepsilon)^p.\] Letting $\varepsilon$ tend to zero, we are done.

By \Cref{fact:scAndChi}, to complete the proof, it suffices to prove that if $A(i)$ is relatively compact for all $i\in I$, then $\hsd(A)\leq I(A)$. Pick $c>c_0>I(A)$ and set $\varepsilon\coloneqq(c^p-c_0^p)^{1/p}$. There is $F\subset I$ finite such that
\[
\norm{x|_{\It\setminus \Ft}}\le c_0, \quad x\in A.
\]

We observe that $A|_\Ft$ is a relatively compact subset of $\XX_\Ft$. Therefore, there exists a finite $\varepsilon$-net for $A|_\Ft$, that is, there is $G\subset \XX_\Ft$ finite such that $\widehat{d}(A|_\Ft,G) < \varepsilon$. Given $x\in A$, there exists $g\in G$ with $\norm{x|_\Ft -g}< \varepsilon$. Hence,
\[
\norm{x-L_\Ft(g) }^p = \norm{x|_\Ft - g}^p + \norm{x_{\It\setminus \Ft}}^p < \varepsilon^p + c_0^p = c^p.
\]

This implies that $\widehat{d}(A,L_\Ft(G))\leq c$, whence $\hsd(A)\leq c$. Letting $c$ tend to $I(A)$ finishes the proof.
\end{proof}

\begin{theorem}\label{thm:schurPreservedByEllpsums}
Let $0<p\le 1$ and $(\XX_i)_{i\in \It}$ be a family of $p$-Banach spaces with the Schur $p$-property. Then $\XX\coloneqq (\oplus_{i\in \It} \XX_i)_{\ell_p}$ has the Schur $p$-property.
\end{theorem}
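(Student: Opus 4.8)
The plan is to verify the characterization of the Schur $p$-property in terms of $\slmz_p$ and $\sumz_p$: I must show that for every infinite, bounded, uniformly separated set $A\subset\XX$ one has $\slmz_p(A)>0$ and $\sumz_p(A)>0$. The second inequality is automatic, because $\XX$ is a $p$-Banach space and $A$ is bounded, so by \eqref{eq:smzpC} we have $\sumz_p(A)=1/\limsup_{x\in A}\norm{x}>0$. Thus the entire content is the lower $\ell_p$-estimate $\slmz_p(A)>0$, and I would organize it as a dichotomy governed by the quantity $I(A)$ from \Cref{ellPQuantitative}. After passing to a countable infinite subset of $A$ and discarding the coordinates outside its support, I may assume $\It$ is countable; let $\delta\coloneqq\delta(A)>0$ be the separation constant and $M\coloneqq\sup_{x\in A}\norm{x}<\infty$.

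If $I(A)>0$, then \Cref{ellPQuantitative} gives at once $\slmz_p(A)\ge I(A)>0$; this is the ``spreading'' regime, where the vectors keep a definite amount of mass near infinitely many coordinates and the $\ell_p$-behaviour is produced by the disjointly supported gliding humps built into that lemma. The interesting case is $I(A)=0$, the ``concentration'' regime. Here I would fix $\varepsilon>0$ with $2\varepsilon^p<\delta^p$ and use $I(A)=0$ to choose a \emph{finite} set $\Ft\subset\It$ with $\sup_{x\in A}\norm{x|_{\It\setminus\Ft}}<\varepsilon$. Since $\XX$ is a $p$-Banach $\ell_p$-sum, for distinct $x,y\in A$ the $p$-th powers of norms add over coordinates, so
\[
\norm{(x-y)|_\Ft}^p=\norm{x-y}^p-\norm{(x-y)|_{\It\setminus\Ft}}^p\ge \delta^p-2\varepsilon^p>0 .
\]
Hence $x\mapsto x|_\Ft$ is injective on $A$ and $A|_\Ft$ is an infinite, bounded, $(\delta^p-2\varepsilon^p)^{1/p}$-separated subset of $\XX_\Ft=(\bigoplus_{i\in\Ft}\XX_i)_{\ell_p}$.

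Now $\XX_\Ft$ is a finite $\ell_p$-sum of $p$-Banach spaces, each of which has the Schur $p$-property, so by the stability under finite $\ell_p$-sums established earlier (\ref{it:finite:weak}) the space $\XX_\Ft$ has the Schur $p$-property. Therefore there is an infinite $C\subset A|_\Ft$ with $\lmz_p(C)>0$. Finally I would transfer this estimate back to $\XX$: choosing $B\subset A$ with $B|_\Ft=C$ and using the monotonicity $\norm{z}\ge\norm{z|_\Ft}$ valid in any $\ell_p$-sum together with $\enpar{\sum_{x\in B}a_x x}|_\Ft=\sum_{x\in B}a_x\,(x|_\Ft)$, I obtain $\lmz_p(B)\ge\lmz_p(C)>0$, whence $\slmz_p(A)>0$.

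The main obstacle is this second case, and specifically the passage from the infinite sum to a finite sub-sum. The delicate points are that truncating the uniformly small tail must preserve uniform separation (which is what forces the quantitative choice $2\varepsilon^p<\delta^p$ and relies on the exact additivity of the $p$-th powers of the coordinate norms in an $\ell_p$-sum), and that the lower $\ell_p$-estimate found on the finite block must lift back to the full vectors (which uses monotonicity of the norm under coordinate restriction). Once the problem has been localized to finitely many coordinates in this way, the previously proved finite-sum stability of the Schur $p$-property finishes the argument.
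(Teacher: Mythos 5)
Your proof is correct and follows essentially the same route as the paper: the same dichotomy on $I(A)$ from \Cref{ellPQuantitative}, with the spreading case handled identically and the concentration case reduced to finitely many coordinates by the same truncation estimate (the paper uses the threshold $3^{-1/p}\delta$ where you use $2\varepsilon^p<\delta^p$, which is immaterial). The only difference is the final step: where you invoke the finite-sum stability theorem for $\XX_\Ft$, the paper instead pigeonholes, via the bound $\max_{i\in\Ft}\norm{x(i)-y(i)}\ge\abs{\Ft}^{-1/p}3^{-1/p}\delta$ and infinite Ramsey's theorem, down to a single coordinate $i\in\Ft$ on which an infinite subset is uniformly separated, and then applies the Schur $p$-property of that one $\XX_i$ directly.
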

\begin{proof}
Let $A\subset \XX$ be a bounded and uniformly $\delta$-separated, $\delta>0$. We shall prove that $\slmz_p(A)>0$.

Let $I(A)$ be as is \Cref{ellPQuantitative}. If $I(A)>0$, we are done. Otherwise, there is $\emptyset\neq\Ft\subset \It$ finite such that $\norm{x|_{\It\setminus \Ft}}\le 3^{-1/p} \delta$ for all $x\in A$. Then, for all $x$, $y\in A$ with $x\not=y$,
\begin{multline*}
\norm{(x-y)|_\Ft}^p
= \norm{x-y}^p - \norm{(x-y)|_{\It\setminus \Ft}}^p \\
\ge \norm{x-y}^p - \norm{x|_{\It\setminus \Ft}}^p- \norm{y|_{\It\setminus \Ft}}^p
\ge \frac{1}{3}\delta^p,
\end{multline*}
whence $\max_{i\in \Ft} \norm{x(i)-y(i)}\ge \delta_0\coloneqq\abs{\Ft}^{-1/p} 3^{-1/p} \delta$. 

By infinite Ramsey's theorem, there exist $B\subset A$ infinite and $i\in \Ft$ such that $B(i)$ is $\delta_0$-separated. Therefore, $\slmz_p(B(i))>0$. Since $\slmz_p(A)\ge \slmz_p(B)\ge \slmz_p(B(i))$, we are done.
\end{proof}

The result concerning preservation of the strong Schur $p$-property under $\ell_p$-sums seems to be new even for the case of $p=1$. A similar result, in the setting of Banach spaces and involving a different quantitative version of the Schur property, was very recently and independently discovered in \cite[Theorem 7.4]{Kalenda25}.

\begin{theorem}
Let $(\XX_i)_{i\in \It}$ be a family of $p$-Banach spaces with the $K$-strong Schur $p$-property, where $p\in (0,1]$ and $K\geq 1$. Then $\XX\coloneqq (\oplus_{i\in \It} \XX_i)_{\ell_p}$ has the $K$-strong Schur $p$-property.
\end{theorem}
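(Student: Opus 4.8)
The plan is to reduce everything to a lower estimate for $\slmz_p(A)$ on normalized separated sets and then to combine the two mechanisms that produce $\ell_p$-behaviour in an infinite $\ell_p$-sum: mass escaping to infinity along the index set $\It$ (measured by $I(A)$ from \Cref{ellPQuantitative}) and genuine separation inside a fixed finite block of coordinates (handled by the finite-sum theorem). Since $\XX$ is a $p$-Banach space, for every infinite $B\subset A\subset S_\XX$ we have $\umz_p(B)=1/\sup_{x\in B}\norm{x}=1$, so the upper half of \Cref{df:SSchur} holds trivially with constant $1\le K$. Hence it suffices to prove that $\slmz_p(A)\ge\delta/(2^{1/p}K)$ for every infinite $\delta$-separated $A\subset S_\XX$; once this is shown, for each $K_0>K$ the definition of $\slmz_p$ produces an infinite $B\subset A$ with $\lmz_p(B)>\delta/(2^{1/p}K_0)$ and $\umz_p(B)=1\ge1/K_0$.

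Set $t\coloneqq I(A)$ and fix a small $\varepsilon>0$. First I would choose a finite $\Ft_0\subset\It$ with $\sup_{x\in A}\norm{x|_{\It\setminus\Ft_0}}<t+\varepsilon$, and then, running the gliding-hump construction from the proof of \Cref{ellPQuantitative} (possible because $\sup_{x\in A}\norm{x|_{\It\setminus\Ft}}\ge t=I(A)$ for every finite $\Ft$), extract a sequence $(x_n)_{n=1}^\infty$ in $A$ whose tails $x_n|_{\It\setminus\Ft_0}$ agree, in quasi-norm up to $2\varepsilon$, with disjointly supported vectors of quasi-norm in $[t-\varepsilon,t+\varepsilon]$. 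Passing to a further subsequence (infinite Ramsey together with \Cref{lem:Cuth} to normalize, exactly as in \Cref{ekvivStrongPSchur}), I may also assume the finite-block projections $x_n|_{\Ft_0}$ have quasi-norms in a tiny interval. The point of this preparation is the exact $p$-additive splitting valid in an $\ell_p$-sum of $p$-Banach spaces: for $\alpha=(a_n)\in c_{00}$,
\[
\norm{\sum_n a_n x_n}^p=\norm{\sum_n a_n\,x_n|_{\Ft_0}}^p+\norm{\sum_n a_n\,x_n|_{\It\setminus\Ft_0}}^p .
\]

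For the tail term, disjointness of the humps gives $\norm{\sum_n a_n\,x_n|_{\It\setminus\Ft_0}}^p\ge\enpar{(t-\varepsilon)^p-(2\varepsilon)^p}\norm{\alpha}_p^p$, a contribution with the optimal constant $1$. For the finite-block term, $B\coloneqq\{x_n\colon n\in\Nat\}$ has $B|_{\Ft_0}$ bounded and $\sigma$-separated in $\XX_{\Ft_0}=(\bigoplus_{i\in\Ft_0}\XX_i)_{\ell_p}$, where $\sigma^p\ge\delta^p-2(t+\varepsilon)^p$ because $\norm{(x_n-x_m)|_{\Ft_0}}^p=\norm{x_n-x_m}^p-\norm{(x_n-x_m)|_{\It\setminus\Ft_0}}^p$. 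Since each $\XX_i$ ($i\in\Ft_0$) has the $K$-strong Schur $p$-property, the finite-sum theorem applies to $\XX_{\Ft_0}$, and its lower estimate yields $\lmz_p(B|_{\Ft_0})\ge\sigma/(2^{1/p}K)$ after the normalization step. Feeding both contributions into the displayed identity gives $\lmz_p(B)^p\ge\frac{1}{2K^p}\enpar{\delta^p-2(t+\varepsilon)^p}_++(t-\varepsilon)^p-(2\varepsilon)^p$, and letting $\varepsilon\to0$ I obtain $\slmz_p(A)^p\ge\frac{1}{2K^p}\enpar{\delta^p-2t^p}_++t^p\ge\frac{\delta^p}{2K^p}$; here the cases $\delta^p\ge2t^p$ and $\delta^p<2t^p$ are checked separately, the first using $K\ge1$ to absorb $t^p(1-K^{-p})\ge0$ and the second using $t^p>\delta^p/2$.

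The main obstacle is constant-tracking, not geometry. The finite-sum theorem only guarantees that $\XX_{\Ft_0}$ is $K'$-strong Schur $p$ for $K'=K\,3^{1/p}>K$, the degradation coming entirely from its $\sumz$-estimate; applying it as a black box would propagate this loss and spoil the claimed constant $K$. The crux is therefore to invoke the finite-sum theorem only through its lower ($\slmz$) estimate, which carries the \emph{undeformed} constant $K$, while the unavoidable $\sumz$-loss is rendered harmless because the upper half is automatic for normalized sets in a $p$-Banach space. Equally essential is that the escaping-mass term contributes with constant $1$ rather than $1/(2K^p)$: this surplus, together with $K\ge1$, is exactly what lets the two pieces recombine into the sharp bound $\delta/(2^{1/p}K)$.
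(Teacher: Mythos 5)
Your proof is correct and follows the same overall strategy as the paper's: reduce to a lower bound on $\slmz_p$ (the $\umz_p$ half being automatic for normalized sets in a $p$-Banach space by \eqref{eq:smzpC}), split the coordinates into a finite block handled by the components' strong Schur $p$-property with the undegraded constant $K$, and a disjointly supported tail contributing with constant $1$, then recombine using $K\ge 1$. The implementations differ in two respects, so a comparison is worthwhile. For the tail, you quantify the escaping mass of the \emph{vectors} via $I(A)$ from \Cref{ellPQuantitative} and reuse its gliding-hump construction, whereas the paper quantifies the escaping part of the \emph{differences}: it extracts coordinatewise Ramsey limits $r_i$, $\delta_i$, sets $\delta_0^p=\sum_i\delta_i^p$, and shows (Claim C) that each hump has norm at least $2^{-1/p}\enpar{\delta^p-\delta_0^p-\varepsilon^p}_+^{1/p}$. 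Your worst-case inequality $\sigma^p\ge\delta^p-2t^p$ plays the role of the paper's $\delta_0^p+\eta^p\ge\delta^p-\varepsilon^p$, and both recombinations close with the same use of $K\ge1$: your $\tfrac{1}{2K^p}\enpar{\delta^p-2t^p}_++t^p\ge\tfrac{\delta^p}{2K^p}$ versus the paper's $\tfrac{\delta_0^p}{2K^p}+\tfrac{\eta^p}{2}\ge\tfrac{\delta^p}{2K^p}$. For the block, you invoke the finite-sum theorem as a black box, whereas the paper applies \Cref{ekvivStrongPSchur} coordinate by coordinate (its Claim B) and sums the resulting $\lmz_p^p(B(i))$ by exact $p$-additivity. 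Your route forces you to confront the constant-tracking issue you correctly identify, and your resolution is right: for normalized sets in a $p$-Banach space $\sumz_p$ equals $1$, so the finite sum genuinely has the $K$-strong (not merely $3^{1/p}K$-strong) Schur $p$-property, and \Cref{ekvivStrongPSchur} then transfers the $\slmz_p$ bound to the non-normalized set $B|_{\Ft_0}$ with the constant $K$ intact. The only loose ends are routine: since $\slmz_p$ is a supremum, the block extraction yields $\lmz_p\ge\sigma/(2^{1/p}K)-\varepsilon'$ on an infinite subset rather than the exact bound, and one should note explicitly that the tail estimate survives passage to that subset because disjointness of the humps is hereditary; neither affects the conclusion.
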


\begin{proof}
Let $\delta>0$ and $A\subset S_\XX$ be an infinite normalized $\delta$-separated set. Without loss of generality, we may assume that $A$ is countable and that $\It=\Nat$. By the small perturbation technique, we can also assume that every $x \in A$ is finitely supported. Let us enumerate the vectors of $A$ as $\enbrace{x_k \colon k\in\Nat}$.

Pick $0<\varepsilon<\delta$ and $(\varepsilon_i)_{i\in\It}$ in $(0,\infty)$ with $\sum_{i=1}^\infty \varepsilon_i^p =\varepsilon^p$. By infinite Ramsey's theory and Cantor's diagonal method, passing to a subsequence we may assume that there are $(\delta_i)_{i\in \It}$ and $(r_i)_{i\in \It}$ in $[0,\infty)$ such that
\begin{itemize}[leftmargin=*]
\item $ \delta_i \le \norm{x_k(i)-x_n(i) }\le \enpar{\delta_i^p+ \varepsilon_i^p}^{1/p}$ provided that $n>k\ge i$; and
\item $r_i\le \norm{x_k(i)}\le \enpar{r_i^p+ \varepsilon_i^p}^{1/p}$ provided that $k\ge i$.
\end{itemize}

Let us record three facts that gather the properties of $A$ that we will use.

\begin{claim}\label{claim:A}
$R\coloneqq\enpar{\sum_{i\in\It} r_i^p}^{1/p}\le 1$.
\end{claim}
Indeed, by Fatou's Lemma,
\[
R^p= \sum_{i=1}^\infty \lim_{k\to\infty} \norm{x_k(i)}^p \le \liminf_{k\to \infty} \sum_{i=1}^\infty \norm{x_k(i)}^p = 1.
\]

Set $\It'\coloneqq\{i\in\It \colon \delta_i>0\}$ and, given $\Ft\in[\It]^{<\omega}$,
\[
\delta_\Ft\coloneqq\enpar{\sum_{i\in\Ft} \delta_i^p}^{1/p}.
\]

\begin{claim}\label{claim:B}
For all $D\subset A$ infinite and all $\Ft\in[\It]^{<\omega}$ with $\Ft\cap\It'\not=\emptyset$ the map $x\mapsto x|_{\Ft}$, $x\in D$, is one-to-one, and there is $B\subset D$ infinite such that
\[
\lmz_p(B|_\Ft) \ge \frac{( \delta_\Ft^p -\varepsilon^p)_+^{1/p}}{2^{1/p} K}.
\]
\end{claim}
Indeed, let $(i_n)_{n=1}^N$ an enumeration of $\Ft'=\Ft\cap\It'$. Starting with $B_0=D$, we use \Cref{ekvivStrongPSchur} to recursively construct infinite sets $(B_{n})_{n=0}^N$ such that $B_{n}\subset B_{n-1}$ and
\[
\slmz_p(B_{n}(i_n)) \ge \frac{(\delta_{i_n}^p-\varepsilon_n^p)_+^{1/p}}{K 2^{1/p}}
\]
for all $i=1$, \dots, $N$. Set $B\coloneqq B_N$. We have
\[
\lmz_p^p(B|_\Ft) \ge \sum_{i\in\Ft'} \lmz_p^p(B(i)) \ge \frac{\delta^p_\Ft-\varepsilon^p}{2 K^p }.
\]

Note that \Cref{claim:B} implies that
\begin{equation}\label{eq:finitedelta}
\delta_0\coloneqq\enpar{\sum_{i\in\It} \delta_i^p}^{1/p}\le 2^{1/p} K<\infty.
\end{equation}
For convenience, we set $\eta\coloneqq\enpar{\delta^p-\delta_0^p-\varepsilon^p}_+^{1/p}$

\begin{claim}\label{claim:C}
For all $B\subset A$ infinite and all $\Ft\in[\It]^{<\omega}$, there is $x\in B$ such that
\[
\norm{x|_{\It\setminus\Ft}}\ge \frac{\eta}{2^{1/p}} , \quad\text{and}\quad \sup_{i\in\Ft} \norm{x(i)} \le \enpar{ r_i^p+\varepsilon_i^p}^{1/p}.
\]
\end{claim}

Indeed, the set $\It_\Ft\coloneqq\{k\in\Nat \colon k\ge \max(\Ft), \,x_k\in B\}$ is infinite. If $m$ and $n$ are distinct elements of $\It_\Ft$,
\begin{align*}
\norm{x_m|_{\It\setminus\Ft}}^p + \norm{x_n|_{\It\setminus\Ft}}^p
&\ge \norm{(x_m-x_n)|_{\It\setminus\Ft}}^p\\
&=\norm{x_m-x_n}^p-\sum_{i\in\Ft} \norm{(x_m(i)-x_n(i)}^p\\
& \ge \delta^p-\sum_{i\in\Ft} \enpar{\delta_i^p+\varepsilon_i^p}\\
&\ge \delta^p-\sum_{i\in\It} \enpar{\delta_i^p+\varepsilon_i^p} =\delta^p-\delta_0^p-\varepsilon^p.
\end{align*}
Hence, there is $k\in\{m,n\}$ such that $ \norm{x_k|_{\It\setminus\Ft}}\ge 2^{-1/p} \eta$.

Now, we are in a position to complete the proof. By \Cref{claim:A} and \eqref{eq:finitedelta}, let $\Ft_0\in[\It]^{<\omega}$ be such that
\[
\sum_{i\in\It\setminus \Ft_0} r_i^p \le \varepsilon^p, \quad \delta_{\Ft_0}^{p} \ge \delta_0^p-\varepsilon^p.
\]

By \Cref{claim:B}, there is $B\subset A$ infinite such that
\[
\lmz_p(B|_{\Ft_0}) \ge \frac{( \delta_0^p - 2 \varepsilon^p)_+^{1/p}}{2^{1/p} K}.
\]

Set $y_0=0$. We recursively construct $(y_k)_{k=1}^\infty$ in $B$ and $(\Ft_k)_{k=1}^\infty$ in $[\It]^{<\omega}$. Fix $k\in\Nat$ and assume that $y_{k-1}$ and $\Ft_{k-1}$ have been constructed.

We use \Cref{claim:C} to find $y_k\in B$ such that
\[
\norm{y_k|_{\It\setminus\Ft_{k-1}}}\ge \frac{\eta}{2^{1/p}}, \quad \sup_{i\in\Ft_{k-1}} \norm{ y_k(i)} \le \enpar{ r_i^p+\varepsilon_i^p}^{1/p}.
\]
Next choose $\Ft_k=\supp(y_k)\cup \Ft_{k-1}$.

Given $\alpha=(a_n)_{n=1}^\infty\in c_{00}$ we set
\[
y_\alpha\coloneqq\sum_{k=1}^\infty a_k \, y_k, \quad
u_\alpha\coloneqq y_\alpha|_{\Ft_0} + \sum_{k=1}^\infty a_k \, y_k|_{\Ft_k\setminus \Ft_{k-1}} , \quad v_\alpha \coloneqq\sum_{k=1}^\infty a_k \, y_k|_{\Ft_{k-1}\setminus \Ft_0},
\]
so that $y_\alpha=u_\alpha+v_\alpha$. On one hand,
\[
\norm{u_\alpha}^p = \norm{y_\alpha|_{\Ft_0}}^p + \sum_{k=1}^\infty \abs{a_k}^p \norm{y_k|_{\Ft_k\setminus \Ft_{k-1}}}^p
\ge \frac{\delta_0^p-2\varepsilon^p}{2 K^p} \norm{\alpha}_p^p + \frac{\eta^p}{2} \norm{\alpha}_p^p.
\]
On the other hand,
\[
\norm{v_\alpha}^p
\le\sum_{k=1}^\infty \abs{a_k}^p \sum_{i\in\Ft_{k-1}\setminus\Ft_0}\norm{y_k(i)}^p
\le \norm{\alpha}_p^p \sum_{i\in\It\setminus\Ft_0} r_i^p+\varepsilon_i^p
\le 2 \varepsilon^p \norm{\alpha}_p^p.
\]
Summing up, since $K\ge 1$,
\[
\norm{y_\alpha}^p \ge \norm{u_\alpha}^p- \norm{v_\alpha}^p \ge \enpar{ \frac{\delta^p-3\varepsilon^p}{2K^p}-2\varepsilon^p} \norm{\alpha}_p^p.
\]
We have proved that the set $E\coloneqq\{y_k \colon k\in \Nat\}$ satisfies
\[
\lmz_p(E) \ge c(\varepsilon,\delta,K,p)\coloneqq\enpar{ \frac{\delta^p-3\varepsilon^p}{2K^p}-2\varepsilon^p}_+^{1/p}.
\]
Therefore, $\slmz_p(A)\ge \lim_{\varepsilon\to 0^+} c(\varepsilon,\delta,K,p) =2^{-1/p} K^{-1} \delta$.
\end{proof}

We close this section by exhibiting that the strong Schur $p$-property is stronger than the Schur $p$-property.

\begin{example}
Given $0<\tau\le 1$, $0<p\le 1$, and $\alpha=(a_n)_{n=1}^\infty\in\FF^{\Nat}$, put
\[
\norm{\alpha}_{p,\tau}=\max\enbrace{ \tau \norm{\alpha}_p, \sup_{n\in\Nat} \abs{ \sum_{k=1}^n a_k}}.
\]
By \eqref{eq:1lessp}, $\tau\norm{\alpha}_p \le \norm{\alpha}_{p,\tau} \le \norm{\alpha}_p$. Hence, $\norm{\cdot}_{p,\tau}$ is a $p$-norm on $\ell_p$, and $\XX_{p,\tau}\coloneqq (\ell_p,\norm{\cdot}_{p,\tau})$ is just a renorming of $(\ell_p,\norm{\cdot}_p)$. Therefore, $\XX_{p,\tau}$ is a $p$-Banach space, and there is a constant $K\in(0,\infty)$ such that $\XX_{p,\tau}$ has the $K$-strong Schur $p$-property. We denote by $K_{p,\tau}$ the optimal such $K$.

The unit vector system $(\ee_n)_{n=1}^\infty$ regarded as a basis of $\XX_{p,\tau}$ is a spreading model, that is, it is isometrically equivalent to all its subsequences. Thus if we set
\[
E\coloneqq\{\ee_n \colon n\in\Nat\},
\]
then $\slmz_p(E)=\lmz_p(E)$. Put $\alpha_N=\sum_{j=1}^N (-1)^n \ee_n$ for all $N\in\Nat$. We have
\[
\norm{\alpha_N}_{p}=N^{1/p}, \quad \norm{\alpha_N}_{p,\tau}=\delta_{p,\tau,N}\coloneqq\max\{\tau N^{1/p},1\}, \quad N\in\Nat.
\]
In particular, $E$ is $\delta_{p,\tau,2}$-separated. Hence,
\begin{multline*}
K_{p,\tau} \ge \lmz_p^{{-1}}(E) \frac{\delta_{p,\tau,2}}{2^{1/p}}
\ge \sup_{N\in\Nat} \frac{ \norm{\alpha_N}_p}{\norm{\alpha_N}_{p,\tau}} \frac{\delta_{p,\tau,2}}{2^{1/p}}\\
=\sup_N \frac{\max\{\tau, 2^{-1/p}\} }{\max\{\tau , N^{-1/p}\} }
=\max\enbrace{1, \frac{1} {\tau 2^{1/p} }}.
\end{multline*}

Therefore, $\lim_{\tau\to 0^+} K_{p,\tau} =\infty$. Since $\XX\coloneqq\enpar{\bigoplus_{i=1}^\infty \XX_{p,1/i}}_{\ell_p}$ contains an isometric copy of $\XX_{p,1/i}$ for all $i\in\Nat$, $\XX$ fails to have the strong Schur $p$-property. In contrast, $\XX$ is a Schur $p$-space by \Cref{thm:schurPreservedByEllpsums}.
\end{example}
\section{Applications to Lipschitz free \texorpdfstring{$p$}{}-spaces for \texorpdfstring{$p\le 1$}{}}\label{free}\noindent
Let $0<p\le 1$. Given a $p$-metric space $\Mt$ with a distinguished point $0$, there is a (real) $p$-Banach space $\F_p(\Mt)$ and and isometric embedding $\delta_{\Mt}\colon \Mt \to \F_p(\Mt) $ such that for every $p$-Banach space $\XX$ and every Lipschitz map $f\colon\Mt\rightarrow \XX$ with $f(0) = 0$ there exists a unique linear bounded map $T_{f}\colon\F_{p}(\Mt)\rightarrow \XX$ with $T_{f} \circ \delta_{\Mt} = f$.

Pictorially,
\[
\xymatrix{\Mt \ar[rr]^f \ar[dr]_{\delta_{\Mt}} & & \XX.\\
& \F_{p}(\Mt) \ar[ur]_{T_{f}} &}
\]

Moreover, $\norm{T_f} =\norm{f}_{\Lip}$. The space $\F_p(\Mt)$ does not essentially depend on the chosen distinguished point, so we will not mention it unless necessary. We refer the reader to \cite{AACD2018} for the basics of these relatives of the Lipschitz free spaces (also known in the literature as transportation cost spaces or Arens-Eells spaces), which were introduced in \cite{AlbiacKalton2009}. 

The authors of \cite{AACD2018} proved that $\F_p(\Mt)$ is finite-dimensional if and only if $\Mt$ is finite. Later on, they proved in \cite{AACD2020b} that $\ell_p$ embeds in $\F_p(\Mt)$ provided that $\Mt$ is infinite. So, if $\Mt$ is infinite and $q\not=p$, then $\F_p(\Mt)$ fails to have the Schur $q$-property. In this section, we propose the Schur $p$-property as a tool to study the geometry of Lipschitz free $p$-spaces over infinite $p$-metric spaces. We start with a decomposition theorem. First, we introduce some terminology.

Given a topological space $\Mt$ and an ordinal $\alpha$ we denote by $\Mt^{(\alpha)}$ its $\alpha$th Cantor--Bendixon derivative. The topological space $\Mt$ is said to be \emph{scattered} if there is an ordinal $\alpha$ such that $\Mt^{(\alpha)}=\emptyset$. It is known that if $\Mt$ is scattered, compact, and nonempty, then there is an ordinal $\beta<\omega_1$ such that $\Mt^{(\beta)}$ is finite and nonempty. We refer to \cite[Section VI.8]{DGZ93} for more details.

A metric space $\Mt$ is called \emph{proper} if every closed and bounded subspace is compact. The symbol $\XX \unlhd\YY$ means that the quasi-Banach $\XX$ is isomorphic to a complemented subspace of the quasi-Banach space $\YY$.

\begin{prop}[{cf.\@ \cite[Proposition 4.1]{AACD21}}]\label{prop:rank_decomposition_step}
Suppose that $\Mt$ is a complete metric space such that $\Mt^{(\alpha)}$ is nonempty and finite for some ordinal $\alpha$. Then there is a countable family $(\Mt_i)_{i=1}^\infty$ of closed bounded subsets of $\Mt$ such that $\Mt_i^{(\alpha)}$ is empty for each $i\in\Nat$ and
\[
\F_p (\Mt) \unlhd \enpar{ \bigoplus_{i=1}^\infty \F_p (\Mt_i)}_{\ell_p}
\]
for every $0<p\leq 1$.
\end{prop}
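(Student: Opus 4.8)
The plan is to split the argument into a soft topological reduction, an explicit metric decomposition, and a complementation step, the last of which is the real work.

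First I would record the elementary fact that Cantor--Bendixson derivatives can only shrink when passing to subspaces: for every $A\subseteq\Mt$ and every ordinal $\beta$ one has $A^{(\beta)}\subseteq\Mt^{(\beta)}\cap A$, proved by transfinite induction (an accumulation point of $A$ is an accumulation point of $\Mt$, and limit stages are intersections). Consequently, any subset disjoint from $S\coloneqq\Mt^{(\alpha)}$ automatically has empty $\alpha$th derivative. Since $S$ is finite, this reduces the construction of the $\Mt_i$ to covering $\Mt\setminus S$ by closed bounded sets that stay away from $S$.

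Next I would produce that cover by dyadic shells. Writing $\phi(x)\coloneqq d(x,S)$ and setting, for $n\in\ZZ$, $\Mt_n\coloneqq\enbrace{x\in\Mt\colon 2^{-n-1}\le\phi(x)\le 2^{-n+1}}$, each $\Mt_n$ is closed, avoids $S$ (so $\Mt_n^{(\alpha)}=\emptyset$), and is bounded because every point lies within $2^{-n+1}$ of the finite set $S$; moreover $\bigcup_{n}\Mt_n=\Mt\setminus S$. Relabelling gives the desired countable family. One assembly map is then immediate: fixing a base point $b\in S$ and a base point $b_i\in\Mt_i$ in each piece, the $1$-Lipschitz maps $x\mapsto\delta_\Mt(x)-\delta_\Mt(b_i)$ induce norm-one operators $\hat\iota_i\colon\F_p(\Mt_i)\to\F_p(\Mt)$, and because $\F_p(\Mt)$ is a $p$-Banach space the assembly $R((\mu_i)_i)\coloneqq\sum_i\hat\iota_i(\mu_i)$ satisfies $\norm{R((\mu_i)_i)}^p\le\sum_i\norm{\mu_i}^p$, so $R$ is a bounded map from the $\ell_p$-sum onto a dense part of $\F_p(\Mt)$.

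The heart of the matter, and the step I expect to be the main obstacle, is to build a bounded linear right inverse $J$ with $R\circ J=\Id$, which exhibits $\F_p(\Mt)$ as a complemented subspace. The intended mechanism is a telescoping of retraction-induced projections: with $B_n\coloneqq\enbrace{x\colon\phi(x)\le 2^{-n}}\supseteq S$ and suitable Lipschitz retractions $r_n\colon\Mt\to B_n$ fixing $b$, the operators $P_n\coloneqq\widehat{r_n}$ are bounded projections of $\F_p(\Mt)$ onto $\F_p(B_n)$, and one writes $\Id=P_\infty+\sum_{n}(P_n-P_{n+1})$, where $P_\infty$ projects onto the finite-dimensional space $\F_p(S)$; the summands $P_n-P_{n+1}$ are to be arranged so as to land in the shells $\Mt_n$, and $J$ collects them into the $\ell_p$-sum. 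A point worth emphasising is that boundedness of these operators survives the loss of local convexity: a self-map of $\Mt$ is Lipschitz for the quasi-metric $d$ if and only if it is Lipschitz for the genuine metric $d^p$ (the Lipschitz constants being related by the exponent $1/p$), so the functoriality of $\F_p$ still applies even though scalar Lipschitz partitions of unity are essentially unavailable when $p<1$. The genuine difficulties to overcome -- and the places where the finiteness of $S$ and the completeness of $\Mt$ enter, following the scheme of \cite[Proposition 4.1]{AACD21} -- are: (i) producing retractions $r_n$ with uniformly bounded Lipschitz constants that confine each difference $P_n-P_{n+1}$ to a single bounded shell; (ii) establishing the $\ell_p$-summability $\sum_n\norm{(P_n-P_{n+1})\mu}^p\lesssim\norm{\mu}^p$, which relies on the geometric decay of the dyadic scales together with the $p$-subadditivity of the quasi-norm; and (iii) absorbing the finite-dimensional range of $P_\infty$ into one of the pieces $\F_p(\Mt_i)$ by a small-perturbation argument, since $S$ itself cannot serve as a piece.
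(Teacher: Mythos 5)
The paper does not actually prove this proposition: it cites \cite[Proposition 4.1]{AACD21} for the case $\alpha=1$ and asserts that the general case is analogous. Your outline correctly identifies the right decomposition for the general case (dyadic shells $\Mt_n=\enbrace{x\colon 2^{-n-1}\le d(x,S)\le 2^{-n+1}}$ around the finite set $S=\Mt^{(\alpha)}$, the monotonicity of Cantor--Bendixson derivatives under passing to closed subsets, the norm-one assembly operator $R$, and the need for a bounded right inverse $J$), and you are right that $J$ is where all the work lies. But the mechanism you propose for $J$ would fail. You want bounded projections $P_n$ of $\F_p(\Mt)$ onto $\F_p(B_n;\Mt)$ induced by Lipschitz retractions $r_n\colon\Mt\to B_n$. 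A closed subset of a general metric space is not a Lipschitz retract, and nothing about the neighbourhoods $B_n=\enbrace{x\colon d(x,S)\le 2^{-n}}$ makes them retracts; for instance already a three-point space shows a two-point subset need not admit a retraction with controlled constant, and here you would need retractions onto infinitely many nested sets with uniform constants. The argument of \cite{AACD21} (going back to Kalton's decomposition lemma) uses a different device: the \emph{weighting} operators $S_{h}(\delta_\Mt(x))=h(x)\delta_\Mt(x)$ induced by scalar Lipschitz bump functions $h_n$ subordinate to the shells --- exactly the operators constructed in Lemma~\ref{lem:multOp} of this paper. Such scalar functions always exist by McShane extension; your remark that ``scalar Lipschitz partitions of unity are essentially unavailable when $p<1$'' is not the obstruction (the obstruction for $p<1$ is the loss of duality in proving boundedness of the assembled map, not the existence of the $h_n$).

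Second, the estimate you label (ii), namely
\[
\sum_{n}\norm{T_n\mu}^p\lesssim\norm{\mu}^p\quad(T_n=P_n-P_{n+1}\ \text{or}\ T_n=S_{h_n}),
\]
is the actual content of \cite[Proposition 4.1]{AACD21} and cannot be waved through: $p$-subadditivity of the quasi-norm gives the \emph{reverse} inequality $\norm{\sum_n T_n\mu}^p\le\sum_n\norm{T_n\mu}^p$, which is useless here. For $p=1$ Kalton proves the forward inequality by dualizing against Lipschitz functions; for $p<1$ one must argue directly on finitely supported elements using the bounded-overlap structure of the shells and the norm bounds of Lemma~\ref{lem:multOp} (where the boundedness of each shell enters through the factor $R\norm{h}_{\Lip}$). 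Since you explicitly list both (i) and (ii) as ``difficulties to overcome'' rather than resolving them, the proposal is an outline of the statement rather than a proof; and in the case of (i) the outlined route is the wrong one and should be replaced by the multiplication-operator argument.
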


\begin{proof}
For $\alpha=1$ this was proved in \cite{AACD21}, the proof for general $\alpha$ is similar and therefore we omit it.
\end{proof}

The following general principle is inspired by a similar result from \cite{Dalet2015} concerning the approximation property.

\begin{theorem}\label{thm:preservedEllP}
Let $(P)$ be a property of $p$-Banach spaces, $0<p\le 1$, which is satisfied by any finite-dimensional space and is preserved by countable $\ell_p$-sums, isomorphisms, and complemented subspaces. If $\Mt$ is a scattered and proper metric space, then $\F_p(\Mt)$ has property $(P)$.
\end{theorem}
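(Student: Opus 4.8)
The plan is to isolate two ingredients: a transfinite induction on the Cantor--Bendixson rank that handles \emph{compact} scattered spaces, and a reduction of the general proper case to the compact one. The only structural input for the induction is \Cref{prop:rank_decomposition_step}, which lowers the rank but demands that the top nonempty derivative be \emph{finite}. For a compact scattered space this finiteness is automatic, whereas for a merely proper space it can fail (e.g.\ $\Mt=\Nat$ has $\Mt^{(0)}$ infinite and $\Mt^{(1)}=\emptyset$, so no derivative is nonempty and finite), which is exactly why the passage to compactness is needed and cannot be bypassed by applying \Cref{prop:rank_decomposition_step} to $\Mt$ itself.

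For the compact case I would argue by transfinite induction on the least ordinal $\gamma$ with $\Mt^{(\gamma)}=\emptyset$, proving that every compact scattered metric space of rank $\gamma$ has property $(P)$. Finite spaces yield finite-dimensional free spaces, which have $(P)$ by hypothesis; since a compact discrete space is finite, this settles $\gamma\le 1$. For the inductive step one uses the fact recalled in the excerpt that a nonempty compact scattered space has $\Mt^{(\beta)}$ finite and nonempty for some $\beta<\omega_1$, whence $\gamma=\beta+1$ is a successor (so no genuine limit step occurs; alternatively, compactness and the finite intersection property applied to the nested nonempty closed sets $(\Mt^{(\beta)})_{\beta<\gamma}$ rule out a limit $\gamma$). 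Then \Cref{prop:rank_decomposition_step} applies with $\alpha=\beta$ and furnishes closed bounded --- hence compact --- subsets $(\Mt_i)_{i=1}^\infty$ with $\Mt_i^{(\beta)}=\emptyset$ and $\F_p(\Mt)\unlhd\enpar{\bigoplus_{i=1}^\infty \F_p(\Mt_i)}_{\ell_p}$. Each $\Mt_i$ is compact scattered of rank at most $\beta<\gamma$, so $\F_p(\Mt_i)$ has $(P)$ by the induction hypothesis; as $(P)$ is preserved by countable $\ell_p$-sums, then by complemented subspaces, and then by isomorphisms, $\F_p(\Mt)$ inherits $(P)$.

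It remains to reduce an arbitrary proper scattered $\Mt$ to the compact case, and this is where the real work --- the \emph{compact reduction} --- lies. Fixing the base point $0$, I would cover $\Mt$ by the bounded radial shells $\Mt_n\coloneqq\{x\in\Mt\colon n-1\le d(x,0)\le n+1\}$ and, using Lipschitz cut-off functions of the radial coordinate $x\mapsto d(x,0)$ (the overlap of consecutive shells being what allows the cut-offs to be patched together), construct a bounded linear map $\F_p(\Mt)\to\enpar{\bigoplus_n \F_p(\Mt_n)}_{\ell_p}$ admitting a bounded left inverse, so that $\F_p(\Mt)\unlhd\enpar{\bigoplus_n \F_p(\Mt_n)}_{\ell_p}$. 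Granting this, properness forces each $\overline{\Mt_n}$ to be compact, and since $\F_p(\Mt_n)\cong\F_p(\overline{\Mt_n})$ with $\overline{\Mt_n}$ compact scattered, the previous step gives each summand property $(P)$; closure under $\ell_p$-sums and complemented subspaces then transfers $(P)$ to $\F_p(\Mt)$. The delicate point, and the expected main obstacle, is precisely this complemented decomposition: in the non-locally-convex regime $p<1$ one must control the $p$-norms of the cut-off operators and their interaction across the overlapping shells, which is exactly the adapted compact-reduction machinery the paper develops.
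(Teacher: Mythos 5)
Your treatment of the compact case is essentially the paper's proof: a transfinite induction driven by \Cref{prop:rank_decomposition_step}, with the base case coming from finiteness of $\Mt$ and the inductive step from the fact that a nonempty compact scattered space has some finite nonempty derivative of strictly smaller index. Your observation that the rank of a compact scattered space is a successor is a correct (and slightly more explicit) way of organizing the same induction. No issues there.

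The gap is in the reduction from proper to compact. The paper does not construct the decomposition $\F_p(\Mt)\unlhd\enpar{\bigoplus_{n}\F_p(\Mt_n)}_{\ell_p}$ by hand; it invokes \cite[Theorem 3.5]{AACD21}, which already provides a complemented decomposition of $\F_p(\Mt)$ into an $\ell_p$-sum of free $p$-spaces over closed \emph{bounded} subsets (properness then upgrades these to compact, and scatteredness passes to subspaces). Your improvised construction, as stated, does not work: with the arithmetic shells $\Mt_n=\{x\colon n-1\le d(x,0)\le n+1\}$, a cut-off $h_n$ subordinate to $\Mt_n$ has $\norm{h_n}_{\Lip}\asymp 1$ while the shell sits at distance $\asymp n$ from the base point, so the associated multiplication operator has norm of order $\enpar{\norm{h_n}_\infty^p+(R\norm{h_n}_{\Lip})^p}^{1/p}\asymp n$ (compare \Cref{lem:multOp}); the resulting map into the $\ell_p$-sum is therefore unbounded. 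The standard fix is to use dyadic annuli $\{x\colon 2^{k-1}\le d(x,0)\le 2^{k+1}\}$ with cut-offs of Lipschitz constant $\asymp 2^{-k}$, as in Kalton's decomposition lemma \cite{Kalton2004} and its $p<1$ analogue. You correctly flag the norm control as the main obstacle, but flagging it is not the same as resolving it; either carry out the dyadic construction with the $p$-norm estimates, or simply cite \cite[Theorem 3.5]{AACD21} (see also \cite{AACD2022}) as the paper does.
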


\begin{proof}
We initially prove the result when $\Mt$ is additionally compact. To that end, we proceed by transfinite induction on $\alpha<\omega_1$ such that $\Mt^{(\alpha)}$ is nonempty and finite.

For $\alpha=0$ the result is true, because then $\Mt$ is finite and therefore $\F_p(\Mt)$ is finite-dimensional.

Let $\alpha>0$ be such that the claim holds for any $\beta < \alpha$. If $\Nt$ is a closed subspace of $\Mt$ with $\Nt^{(\alpha)}=\emptyset$, then, since $\Nt$ inherits compactness from $\Mt$, there exists $\beta<\alpha$ such that $\Nt^{(\beta)}$ is finite and nonempty. Consequently, $\F_p(\Nt)$ has property $(P)$. By \Cref{prop:rank_decomposition_step}, $\F_p(\Mt)$ has property (P).

To prove the result in general, we note that by \cite[Theorem 3.5]{AACD21} we can still write
\[
\F_p(\Mt) \unlhd \enpar{ \bigoplus_{i=1}^\infty \F_p (\Mt_i)}_{\ell_p},
\]
where $(\Mt_i)_{i=1}^\infty$ are closed bounded subsets of $\Mt$. Since each metric space $\Mt_i$ is scattered and compact, $\F_p(\Mt_i)$ has property $(P)$. Hence, so does $\F_p(\Mt)$.
\end{proof}

The following result follows directly from \Cref{thm:preservedEllP}. Indeed, it is easy to see that the approximation property is preserved under $\ell_p$-sums and that it passes to complemented subspaces (see \cite[Propositions 2.9 and 2.10]{AACD21}). The same is true for the Schur $p$-property by \Cref{thm:schurPreservedByEllpsums}.

\begin{theorem}\label{cor:scatteredProper}
Suppose $\Mt$ is a scattered proper metric space. Then $\F_p(\Mt)$ has the approximation property and the Schur $p$-property for all $p\in (0,1]$.
\end{theorem}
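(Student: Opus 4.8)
The plan is to invoke the general transfer principle \Cref{thm:preservedEllP} twice: once with $(P)$ taken to be the approximation property, and once with $(P)$ taken to be the Schur $p$-property. Since \Cref{thm:preservedEllP} already packages the entire transfinite-induction argument together with the decomposition \Cref{prop:rank_decomposition_step}, the only work that remains is to verify, for each of the two properties, the three structural hypotheses of that theorem: that $(P)$ holds for every finite-dimensional $p$-Banach space, that $(P)$ is stable under countable $\ell_p$-sums, and that $(P)$ passes to complemented subspaces (and is isomorphism-invariant).

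For the approximation property I would argue as follows. Finite-dimensional $p$-Banach spaces trivially have the approximation property, and isomorphism-invariance is immediate from the definition. Stability under countable $\ell_p$-sums and inheritance by complemented subspaces are recorded in \cite[Propositions 2.9 and 2.10]{AACD21}. Hence \Cref{thm:preservedEllP}, applied with $(P)$ equal to the approximation property, shows that $\F_p(\Mt)$ has the approximation property whenever $\Mt$ is scattered and proper. For the Schur $p$-property the verification draws on facts established earlier in the paper: finite-dimensional spaces enjoy the Schur $p$-property by the discussion following \Cref{prop:QBUS}; isomorphism-invariance and inheritance by closed subspaces (and therefore by complemented subspaces) were observed immediately after \Cref{df:SSchur}; and stability under countable $\ell_p$-sums is precisely \Cref{thm:schurPreservedByEllpsums}. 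A second application of \Cref{thm:preservedEllP}, now with $(P)$ the Schur $p$-property, then yields the claim for all $p\in(0,1]$.

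I do not expect a genuine obstacle at this stage: the substantive content has been localized in \Cref{thm:preservedEllP} and \Cref{thm:schurPreservedByEllpsums}, so the final statement is a bookkeeping corollary assembling those ingredients. If any point deserves a moment's care, it is the stability of the approximation property under $\ell_p$-sums in the non-locally convex range $0<p<1$, where the usual duality-based arguments are unavailable; but this is exactly what the cited propositions from \cite{AACD21} supply, so no additional argument is needed here.
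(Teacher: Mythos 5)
Your proposal is correct and follows exactly the paper's own route: the paper likewise deduces the result directly from \Cref{thm:preservedEllP}, citing \cite[Propositions 2.9 and 2.10]{AACD21} for the stability of the approximation property under $\ell_p$-sums and complemented subspaces, and \Cref{thm:schurPreservedByEllpsums} for the Schur $p$-property. Your additional verifications (finite-dimensional triviality, isomorphism-invariance, inheritance by subspaces) are all consistent with the observations recorded earlier in the paper, so nothing is missing.
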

\section{Compact reduction}\label{cpctReduction}\noindent
This section is geared towards proving the following theorem, which in particular answers \cite[Question 6.3]{AACD21} in the positive.

\begin{theorem}\label{thm:discreteCase}
Suppose $\Mt$ is a complete metric space with at most finitely many accumulation points. Then $\F_p(\Mt)$ has the approximation property for all $p\in(0,1]$.
\end{theorem}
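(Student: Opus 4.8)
The plan is to isolate the genuinely discrete core of the problem and then attack it with a hands-on compact reduction. First I would dispose of the accumulation points. Writing $\Mt^{(1)}$ for the (by hypothesis finite) set of accumulation points of $\Mt$, there are two cases. If $\Mt^{(1)}\neq\emptyset$, then $\Mt^{(1)}$ is nonempty and finite, so \Cref{prop:rank_decomposition_step} applies with $\alpha=1$ and produces closed bounded subsets $(\Mt_i)_{i=1}^\infty$ with $\Mt_i^{(1)}=\emptyset$ and
\[
\F_p(\Mt)\unlhd\enpar{\bigoplus_{i=1}^\infty \F_p(\Mt_i)}_{\ell_p}.
\]
Each $\Mt_i$ is closed in the complete space $\Mt$, hence complete, and having empty first derivative it is discrete. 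If instead $\Mt^{(1)}=\emptyset$, then $\Mt$ itself is complete and discrete. Since the approximation property passes to complemented subspaces and to countable $\ell_p$-sums (the same facts invoked before \Cref{cor:scatteredProper}), in both cases it suffices to prove the theorem when $\Mt$ is a \emph{complete discrete} metric space. This is the crux, and it is precisely the assertion announced in the introduction that answers \cite[Question 6.3]{AACD21}.

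For the discrete case I would run a compact reduction that bypasses duality, since for $p<1$ the dual of $\F_p(\Mt)$ is too poor to mimic the weak$^\ast$ argument of \cite{ANPP}. The target is the bounded approximation property, which in particular yields the approximation property: it suffices to produce, for each compact $\mathcal K\subset\F_p(\Mt)$ and each $\varepsilon>0$, a finite-rank operator $T$ whose norm is bounded independently of $\mathcal K$ and with $\norm{Tu-u}<\varepsilon$ for $u\in\mathcal K$. Because finitely supported vectors are dense and $\mathcal K$ is totally bounded, I can replace $\mathcal K$ by a finite $\varepsilon$-net consisting of finitely supported vectors; a uniform bound on $\norm{T}$ together with the $p$-triangle inequality then transfers closeness from the net to all of $\mathcal K$. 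The supports of the net lie in a single finite set $N\subset\Mt$, so the remaining task is to build a uniformly bounded finite-rank operator acting as the identity on $\F_p(N)$. The natural candidate is $T=\widehat{\iota}\,\widehat{r}$, where $r\colon\Mt\to N'$ is a Lipschitz retraction onto a finite $N'\supset N$ and $\widehat{r}$, $\widehat{\iota}$ are the linearizations of $r$ and of the inclusion $\iota\colon N'\hookrightarrow\Mt$; this $T$ fixes $\F_p(N)$, has rank $\le\dim\F_p(N')<\infty$, and satisfies $\norm{T}=\norm{r}_{\Lip}$.

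The heart of the matter—and the step I expect to be the main obstacle—is constructing these retractions with Lipschitz constants bounded independently of $N'$. Here completeness and discreteness enter decisively: by \Cref{lem:dichothomy} a bounded sequence of distinct points of $\Mt$ has a uniformly separated subsequence (a convergent one would force an accumulation point), so $\Mt$ carries no nontrivial Cauchy sequence and its bounded subsets are uniformly separated at a definite scale. I would use this to organize $\Mt$ into a nested family of finite approximating sets and define $r$ by a nearest-earlier-point rule adapted to that scale structure, arguing that the absence of clustering keeps the distortion, and hence $\norm{r}_{\Lip}$, under control. The subtle point is that separations between nearby points may tend to zero even though no genuine accumulation occurs, so the decomposition must be carried out simultaneously at all dyadic scales; securing a \emph{single} Lipschitz bound valid across scales, rather than one degenerating as the scale shrinks, is exactly where the argument must be most careful.

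Finally, with the uniformly bounded projections $T=\widehat{\iota}\,\widehat{r}$ in hand and shown to converge to the identity on every finitely supported vector, I would assemble them through the weak compact-reduction principle to conclude that $\F_p(\Mt)$ has the approximation property, and then read the theorem off from the reduction of the first paragraph. Should the global retraction prove too rigid, I would keep in reserve the alternative of decomposing a complete discrete $\Mt$ into mutually far-apart bounded blocks, deducing an $\ell_p$-decomposition of $\F_p(\Mt)$ into free spaces over those blocks, and recursing down to finite, hence trivially approximable, pieces.
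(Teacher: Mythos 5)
Your reduction to the complete discrete case via \Cref{prop:rank_decomposition_step} and the stability of the approximation property under complemented subspaces and countable $\ell_p$-sums is exactly the paper's first step and is correct. The gap lies where you yourself locate it: the construction of Lipschitz retractions $r\colon\Mt\to N'$ onto finite sets $N'\supset N$ with $\norm{r}_{\Lip}$ bounded independently of $N$. You never carry this out, and the sketch rests on a false premise: bounded subsets of a complete discrete metric space are \emph{not} uniformly separated at a definite scale (take $\Mt=\bigcup_{n}\{a_n,b_n\}$ with $d(a_n,b_n)=1/n$ and all other distances equal to $1$; this space is bounded, complete and discrete, yet $\inf_{x\neq y}d(x,y)=0$). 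Your later sentence about separations tending to zero concedes the point, but the ``nearest-earlier-point rule'' is never shown to survive it. More seriously, a uniformly bounded family of finite-rank operators of the form $\widehat{\iota}\,\widehat{r}$ would witness the \emph{bounded} approximation property via composition operators; even for $p=1$ and $\Mt$ uniformly discrete this is a substantially harder question than the approximation property itself, and nothing in the hypotheses supplies such retractions. Your fallback of splitting $\Mt$ into mutually far-apart bounded blocks also fails for, say, $\Mt=\Nat\subset\Rea$, which admits no such decomposition.

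The paper circumvents all of this by abandoning composition operators. The finite-rank operator is assembled from \emph{weighting} operators $S_h(\delta_\Mt(x))=h(x)\,\delta_\Mt(x)$ (\Cref{lem:multOp}), which are not induced by any self-map of $\Mt$ and whose norms are allowed to blow up as the Lipschitz bumps $h_n$ sharpen. One composes countably many of them; convergence of the composition on the given compact set $W$ is secured not by a uniform norm bound but by Kalton's property of $W$ (\Cref{lem:cpct}), and the fact that in the discrete case the localizing sets $K_n$ stabilize at a \emph{finite} set is obtained from Kuratowski's intersection theorem inside \Cref{thm:KaltonsProperty}, not from any retraction. Note that the limit operator $T$ there is defined only on $\Span(W)$ and is possibly unbounded, so the method yields the approximation property and nothing stronger; aiming at uniformly bounded approximants overshoots both what the statement requires and what the technique can deliver. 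To repair your outline, you would have to either prove the uniform retraction claim for complete discrete spaces (which I do not believe is available) or replace the retractions by the weighting operators of \Cref{lem:multOp} and rerun the compact reduction of \Cref{thm:KaltonsProperty}.
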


Given $p\in(0,1]$, a \emph{metric} space $\Mt$, and $\Nt\subset \Mt$, we put
\[
\F_p(\Nt;\Mt)\coloneqq\enbrak{\delta_{\Mt}(x)\colon x\in \Nt} \subset \F_p(\Mt).
\]

The authors of \cite{CuthRaunig24} showed the existence of a constant $K(p)$ (in fact, $K(p)$ can be taken to be $7\cdot 12^{1/p-1}$), such that $\F_p(\Nt)$ is canonically $K(p)$-isomorphic to $\F_p(\Nt;\Mt)$, see \cite[Theorem 3.21]{CuthRaunig24}.


Since the family $(\delta_\Mt(x))_{x\in\Mt\setminus\{0\}}$ is linearly independent (see \cite{AACD2018}), for each $\gamma\in \Span (\delta_\Mt(\Mt))$ there is a minimal finite set $E\subset\Mt$ such that $0\in E$ and $\gamma\in\F_p(E;\Mt)$. We call $E$ the \emph{support of $\gamma$}, denoted by $\suppt(\gamma)$.

Given $E\subset\Mt$, we put
\[
[E]_\delta\coloneqq\enbrace{x\in \Mt\colon d(x,E)\leq \delta}.
\]

\begin{defn}
Let $p\in (0,1]$, $\Mt$ be a metric space and $\Nt\subset \Mt$. We say that $W\subset \F_p(\Nt;\Mt)$ has \emph{Kalton's property relative to $\Nt$} if for every $\varepsilon$, $\delta>0$, there exists a finite set $E\subset \Nt$ such that
\[
W\subset \F_p\enpar{[E]_\delta\cap \Nt;\Mt} + \varepsilon B_{\F_p(\Nt;\Mt)}.
\]

In the case $\Nt = \Mt$, we say that $W$ has \emph{Kalton's property}.
\end{defn}

Kalton’s property for subsets of Lipschitz free spaces $\F(\Mt)$ was established in \cite{ANPP}. The authors showed that every weakly precompact set in a Lipschitz free space possesses Kalton’s property (see \cite[Proposition 3.3]{ANPP}). Moreover, any set with Kalton’s property can be suitably approximated within a free space over a compact space (see \cite[Theorem 3.2]{ANPP}).

The combination of both results yields a ``compact reduction principle'' that enables us to deduce some properties of Lipschitz free spaces from their subspaces, regarded as Lipschitz free spaces over compact subsets.

Our proof of Theorem~\ref{thm:discreteCase} relies on a generalization of this compact reduction principle to Lipschitz free $p$-spaces $\F_p(\Mt)$ in the case when $\Mt$ is a discrete metric space. In this setting, knowing that compact sets have Kalton's property will suffice for our purposes.

\begin{lemma}\label{lem:cpct}
Let $p\in (0,1]$. Suppose that $\Mt$ is a metric space and that $K\subset \F_p(\Mt)$ is a relatively compact set. Then for every $\varepsilon>0$ there exists a finite set $E$ such that $K\subset \F_p(E;\Mt) + \varepsilon B_{\F_p(\Mt)}$. In particular, $K$ has Kalton's property.
\end{lemma}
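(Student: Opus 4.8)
The plan is to combine the total boundedness of $K$ with the density of the finitely supported vectors in $\F_p(\Mt)$, using the $p$-subadditivity of the $p$-norm to glue together two successive approximations. There is no serious obstacle here; the only points demanding attention are the bookkeeping with the quasi-triangle inequality and the appeal to density of finitely supported elements.

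First I would fix $\varepsilon>0$ and set $\delta\coloneqq 2^{-1/p}\varepsilon$, chosen so that $\norm{u}\le\delta$ and $\norm{v}\le\delta$ force $\norm{u+v}\le(\delta^p+\delta^p)^{1/p}=\varepsilon$. Since $\F_p(\Mt)$ is a complete $p$-normed space and $K$ is relatively compact, its closure is compact, hence $K$ is totally bounded; thus there is a finite set $g_1,\dots,g_n$ with $K\subset\bigcup_{i=1}^n B(g_i,\delta)$.

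Next I would make the centres finitely supported. The linear span $\Span(\delta_\Mt(\Mt))$ is dense in $\F_p(\Mt)$ (this follows from the universal property, since the closed span of $\delta_\Mt(\Mt)$ already realizes the defining factorization; cf.\ \cite{AACD2018}), so for each $i$ I may pick $\gamma_i\in\Span(\delta_\Mt(\Mt))$ with $\norm{g_i-\gamma_i}\le\delta$. Given $\gamma\in K$, choosing $i$ with $\norm{\gamma-g_i}\le\delta$ and invoking $p$-subadditivity yields $\norm{\gamma-\gamma_i}\le\varepsilon$.

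Finally I would put $E\coloneqq\{0\}\cup\bigcup_{i=1}^n\suppt(\gamma_i)$, a finite subset of $\Mt$. Each $\gamma_i$ lies in $\F_p(E;\Mt)$, so the bound $\norm{\gamma-\gamma_i}\le\varepsilon$ gives $\gamma\in\F_p(E;\Mt)+\varepsilon B_{\F_p(\Mt)}$; as $\gamma\in K$ was arbitrary, $K\subset\F_p(E;\Mt)+\varepsilon B_{\F_p(\Mt)}$, which is the asserted inclusion. The ``in particular'' clause is then immediate: since $E\subset[E]_\delta\cap\Mt$ for every $\delta>0$, one has $\F_p(E;\Mt)\subseteq\F_p([E]_\delta\cap\Mt;\Mt)$, so the single finite set $E$ simultaneously witnesses Kalton's property for all $\varepsilon,\delta>0$.
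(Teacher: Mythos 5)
Your proof is correct, and it takes the direct route where the paper argues by contradiction. You extract a finite $\delta$-net for the totally bounded set $K$ with $\delta=2^{-1/p}\varepsilon$, replace the centres by finitely supported elements using the density of $\Span(\delta_\Mt(\Mt))$ in $\F_p(\Mt)$, and take $E$ to be the union of their supports; $p$-subadditivity then gives the inclusion. The paper instead supposes the conclusion fails for some $\varepsilon>0$ and recursively produces $\mu_n\in K\setminus\enpar{\F_p(E_{n-1};\Mt)+\varepsilon B_{\F_p(\Mt)}}$ together with finitely supported $\gamma_n$ satisfying $\norm{\gamma_n-\mu_n}<2^{-1/p}\varepsilon$ and growing sets $E_n=E_{n-1}\cup\suppt(\gamma_n)$; the estimate $\norm{\mu_n-\mu_m}^p\ge\norm{\gamma_n-\mu_m}^p-\varepsilon^p/2\ge\varepsilon^p/2$ then yields an infinite $(2^{-1/p}\varepsilon)$-separated sequence in $K$, contradicting relative compactness. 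The two arguments are essentially contrapositives of one another and use the same ingredients (total boundedness, density of the finitely supported vectors, the $p$-triangle inequality, and even the same constant $2^{-1/p}\varepsilon$); yours is arguably the more transparent packaging, while the paper's version isolates the reusable principle that failure of finite approximability produces a uniformly separated sequence, a shape of argument that recurs elsewhere in the compact reduction section. One cosmetic point: your closing sentence claims that a ``single'' finite set $E$ witnesses Kalton's property for all $\varepsilon,\delta>0$, but your $E$ does depend on $\varepsilon$ through the choice of the net; what your construction actually shows, and all that is needed, is that for each $\varepsilon$ one finite set works uniformly in $\delta$, because $E\subset[E]_\delta$ for every $\delta>0$.
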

\begin{proof}
Assume by contradiction that the conclusion does not hold, and let $\varepsilon>0$ witness the opposite. Put $E_0\coloneqq\{0\}$. We recursively find sequences $(\mu_n)_{n=1}^\infty$ from $K$, $(\gamma_n)_{n=1}^\infty$ from $ \Span (\delta_\Mt(\Mt))$, and finite sets $(E_n)_{n=0}^\infty$ such that for every $n\in\Nat$, we have
\begin{itemize}
\item $\mu_n\in K\setminus \enpar{\F_p(E_{n-1};\Mt} + \varepsilon B_{\F_p(\Mt)})$;
\item $\norm{\gamma_n - \mu_n} < 2^{-1/p} \varepsilon$; and
\item $E_n= E_{n-1}\cup \suppt(\gamma_n)$.
\end{itemize}
Then for $n<m$, we have
\[
\norm{\mu_n-\mu_m}^p\geq \norm{\gamma_n-\mu_m}^p - \frac{\varepsilon^p}{2}\geq \frac{\varepsilon^p}{2}.
\]

So, $(\mu_n)_{n=1}^\infty$ is a $(2^{-1/p} \varepsilon)$-separated separated sequence in $K$, which contradicts the relative compactness of $K$.
\end{proof}

We shall use a pair of of preliminary lemmas to establish our compact reduction principle for Lipschitz free $p$-spaces.

\begin{lemma}\label{lem:multOp}
Let $0<p\le 1$, $\Mt$ be a metric space, let $\Nt\subset \Mt$ be a nonempty and bounded subset, and $h\colon \Nt\to \Rea$  be a Lipschitz map. Pick an arbitrary point $0\in\Nt$ and set $R=\max_{x\in\Nt} d(x,0)$. Then there exists a linear operator
\[
S_h\colon \F_p(\Nt;\Mt)\to \F_p(\Mt)
\]
such that $S_h(\delta_\Mt(x)) = h(x)\delta_\Mt(x)$ for all $x\in \Nt$. Moreover, 
\[
\norm{S_h}\leq K(p) \enpar{ \norm{h}_\infty^p + \enpar{ R \norm{h}_{\Lip}}^p}^{1/p}.
\]
\end{lemma}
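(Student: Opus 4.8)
The plan is to realize $S_h$ as the composition of the linearization of a suitable vector-valued Lipschitz map with the canonical identification of $\F_p(\Nt;\Mt)$ with the abstract free space $\F_p(\Nt)$. Without loss of generality I take the chosen point $0\in\Nt$ to be the distinguished point of both $\Mt$ and $\Nt$, which is legitimate since $\F_p$ does not depend on this choice; in particular $\delta_\Mt(0)=0$ and $\norm{\delta_\Mt(y)}=d(y,0)\le R$ for every $y\in\Nt$.

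First I would define $f\colon\Nt\to\F_p(\Mt)$ by $f(x)=h(x)\,\delta_\Mt(x)$, which satisfies $f(0)=h(0)\,\delta_\Mt(0)=0$. The heart of the argument is the estimate of $\norm{f}_{\Lip}$. For $x,y\in\Nt$ I would split
\[
f(x)-f(y)=h(x)\enpar{\delta_\Mt(x)-\delta_\Mt(y)}+\enpar{h(x)-h(y)}\delta_\Mt(y)
\]
and apply the $p$-triangle inequality valid in the $p$-Banach space $\F_p(\Mt)$, together with $\norm{\delta_\Mt(x)-\delta_\Mt(y)}=d(x,y)$, $\norm{\delta_\Mt(y)}\le R$, $\abs{h(x)}\le\norm{h}_\infty$, and $\abs{h(x)-h(y)}\le\norm{h}_{\Lip}\,d(x,y)$. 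This yields
\[
\norm{f(x)-f(y)}^p\le\enpar{\norm{h}_\infty^p+R^p\norm{h}_{\Lip}^p}d(x,y)^p,
\]
hence $\norm{f}_{\Lip}\le\enpar{\norm{h}_\infty^p+\enpar{R\norm{h}_{\Lip}}^p}^{1/p}$.

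By the universal property of $\F_p(\Nt)$ there is then a unique linear operator $T_f\colon\F_p(\Nt)\to\F_p(\Mt)$ with $T_f\circ\delta_\Nt=f$ and $\norm{T_f}=\norm{f}_{\Lip}$. The canonical non-expansive surjection $\iota\colon\F_p(\Nt)\to\F_p(\Nt;\Mt)$ determined by $\iota(\delta_\Nt(x))=\delta_\Mt(x)$ is a $K(p)$-isomorphism by \cite[Theorem 3.21]{CuthRaunig24}, so $\norm{\iota^{-1}}\le K(p)$. I would then set $S_h\coloneqq T_f\circ\iota^{-1}$. For $x\in\Nt$ we get $S_h(\delta_\Mt(x))=T_f(\delta_\Nt(x))=h(x)\,\delta_\Mt(x)$, and
\[
\norm{S_h}\le\norm{T_f}\,\norm{\iota^{-1}}\le K(p)\enpar{\norm{h}_\infty^p+\enpar{R\norm{h}_{\Lip}}^p}^{1/p},
\]
as required.

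I do not expect a serious obstacle here: the construction is a routine application of the universal property once the Lipschitz estimate is in place, and the factor $K(p)$ is supplied directly by the identification of $\F_p(\Nt;\Mt)$ with $\F_p(\Nt)$. The only points requiring care are the bookkeeping of the distinguished point (so that $f(0)=0$ and $\norm{\delta_\Mt(y)}\le R$) and the choice of which factor carries $h(x)$ versus $h(y)$ in the decomposition of $f(x)-f(y)$, which is precisely what makes the two terms combine into the stated $\ell_p$-type constant.
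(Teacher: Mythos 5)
Your proposal is correct and follows essentially the same route as the paper's proof: the same decomposition $f(x)-f(y)=h(x)(\delta_\Mt(x)-\delta_\Mt(y))+(h(x)-h(y))\delta_\Mt(y)$, the same $p$-triangle estimate giving the Lipschitz constant $(\norm{h}_\infty^p+(R\norm{h}_{\Lip})^p)^{1/p}$, the same appeal to the universal property, and the same source for the factor $K(p)$ via the canonical identification of $\F_p(\Nt)$ with $\F_p(\Nt;\Mt)$. No issues.
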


\begin{proof}
Set $L\coloneqq \enpar{ \norm{h}_\infty^p + \enpar{R \norm{h}_{\Lip}}^p}^{1/p}$. We use $0$ as the distinguished point of $\Nt$ and $\Mt$. Consider the map
\[
s\colon\Nt\to \F_p(\Mt), \quad
s(x)=h(x)\delta_\Mt(x).
\]

For $x$, $y\in \Nt$ we have
\begin{align*}
\norm{s(x)-s(y)} ^p & \leq \abs{h(x)}^p \norm{\delta_\Mt(x)-\delta_\Mt(y)}^p + \abs{h(x)-h(y)}^p\norm{\delta_\Mt(y)}^p\\
& \leq \enpar{ \norm{h}_\infty^p + \norm{h}_{\Lip}^p R^p} d^p(x,y).
\end{align*}

That is, $s$ is $L$-Lipschitz. By the universal property of Lipschitz free $p$-spaces, $s$ extends to a linear map $S\colon \F_p(\Nt)\to \F_p(\Mt)$ with $\norm{S} \leq L$. To complete the proof, it remains to invoke the definition of $K(p)$.
\end{proof}

\begin{lemma}\label{claim:subsetKalton}
Let $L\subset K\subset \Mt$, where $\Mt$ is a metric space.  Let $0<p\le 1$, $W\subset \F_p(K;\Mt)$ and $S\colon \F_p(K;\Mt)\to \F_p(L;\Mt)$ be a nonnull bounded linear operator. Suppose that $W$ has Kalton’s property relative to $K$, and that $S(\F_p(F;\Mt))\subset \F_p(F\cap L;\Mt)$ for every $F\subset K$. Then $S(W)$ also has Kalton's property relative to $L$.
\end{lemma}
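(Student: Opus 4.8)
The plan is to verify Kalton's property of $S(W)$ relative to $L$ directly from the definition: given $\varepsilon,\delta>0$, I must produce a \emph{finite} set $E'\subset L$ with $S(W)\subset \F_p([E']_\delta\cap L;\Mt)+\varepsilon B_{\F_p(L;\Mt)}$. The strategy has three moves: first exploit Kalton's property of $W$ relative to $K$ with suitably shrunk parameters, then transport the resulting inclusion through $S$ using both hypotheses on $S$, and finally relocate the centres of the covering set from $K$ into $L$.

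For the first two moves, set $\varepsilon'\coloneqq\varepsilon/\norm{S}$ — here the hypothesis that $S$ is nonnull is exactly what guarantees $\norm{S}>0$ — and $\delta'\coloneqq\delta/2$. Kalton's property of $W$ relative to $K$ yields a finite $E\subset K$ with $W\subset \F_p([E]_{\delta'}\cap K;\Mt)+\varepsilon' B_{\F_p(K;\Mt)}$. Applying the bounded linear map $S$ and using $S(\varepsilon' B_{\F_p(K;\Mt)})\subset \varepsilon'\norm{S}B_{\F_p(L;\Mt)}=\varepsilon B_{\F_p(L;\Mt)}$, I obtain
\[
S(W)\subset S\bigl(\F_p([E]_{\delta'}\cap K;\Mt)\bigr)+\varepsilon B_{\F_p(L;\Mt)}.
\]
Now I invoke the hypothesis $S(\F_p(F;\Mt))\subset \F_p(F\cap L;\Mt)$ with $F=[E]_{\delta'}\cap K\subset K$; since $L\subset K$ we have $F\cap L=[E]_{\delta'}\cap L$, so the image term is contained in $\F_p([E]_{\delta'}\cap L;\Mt)$.

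The remaining — and genuinely the only delicate — step is that the finite set $E$ lies in $K$, not necessarily in $L$, so it cannot serve as the witness required by the definition of Kalton's property \emph{relative to $L$}. I would fix this by relocating centres: for each $e\in E$ for which $B(e,\delta')\cap L\neq\emptyset$, choose a point $\ell_e\in L$ with $d(\ell_e,e)\le\delta'$, and let $E'$ be the (finite) set of all such $\ell_e$, so that $E'\subset L$. If $x\in[E]_{\delta'}\cap L$, then $d(x,e)\le\delta'$ for some $e\in E$, and since $x\in L$ witnesses $B(e,\delta')\cap L\neq\emptyset$ the point $\ell_e$ exists; the triangle inequality then gives $d(x,\ell_e)\le d(x,e)+d(e,\ell_e)\le 2\delta'=\delta$. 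Hence $[E]_{\delta'}\cap L\subset [E']_\delta\cap L$, whence $\F_p([E]_{\delta'}\cap L;\Mt)\subset \F_p([E']_\delta\cap L;\Mt)$. Chaining the inclusions gives $S(W)\subset \F_p([E']_\delta\cap L;\Mt)+\varepsilon B_{\F_p(L;\Mt)}$ with $E'\subset L$ finite, which is exactly Kalton's property of $S(W)$ relative to $L$. The only arithmetic to watch is the doubling of the radius forced by the centre relocation, which is why I shrink $\delta$ to $\delta'=\delta/2$ at the outset.
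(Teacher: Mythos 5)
Your proof is correct and follows essentially the same route as the paper's: shrink the parameters to $\delta/2$ and $\varepsilon/\norm{S}$, push the resulting inclusion through $S$ using the hypothesis $S(\F_p(F;\Mt))\subset\F_p(F\cap L;\Mt)$, and then relocate the centres of $E$ to nearby points of $L$ via the triangle inequality, which accounts for the doubling of the radius. No gaps.
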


\begin{proof}
Pick $\varepsilon>0$ and $\delta>0$. By assumption, there exists a finite set $E\subset K$ with
\[
W\subset \F_p([E]_{\delta/2};\Mt) + \frac{\varepsilon}{\norm{S}} B_{\F_p(K;\Mt)}.
\]

Let $E_0$ be the set of all $x\in E$ for which there exists $y(x)\in L$ with $d(x,y(x))\le \delta/2$. Set $F\coloneqq\{y(x)\colon x\in E_0\}$. 

Since, by the triangle law, $[E]_{\delta/2}\cap L\subset [F]_{\delta}$, we have
\begin{align*}
S(W)
&\subset S(\F_p([E]_{\delta/2};\Mt)) + \varepsilon B_{\F_p(L;\Mt)}\\
&\subset \F_p([E]_{\delta/2}\cap L;\Mt) + \varepsilon B_{\F_p(L;\Mt)}\\
&\subset \F_p([F]_{\delta};\Mt) + \varepsilon B_{\F_p(L;\Mt)}. \qedhere
\end{align*}
\end{proof}

We now complete the compact reduction principle.
Even though we are not able to remove the assumption of discreteness, quite a large portion of the proof does not require this assumption. Our result reads as follows.

\begin{theorem}\label{thm:KaltonsProperty} Let $\Mt$ be a complete metric space. Suppose $W\subset \F_p(\Mt)$ has Kalton's property. Then for every $\varepsilon>0$, there are
\begin{enumerate}[label=(\roman*),leftmargin=*,widest=iii]
\item\label{it:Bima:a} a sequence $(K_n)_{n=1}^\infty$ in $\Mt$ such that $\cap_{n=1}^\infty K_n$ is
compact;
\item\label{it:Bima:b} a (possibly unbounded) linear operator
\[
T\colon \Span(W) \to \bigcap_{n=1}^\infty \F_p(K_n;\Mt)
\]
with $\norm{\mu-T(\mu)}\leq \varepsilon$ for every $\mu\in W$; and
\item\label{it:Bima:c} bounded linear operators $T_n\colon\F_p(\Mt)\to \F_p(K_n;\Mt)$, $n\in\Nat$, such that $\lim_n T_n=T$ uniformly on $W$.
\end{enumerate}
Moreover, if $\Mt$ is discrete, then the sequence $(K_n)_{n=1}^\infty$ may be chosen to be constant. That is, there exists a finite set $K\subset \Mt$ such that $K_n=K$ for every $n\in\Nat$.
\end{theorem}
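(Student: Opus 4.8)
The plan is to trap almost all of the mass of each $\mu\in W$ inside shrinking neighborhoods of finite sets supplied by Kalton's property, to realize the localization by the weight operators $S_h$ of \Cref{lem:multOp}, and then to let the neighborhoods shrink to a compact set. Fix $\varepsilon>0$ and a sequence $\delta_n\downarrow 0$. For each $n$ I would invoke Kalton's property to obtain a finite set $E_n\subset\Mt$ (containing the distinguished point) together with an error $\varepsilon_n>0$ for which $W\subset\F_p([E_n]_{\delta_n};\Mt)+\varepsilon_n B_{\F_p(\Mt)}$, performing the inductive choice so that the closed sets $K_n\coloneqq[E_n]_{2\delta_n}$ are nested, $K_1\supset K_2\supset\cdots$. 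Since $\delta_n\to 0$ and each $E_n$ is finite, $K_n$ is covered by finitely many balls of radius $2\delta_n$; hence $\bigcap_n K_n$ is totally bounded and, being closed in the complete space $\Mt$, compact. This yields \ref{it:Bima:a}.

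For the operators, pick for each $n$ a Lipschitz function $h_n\colon\Mt\to[0,1]$ with $h_n\equiv 1$ on $[E_n]_{\delta_n}$, $\supp(h_n)\subset K_n$, and $\norm{h_n}_{\Lip}\le\delta_n^{-1}$. Applying \Cref{lem:multOp} on the bounded set $K_n$ and extending by zero, I get a bounded operator $T_n\coloneqq S_{h_n}\colon\F_p(\Mt)\to\F_p(K_n;\Mt)$ which is the identity on $\F_p([E_n]_{\delta_n};\Mt)$. Writing $\mu=\nu_n+e_n$ with $\nu_n\in\F_p([E_n]_{\delta_n};\Mt)$ and $\norm{e_n}\le\varepsilon_n$, we have $\mu-T_n\mu=e_n-S_{h_n}e_n$, so $\norm{\mu-T_n\mu}^p\le(1+\norm{S_{h_n}}^p)\varepsilon_n^p$. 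I would then choose $\varepsilon_n$ last — after $E_n$, $\delta_n$, and hence $\norm{S_{h_n}}$ are already fixed — small enough that $(1+\norm{S_{h_n}}^p)\varepsilon_n^p\le\varepsilon^p$ and that $(T_n\mu)_n$ is Cauchy uniformly for $\mu\in W$ (via a telescoping estimate exploiting the $p$-additivity of the norm on disjointly supported increments). Setting $T\mu\coloneqq\lim_n T_n\mu$ and extending linearly, the linearity of each $T_n$ makes $T$ a well-defined linear map on $\Span(W)$ regardless of any norm bound, which is \ref{it:Bima:b}, while $\lim_n T_n=T$ uniformly on $W$ is \ref{it:Bima:c}. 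The nesting $K_m\subset K_n$ for $m\ge n$ gives $T_m\mu\in\F_p(K_n;\Mt)$ for all $m\ge n$, so $T\mu\in\F_p(K_n;\Mt)$ by closedness and thus $T\mu\in\bigcap_n\F_p(K_n;\Mt)$; the bound $\norm{\mu-T\mu}\le\varepsilon$ passes to the limit.

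When $\Mt$ is discrete every finite $E\subset\Mt$ has $d(E,\Mt\setminus E)>0$, so the characteristic function $\mathbf{1}_E$ is Lipschitz and $[E]_\delta=E$ for all small $\delta$. Hence a single application of Kalton's property produces a finite $E$ with $W\subset\F_p(E;\Mt)+\varepsilon' B_{\F_p(\Mt)}$, and $P\coloneqq S_{\mathbf{1}_E}$ is a bounded projection of $\F_p(\Mt)$ onto $\F_p(E;\Mt)$ acting as the identity there; choosing $\varepsilon'$ small gives $\norm{\mu-P\mu}\le\varepsilon$ on $W$, and taking $K_n=E$, $T_n=T=P$ constant establishes the ``moreover''. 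I expect the main obstacle to lie entirely in the non-discrete case: one must keep the neighborhoods $K_n$ simultaneously nested (so the limit is supported in the compact intersection) and mass-capturing (so $T_n$ fixes $\nu_n$), while the norms $\norm{S_{h_n}}$ blow up like $\delta_n^{-1}$ — and, for unbounded $\Mt$, also through the radius of $K_n$ — as $\delta_n\to 0$. Controlling this blow-up against the Kalton errors $\varepsilon_n$, which is exactly why $\varepsilon_n$ is selected after the operators are in place, and squeezing out uniform Cauchyness of $(T_n\mu)_n$ on $W$, is the delicate core of the argument.
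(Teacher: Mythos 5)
There is a genuine gap in the non-discrete case, and it sits exactly where you flagged ``the delicate core''. Your construction applies Kalton's property to $W$ itself at every stage $n$, obtaining finite sets $E_n$ and setting $T_n=S_{h_n}$ with $h_n\equiv 1$ on $[E_n]_{\delta_n}$ and $\supp(h_n)\subset K_n=[E_n]_{2\delta_n}$, and you assert that the $K_n$ can be ``inductively'' arranged to be nested. They cannot: the finite sets delivered by Kalton's property for smaller tolerances have no reason to stay inside $[E_{n-1}]_{2\delta_{n-1}}$, and if you force nesting by intersecting with the previous $K_{n-1}$ you lose the property that $h_n\equiv 1$ on $[E_n]_{\delta_n}$, which is what makes $T_n$ fix the mass-carrying part $\nu_n$ of $\mu$. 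Worse, if your scheme did go through it would prove far too much: you would obtain $\norm{\mu-T_n(\mu)}\to 0$ uniformly on $W$ with $T_n(\mu)\in\F_p(K_n;\Mt)$ and $K_n$ nested, hence, by closedness of each $\F_p(K_N;\Mt)$, every $\mu\in W$ would itself lie in $\bigcap_n\F_p(K_n;\Mt)$. That is precisely the strengthening the authors could not obtain and which they tie to the open Question~\ref{qt:intersection}. The paper avoids this with a genuinely recursive construction: $T_n=S_n\circ T_{n-1}$, Kalton's property at stage $n$ is applied not to $W$ but to the pushed-forward set $W_{n-1}=T_{n-1}(W)$ \emph{relative to} $K_{n-1}$ (this requires Lemma~\ref{claim:subsetKalton}, showing Kalton's property survives the weight operators), the new $E_n$ is chosen inside $K_{n-1}$ so that $K_n=[E_n]_{2\delta_n}\cap K_{n-1}$ is nested by fiat, and the errors only telescope to $\norm{\mu-T(\mu)}^p\le\sum_j\varepsilon_j^p\le\varepsilon^p$ rather than forcing $T(\mu)=\mu$.

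Two further points. First, your quantifier order ``choose $\varepsilon_n$ after $E_n$ is fixed'' is circular as written, since $E_n$ is the output of Kalton's property for the input $\varepsilon_n$; it is repairable because $\norm{S_{h_n}}$ is controlled by $\delta_n$ and $\diam(\Mt)$ alone --- but only after the reduction to bounded $\Mt$, which you also need and do not perform; the paper fixes $\delta_n$ as an explicit function of $\varepsilon_n$ in advance for exactly this reason. Second, your discrete shortcut does not close: in a discrete (even uniformly discrete, complete) metric space the set $[E]_\delta\cap\Mt$ need not be finite --- think of infinitely many points at distance exactly $1$ from a point of $E$ --- so $P=S_{\mathbf{1}_E}$ need not act as the identity on the set $\F_p([E]_\delta;\Mt)$ that actually captures $W$, and $\norm{P}$ depends on the isolation radii of $E$, which depend on $E$, which depends on $\varepsilon'$. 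The paper instead runs the general construction and applies Kuratowski's intersection theorem to the closed sets $K_n\setminus K$ (closed because every subset of a discrete space is closed) to conclude that $K_n=K$ for some $n$, with $K$ compact and discrete, hence finite.
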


\begin{proof}
First of all, we may assume without loss of generality that $\Mt$ is a complete, bounded metric space. This follows essentially from the ``bounded reduction'' principle developed in \cite{AACD2022}. Let us give more details. There exists a bounded metric space $\Nt$, together with a surjective homeomorphism $B\colon\Mt\to\Nt$ and a surjective linear isomorphism $I\colon \F_p(\Mt)\to \F_p(\Nt)$, which satisfy the following additional properties:
\begin{itemize}
\item $B$ is Lipschitz.
\item For any $K\subset \Mt$ we have $I(\F_p(K;\Mt))\subset \F_p(B(K);\Nt)$.
\item for any $L\subset \Nt$ we have $I^{-1}(\F_p(L;\Nt))\subset \F_p(B^{-1}(L);\Mt)$.
\end{itemize}

We infer that $I(W)\subset \F_p(\Nt)$ inherits Kalton's property from $W$. Hence, assuming that Theorem~\ref{thm:KaltonsProperty} holds for bounded metric spaces, for any $\varepsilon>0$ we obtain $(K_n)_{n=1}^\infty$, $(T_n)_{n=1}^\infty$, and $T$ satisfying \ref{it:Bima:a}, \ref{it:Bima:b} and \ref{it:Bima:c} relative to $\Nt$, $I(W)$ and $\varepsilon/\norm{I^{-1}}$. It is routine to check that $B^{-1}(K_n)$, $I^{-1}\circ T_n\circ I$, $n\in\Nat$ and $I^{-1}\circ T\circ I$ witness that Theorem~\ref{thm:KaltonsProperty} holds for the space $\Mt$, including the ``moreover'' part.

Fix $\varepsilon>0$. Put $\delta_0\coloneqq\diam(\Mt)$ and, for every $n\in\Nat$,
\[
\varepsilon_n\coloneqq 2^{-n/p} \varepsilon, \quad \delta_n\coloneqq\delta_0\enpar{\enpar {K(p)\varepsilon_n}^{-p}-2}^{-1/p}.
\]

Further, let $W_0\coloneqq W$, $K_0\coloneqq\Mt$, and $S_0\coloneqq T_0$ be the identity operator on $\F_p(\Mt)$. We shall recursively construct for each $n\in\Nat$ a finite set $E_n\subset\Mt$, a closed set $K_n\subset \Mt$, $W_n\subset \F_p(K_n;\Mt)$, and linear operators $S_n\colon \F_p(K_{n-1};\Mt)\to \F_p(K_n;\Mt) $ and $T_n\colon \F_p(\Mt)\to \F_p(K_n;\Mt) $ such that
\begin{enumerate}[label=(\arabic*),widest=a]
\item\label{it:Bima:u} $K_n=[E_n]_{2\delta_n}\cap K_{n-1}$;
\item\label{it:Bima:s} $S_n(\F_p(F;\Mt))\subset \F_p(F\cap K_n;\Mt)$ for every $F\subset K_{n-1}$;
\item\label{it:Bima:t} $S_n$ is the identity map on $\F_p([E_n]_{\delta_n}\cap K_{n-1};\Mt)$;
\item\label{it:Bima:p} $\norm{\mu-S_n(\mu)}\leq \varepsilon_n$ for every $\mu\in W_{n-1}$;
\item\label{it:Bima:r} $T_n=S_n \circ T_{n-1}$; and
\item\label{it:Bima:q} $W_n=T_{n}(W)$.
\end{enumerate}

Assume that for fixed $n\in \Nat$, the $(n-1)$th step has been completed.
From \Cref{claim:subsetKalton}, we deduce that $W_{n-1}$ has Kalton's property relative to $K_{n-1}$, and therefore we find a finite set $E_n\subset K_{n-1}$ with
\[
W_{n-1}\subset \F_p([E_n]_{\delta_n}\cap K_{n-1};\Mt) + \varepsilon_n^2B_{\F_p(K_{n-1};\Mt)}.
\]

Define $K_n$ by \ref{it:Bima:u}. We use the Whitney--McShane extension theorem to pick a $({1}/{\delta_n})$-Lipchitz function $h_n\colon K_{n-1}\to [0,1]$ satisfying
\begin{enumerate}[label=(\alph*),widest=a]
\item\label{it:Bima:e} $h_n(x)=1$ for all $x\in [E_n]_{\delta_n}\cap K_{n-1}$; and
\item\label{it:Bima:d} $h_n=0$ on $K_{n-1}\setminus K_n$.
\end{enumerate}

Now, pick the bounded linear operator $S_n\coloneqq S_{h_n}$ from Lemma~\ref{lem:multOp}, which satisfies \ref{it:Bima:s} by \ref{it:Bima:d} and also \ref{it:Bima:t} by \ref{it:Bima:e}. In particular,
\[
S_n\enpar{\F_p(K_{n-1};\Mt)} \subset \F_p(K_n;\Mt).
\]

To prove \ref{it:Bima:p}, let $\mu \in W_{n-1}$ be given. We pick $\lambda\in \F_p([E_n]_{\delta_n}\cap K_{n-1};\Mt)$ with $\norm{\mu-\lambda}<\varepsilon_n^2$. It follows that
\begin{align*}
\norm{\mu-S_n(\mu)}^p & \leq \varepsilon_n^{2p}\enpar{1 + \norm{S_n}^p} + \norm{\lambda - S_n(\lambda)} = \varepsilon_n^{2p}\enpar{1 + \norm{S_n}^p}\\
& \leq \enpar{\varepsilon_n^{2}K(p)}^p\enpar{2+\enpar{\frac{\delta_0}{\delta_n}}^p}\leq \varepsilon_n^p.
\end{align*}

Finally, we define $T_n$ by \ref{it:Bima:r}, and $W_n$ by \ref{it:Bima:q}.

Let $\VV$ be the vector space consisting of all $\mu\in\F_p(\Mt)$ such that $(T_n(\mu))_{n=1}^\infty$ converges. We set
\[
T\colon \VV \to \bigcap_{n=1}^\infty \F_p(K_n;\Mt), \quad T(\mu)=\lim_{n\to\infty} T_n(\mu).
\]

By construction, $\krt(K_n)\le 2 \delta_n$ for all $n\in\Nat$, and $\lim_n \delta_n=0$. Therefore, the set
\begin{equation}\label{eq:defK}
K\coloneqq\bigcap_{n\in\Nat} K_n
\end{equation}
is compact.

For $\mu\in W$, $n\in\ZZ$ nonnegative, and $k\in\Nat$, we have
\begin{align*}
\norm{T_{n+k} (\mu)- T_n(\mu)}^p
&\le \sum_{j=n+1}^{n+k} \norm{ T_{j}(\mu)-T_{j-1}(\mu) }^p\\
&= \sum_{j=n+1}^{n+k} \norm{ S_j\enpar{T_{j-1}(\mu)}-T_{j-1}(\mu) }^p\\
&\le \sum_{j=n+1}^{n+k} \varepsilon_j^{p} \le 2^{-n} \varepsilon^{p}.
\end{align*}

This implies that $(T_n(\mu))_{n=1}^\infty$ is a Cauchy sequence for every $\mu\in W$. Hence, $W \subset \VV$, and for all $\mu \in W$ and all nonnegative integers $n$, we have $\norm{T(\mu) - T_n(\mu)} \leq 2^{-n} \varepsilon$. This inequality yields $\lim_n T_n=T$ uniformly on $W$ and, choosing $n=0$, $\norm{T (\mu)- \mu} \le \varepsilon$ for all $\mu \in W$.

To conclude the proof, we claim that if $\Mt$ is discrete then $K = K_n$ for some $n\in\Nat$.

Indeed, if this were not the case, then $F_n\coloneqq K_n\setminus K\neq\emptyset$ for all $n\in\Nat$. Since $\lim_n \krt(F_n)=0$ and, by assumption, $F_n$ is closed for all $n\in\Nat$, Kuratowski’s intersection theorem implies that $\bigcap_{n=1}^\infty F_n\neq \emptyset$. This, however, contradicts \eqref{eq:defK}.

Since $K$ is compact, it is finite. Passing to a subsequence, we may assume that $K_n=K$ for all $n\in\Nat$.
\end{proof}

We point out that, as it happens in the Banach space setting, a quasi-Banach space has the approximation property if and only the identity can be uniformly approximated on compact sets by finite-rank operators.

\begin{proof}[Proof of Theorem~\ref{thm:discreteCase}]
By \cite[Proposition 4.1]{AACD21} (see e.g.\@ \cite[comment before Question 6.3]{AACD21}), it suffices to consider the case when $\Mt$ is a complete discrete metric space.

Let $W\subset \F_p(\Mt)$ be compact and take any $\varepsilon>0$. Combining Lemma~\ref{lem:cpct} with Theorem~\ref{thm:KaltonsProperty} yields a finite set $K$, a function $T\colon W\to \F_p(K;\Mt)$ with $\norm{T-\Id_W}_\infty\le \varepsilon/2$, and a bounded linear operator $S\colon \F_p(\Mt)\to \F_p(K;\Mt)$ with $\norm{S|_W-T}_\infty\le \varepsilon/2$. Consequently, $\norm{S|_W- \Id_W}_\infty\le \varepsilon$.

Since $S$ is a finite-rank operator, we are done.
\end{proof}
\section{Open questions}\noindent
In view of \Cref{cor:scatteredProper} we wonder for the behaviour of Lipschitz free $p$-spaces $\F_{p}(\Mt)$ over compact non-scattered metric spaces $\Mt$. In particular, we are interested in the case where $\Mt = [0,1]$.

\begin{question}\label{qt:SchurFp01}
Let $0<p<1$. Does $\F_p([0,1])$ have the Schur $p$-property?
\end{question}

We emphasize the connection of \Cref{qt:SchurFp01} with the problem raised in \cite{AlbiacKalton2009} whether there exist $0<p<1$ and a metric space $\Mt$ such that $\F_p(\Mt)$ contains a complemented subspace isomorphic to an infinite-dimensional Banach space.

In fact, no infinite-dimensional Banach space is known to linearly embed into a Lipschitz free $p$-space for $p<1$. Taking into account that every separable metric space embeds into $c_0$, we wonder if there is any Banach space that linearly embeds into $\F_p(c_0)$.

Note that the answer to \Cref{qt:SchurFp01} is negative for $p=1$. In fact, $\F([0,1])$ is isomorphic to $L_1$, and $L_1$ contains a copy of $\ell_2$. Note also that, given $0<p<1$, $\F_p([0,1],\abs{\cdot}^{1/p})$ is isomorphic to $L_p$ (see \cite{AACD2018}), and $L_p$ is not a $p$-Schur space since it contains $\ell_2$ as well. However, the anti-snowflacking $([0,1],\abs{\cdot}^{1/p})$ is a $p$-metric space that fails to be a metric space. In fact, its metric envelope is null (see \cite{AACD2018}).

The \emph{Banach envelope} of a quasi-Banach space $\XX$ is Banach space $\widehat{\XX}$ together with a linear contraction $J_\XX\colon \XX\to \widehat{\XX}$, called the \emph{envelope map}, satisfying the universal property that for any Banach space $\YY$ and any linear contraction $T\colon \XX\to \YY$, there is a linear contraction $\widehat{T}\colon \widehat{\XX} \to \YY$ such that $\widehat{T}\circ J_\XX=T$. Since the Banach envelope of $\F_p([0,1])$ is $\F_1([0,1])$, \Cref{qt:SchurFp01} connects with the inheritability of Schur properties by envelopes.

\begin{question}\label{qt:SchurEnve}
Let $0<p<1$ and $\XX$ be a quasi-Banach space with the Schur $p$-property. Does $\widehat{\XX}$ have the Schur property?
\end{question}

An important point to make here is that envelope maps may not be one-to-one. In fact, there are quasi-Banach spaces such as $L_p$ for $p<1$ with trivial dual and, hence, trivial Banach envelope. Hence, as a particular case of \Cref{qt:SchurEnve}, we ask about the existence of $p$-Schur spaces whose dual space is null.

It will also be interesting to determine whether the Schur $p$-property and the strong Schur $p$-property are different in the setting of Lipschitz free $p$-spaces.

\begin{question}\label{qt:SchurvsSCh}
Let $0<p\le 1$. Is there a metric space $\Mt$ such that $\F_p(\Mt)$ has the Schur $p$-property but fails to have the strong Schur $p$-property?
\end{question}

Any Lipschitz free $p$-space over a metric space is isomorphic to a Lipschitz free $p$-space over a bounded metric space \cite{AACD2022}, and the techniques from \cite{AACD21} allow to construct a metric space $\Mt$ from a countable family $(\Mt_i)_{i\in \It}$ of bounded metric spaces such that $\F_p(\Mt)$ is isomorphic to $\enpar{\bigoplus_{i\in\It} \F_p(\Mt_i) }_{\ell_p}$.

Hence, \Cref{qt:SchurvsSCh} is equivalent to asking about the existence for each $K\in[1,\infty)$ of a bounded metric space $\Mt$ such that $\F_p(\Mt)$ fails to have the $K$-strong Schur $p$-property. In particular, we wonder if for each $K\in[1,\infty)$ there is a uniformly separated bounded metric space $\Mt$ such $\F_p(\Mt)$ fails to have the $K$-strong Schur $p$-property despite being isomorphic to $\ell_p$ (see \cite[Theorem 4.14]{AACD2018}).

We note that very recently Kalenda (\cite{Kalenda25}) found an example of a uniformly separated bounded metric space $\Mt$ such that $\F(\Mt)$ does not have $1$-strong Schur property, see \cite[Example 8.5]{Kalenda25}. However, the techniques to prove this are using duality techniques, so we do not know whether an analogue for $p<1$ may be proved as well.

In connection with the compact reduction principle discussed in Section~\ref{cpctReduction}, we pose several further related questions. For each of the questions above, a positive answer is known when $p=1$. However, the existing proofs rely on duality techniques. Thus, even in the case $p=1$, a positive answer would require a new proof.

\begin{question}\label{qt:intersection}
Let $p\in(0,1)$, $\Mt$ be a complete metric space, and $(K_n)_{n=1}^\infty$ be a nonincreasing sequence of closed subsets of $\Mt$. Does the following inclusion hold?
\[
\bigcap_{n=1}^\infty \F_p(K_n;\Mt)\subset \F_p\enpar{\bigcap_{n=1}^\infty K_n;\Mt}
\]
\end{question}

\begin{question}\label{qt:firstPartCpctReduction} Let $p\in (0,1)$ and $\Mt$ be a bounded metric space. Suppose that a bounded set $W\subset \F_p(\Mt)$ does not have Kalton's property.

Does $W$ contain a sequence equivalent to the canonical basis of $\ell_p$?
\end{question}

\begin{question}\label{qt:discreteSchur}Let $p\in(0,1)$ and $\Mt$ be a bounded complete discrete metric space. Does $\F_p(\Mt)$ have the Schur $p$-property?
\end{question}

Question~\ref{qt:intersection} asks for an analogue of the Intersection theorem, see \cite[Theorem 2.1]{APPP20}. A positive answer would lead to a strengthening of Theorem~\ref{thm:KaltonsProperty} in the form of the corresponding \cite[Theorem 3.2]{ANPP}.

Since weakly compact sets could not give valuable information on the geometry of non-locally convex spaces, a positive answer to Question~\ref{qt:firstPartCpctReduction} would be a nice substitute for \cite[Proposition 3.3]{ANPP}. In fact, it implies a positive answer to Question~\ref{qt:discreteSchur} along the lines of the proof of \cite[Corollary 2.7]{ANPP}.
\bibliography{schur}
\bibliographystyle{plain}
\end{document}